\theoremstyle{plain}
\newtheorem{thm}{Theorem}[section]
\newtheorem{pro}[thm]{Proposition}
\newtheorem{`thm'}[thm]{``Theorem''}
\newtheorem{cor}[thm]{Corollary}
\newtheorem{lem}[thm]{Lemma}
\newtheorem{dfn-thm}[thm]{Definition-Theorem}
\newtheorem{dfn-pro}[thm]{Definition-Proposition}
\newtheorem*{mainthm}{Main-Theorem}
\theoremstyle{definition}
\newtheorem{prob}[thm]{Problem}
\newtheorem{dfn}[thm]{Definition}
\newtheorem{asm}[thm]{Assumption}
\theoremstyle{remark}
\newtheorem{rmk}[thm]{Remark}
\newtheorem{rmks}[thm]{Remarks}
\newtheorem{ex}[thm]{Example}
\newcommand{\bb}[1]{\mathbb{#1}}
\newcommand{\fra}[1]{\mathfrak{#1}}
\newcommand{\ca}[1]{\mathcal{#1}}
\newcommand{\scr}[1]{\mathscr{#1}}
\newcommand{\Cl}{\mathit{Cliff}}
\newcommand{\inpr}[3]{\left(#1\middle|#2\right)_{#3}}
\newcommand{\pos}{{\rm pos}}
\newcommand{\td}{{\rm td}}
\newcommand{\ind}{{\rm ind}}
\newcommand{\id}{{\rm id}}
\newcommand{\PD}{{\rm PD}}
\newcommand{\loc}{{\rm loc}}
\newcommand{\fgt}{{\rm fgt}}
\newcommand{\New}{{\rm New}}
\newcommand{\Hom}{{\rm Hom}}
\newcommand{\End}{{\rm End}}
\newcommand{\midd}{\,\middle|\,}
\newcommand{\rank}{{\rm rank}}
\newcommand{\bra}[1]{\left(#1\right)}
\newcommand{\bbra}[1]{\left\{#1\right\}}
\newcommand{\bbbra}[1]{\left[#1\right]}
\newcommand{\ud}[1]{\underline{#1}}
\newcommand{\ev}{{\rm ev}}
\newcommand{\LC}{{\rm LC}}
\newcommand{\vep}{\varepsilon}
\newcommand{\grotimes}{\widehat{\otimes}}
\newcommand{\groplus}{\widehat{\oplus}}
\begin{document}
\title{An Index Theorem for Loop Spaces}
\author{Doman Takata \\
Niigata University}

\date{\today}

\maketitle
\begin{abstract}
We formulate and prove an index theorem for loop spaces of compact manifolds in the framework of $KK$-theory.
It is a strong candidate for the noncommutative geometrical definition (or the analytic counterpart)
of the Witten genus.
In order to find out an ``appropriate form'' of the index theorem to formulate a loop space version, we formulate and prove an equivariant index theorem for non-compact manifolds equipped with $S^1$-actions with compact fixed-point sets.
In order to formulate it, we use a ring of formal power series.\\\\
{\it Mathematics Subject Classification $(2010)$.} 
19K56 (Index theory); 
19K35 (Kasparov theory (KK-theory)), 
19L47 (Equivariant K-theory),
58B20 (Riemannian, Finsler and other geometric structures on infinite-dimensional manifolds),
58B34 (Noncommutative geometry).\\
{\it Key words}: index theorem, fixed-point formula, $KK$-theory, $\ca{R}KK$-theory, representable $K$-theory, loop space, $C^*$-algebras of Hilbert manifolds, Witten genus.
\end{abstract}

\setcounter{tocdepth}{2}
\tableofcontents

\section{Introduction}

\subsection*{Main theorem}

The Atiyah-Singer index theorem states that the analytic index of the Dirac operator on a closed manifold is determined by topological data \cite{ASi1,ASi2}.
When a manifold $M$ is equipped with a group action of a compact Lie group $G$, the index is an element of the representation ring of $G$ (called the equivariant index), and it is determined by data on the fixed-point set $M^G$ \cite{ASe1}.
Roughly speaking,
$$\text{analytic equivariant index}=\int_{M^G}(\text{topological data}).$$

Although the left hand side makes sense only for closed manifolds, the right hand side makes sense in much more general situations.
Witten defined an ``index of the Dirac operator on the free loop space on a compact manifold'' in \cite{Wit}.
This ``index'' is now called the {\bf Witten genus}.

The aim of the present paper is to formulate and prove an index theorem for loop spaces in the framework of $KK$-theory.
The main theorem is the following.
The notions appearing there will be explained soon.

\begin{mainthm}
For a compact $K$-oriented manifold $M$, we can define a homomorphism
$$\widetilde{\ind_{S^1}^\pos}:KK_{S^1}(\ca{A}(LM),\ca{S}_\vep)\to KK_{S^1}(\ca{S}_\vep,\ca{S}_\vep)_\pos$$
and it has a fixed-point formula.
\end{mainthm}

Let us briefly explain several symbols and notations: $LM$ is the free loop space of $M$; $\ca{A}(LM)$ is a $C^*$-algebra substituting for the ``function algebra of $LM$''; $KK_{S^1}(A,B)$ is the $S^1$-equivariant $KK$-theory, which coincides with the the set of homotopy classes of Dirac operators when $A$ is the function algebra of a closed manifold and $B=\bb{C}$; $\ca{S}_\vep$ is  a certain $C^*$-algebra defined in Definition \ref{Cstar alg S};
$KK_{S^1}(\ca{S}_\vep,\ca{S}_\vep)_\pos$ is a kind of representation ring of $S^1$ which is inspired by the concept of positive energy representations of loop groups \cite{PS}.
In short, we will formulate and prove an {\bf index theorem for loop spaces!}

Although we have not verified that the above homomorphism realizes the Witten genus, the homomorphism is probably quite useful to realize it.

\subsection*{Equivariant index theorem and $KK$-theory}

Let us move on to expositions of previous researches related to the present paper.
We begin with a $KK$-theoretical formulation of the equivariant index theorem.
One can conceptually understand equivariant index theory using $KK$-theory and $\ca{R}KK$-theory \cite{Kas88,Kas15,Bla}.
Roughly speaking, analytic data and topological data corresponds to each other, not only at the level of index, but also at the level of groups in which these data live.



$KK$-theory is a bivariant functor from the category of $C^*$-algebras to the category of abelian groups.
For a pair of $C^*$-algebras $(A,B)$, the $KK$-group is denoted by $KK(A,B)$, and it is contravariant in $A$ and covariant in $B$.
It has an equivariant version. 
For special cases, $KK$-theory has a topological interpretation. If $X$ is a locally compact Hausdorff space, $KK(\bb{C},C(X))$ is isomorphic to Atiyah's $K$-group.


%

$\ca{R}KK$-theory is a joint generalization of $KK$-theory and Segal's $RK$-theory \cite{Kas88,Kas15}.
For a locally compact Hausdorff space $X$, a $C^*$-algebra $A$ is said to be a {$C_0(X)$-algebra} if it is equipped with a $*$-homomorphism $C_0(X)\to \scr{Z}(\ca{M}(A))$, where $\ca{M}(A)$ is the multiplier algebra of $A$ and $\scr{Z}(\ca{M}(A))$ is its center.
For a locally compact Hausdorff space $X$ and a pair of $C_0(X)$-algebras $(A,B)$, we can define an abelian group $\ca{R}KK(X;A,B)$.
For example, $\ca{R}KK(X;C_0(X),C_0(X))$ is isomorphic to Segal's representable $K$-group of $X$, $RK^0(X)$.
More generally, $\ca{R}KK$-theory admits the $K$-theory version of the ``compact vertical cohomology''.
For a vector bundle $E$ over $X$, it is given by
$K_{cv}^0(E)=\ca{R}KK(X;C_0(X),C_0(E))$.
In this sense, $\ca{R}KK$-theory is ``more topological'' than $KK$-theory.
It also has an equivariant version.

$KK$-theory and $\ca{R}KK$-theory provide a nice framework to deal with index theory.
When $X$ is a complete Riemannian manifold equipped with an isometric action of a locally compact group $G$, a $G$-equivariant Dirac operator acting on a Clifford bundle $W$ over $X$ determines an element $[D]\in KK_G(C_0(X),\bb{C})$.
We call it the {\bf index element of $D$}.
Moreover, a fiberwise linear map $\sigma_D:\varpi^*W\to \varpi^*W$ over $TX$ is defined, where $\varpi:TX\to X$ is the natural projection.
Since $\sigma_D$ is invertible outside the zero section, it determines an element $[\sigma_D]\in \ca{R}KK_G(X;C_0(X),C_0(TX))$.
We call it the {\bf symbol element of $D$}.
By Theorem 4.1 and 4.3 in \cite{Kas15}, $KK_G(C_0(X),\bb{C})$ and $\ca{R}KK_G(X;C_0(X),C_0(TX))$ are isomorphic, and the index element of $D$ corresponds to the symbol element of $D$ under this correspondence.
This correspondence is called the {\bf $K$-theoretical Poincar\'e duality}.

Using this framework, Hochs and Wang generalized the fixed-point formula in \cite{HW}. 
For a complete Riemannian manifold $X$ equipped with a torus action so that the fixed-point set is compact, they defined an analytic index homomorphism
$$\ind_g:KK_G(C_0(X),\bb{C})\to R(G)_g,$$
where $R(G)_g$ is the localization of the representation ring of $G$ at $g$ in the algebraic sense.
Then, they proved the fixed-point formula.
When $X$ is compact, the index equals to the Atiyah-Segal-Singer's index.
Our construction is very much inspired by this result.
We will review it in detail in order to compare it with our index in Section \ref{Comparison with the Hochs-Wang's index}.

There are other $KK$-theoretical formulations of equivariant index theory: for example \cite{LRS,HS, Kas15} and related papers.

\subsection*{Infinite-dimensional manifolds}

Although noncommutative geometry is a powerful tool to generalize index theory, it has a weak point.
In order to translate something topological into the language of $C^*$-algebras, we use the Gelfand-Naimark representation theorem: The functor ``taking the algebra consisting of continuous functions vanishing at infinity'' is a contravariant category equivalence between the category of locally compact Hausdorff spaces and the category of commutative $C^*$-algebras.
This result means that a function algebra in the ordinary sense of a non-locally compact space, for example an infinite-dimensional manifold, is the function algebra of a {\it different} locally compact Hausdorff space.
For example, a continuous function vanishing at infinity of an infinite-dimensional Hilbert space is zero.
Therefore, if we want to study an infinite-dimensional manifold using noncommutative geometry, at least, {\it a ``$C^*$-algebra substituting for a function algebra of a non-locally compact space'' must be noncommutative.}

Higson, Kasparov and Trout defined such a $C^*$-algebra for an infinite-dimensional Hilbert space \cite{HKT}.
It is an infinite-dimensional analogue of the ``graded suspension of the Clifford algebra-valued function algebra''.
We call it the ``$C^*$-algebra of a Hilbert space''.
It was used to study Baum-Connes conjecture for a-T-menable groups in \cite{HK}.
It was generalized to Hilbert bundles and it was used to formulate a Thom isomorphism for Hilbert bundles in \cite{Tro}.
We will give an alternative definition of the Thom homomorphism in the present paper.
Moreover, the construction was generalized to general Hilbert manifolds in \cite{DT}.
However, in the present paper, we will use an alternative generalization explained in the next paragraph.

The original construction of the $C^*$-algebra of a Hilbert space $H$ is the inductive limit of the graded suspension of the Clifford algebra-valued function algebra of finite-dimensional subspaces of $H$.
More geometrical definition of such $C^*$-algebras was given in \cite{GWY}.
Gong, Wu and Yu defined a ``$C^*$-algebra of a Hilbert-Hadamard space'' using the exponential mapping, where a Hilbert-Hadamard space is a complete geodesic CAT(0) metric space all of whose tangent cones are isometrically embeddable into Hilbert spaces.
Then, Yu generalized it to Hilbert manifolds whose all injectivity radii are bounded below in \cite{Yu}.
This construction is one of the leading actors in the present paper.

Regarding index theory of infinite-dimensional manifolds, we need to refer to Hamiltonian loop group spaces.
This concept was introduced in \cite{MW}.
For example, the moduli space of flat connections on a Riemann surface with boundary is a Hamiltonian loop group space.
There is a one-to-one correspondence between Hamiltonian loop group spaces and quasi-Hamiltonian spaces \cite{AMM}.
In \cite{LMS,LS,Song}, several topics related to index theory are studied.
In particular, Song constructed an ``analytic index for a Hamiltonian loop group space'' by using a Hilbert bundle over the corresponding quasi-Hamiltonian space in \cite{Song}.
Inspired by it, we studied index theory of Hamiltonian loop group space for the circle group in \cite{T1,T2,Thesis,T4,T5}.
A strong point of our result compared to others is that the construction is $KK$-theoretical.
These studies play an important role in the present paper as we explain in the next paragraph.

\subsection*{Infinite-dimensional $K$-theoretical Poincar\'e duality}

As we have pointed out, $\ca{R}KK$-theory has a topological flavor. Thus, it is possible to generalize 
$\ca{R}KK$-theory to non-locally setting by reformulating the theory by using fields of $C^*$-algebras, Hilbert modules, homomorphisms and operators instead of single objects  \cite{T5}.
Detailed properties will be studied in \cite{NT}.

By using non-locally compact equivariant $\ca{R}KK$-theory and Yu's $C^*$-algebras of Hilbert manifolds, we formulated an infinite-dimensional version of the $K$-theoretical Poincar\'e duality homomorphism in \cite{T5}.
The Bott periodicity map in \cite{HKT} was also important.
The infinite-dimensional version of the $K$-theoretical Poincar\'e duality homomorphism plays a central role in the proof of the fixed-point formula for loop spaces.

\subsection*{Structure of paper}

In section 2, we will review several necessary operations on $\ca{R}KK$-theory, and we will prepare several basic $KK$-elements and $\ca{R}KK$-elements.
We will define the Bott element, the Dirac element, the Thom element, its inverse (fiberwise Dirac element) and the local Bott element. 
We will also give a $KK$-equivalence between $C_0(X)$ and $C_0(X,\Cl_+(TX))$ for $K$-orientable $X$, where $\Cl_+(TX)$ is the Clifford algebra bundle of $TX$.
The Bott element and the Dirac element for a Euclidean space are essential.
This is because the Thom element, its inverse and the local Bott element are in some sense family versions of the Bott elements or the Dirac elements.
We will also briefly review non-locally compact groupoid equivariant $KK$-theory. For detailed expositions, see \cite{T5,NT}.

In Section 3, we will construct an $S^1$-equivariant index for non-compact manifolds equipped with isometric $S^1$-actions with compact fixed-point sets.
We will call it the {\bf localized index}.
We will use a kind of the ring of formal power series instead of localization of commutative ring which was used in \cite{ASe1,HW}.
Then, we will deduce the cohomological formula for this index by using the $K$-theoretical Poincar\'e duality. 
We will rewrite it for $K$-oriented manifolds.
In this case, the localized index of a Dirac operator $D$ can be computed by the integration of a characteristic class of $D$ over the fixed-point set.
Then, in order to generalize it to loop spaces, we will reformulate it for $K$-oriented manifolds using only appropriate $C^*$-algebras.
We will also compare our index and \cite{HW}'s index.

In Section 4, we will construct a loop space version of the localized index and we will prove the fixed-point formula, by constructing a loop space version of each step of the construction of the localized index.
This section contains a functorial study on $C^*$-algebras of Hilbert manifolds.
In particular, we will give an alternative infinite-rank Thom homomorphism for some special cases.
We will show several remained problems.

In Appendix, we will prove that when we apply the construction of the localized index for a closed manifold, we obtain the classical index.
In this sense, our localized index is appropriate.

\subsection*{Table of notations}

\begin{itemize}
\setlength{\parskip}{0cm} 
  \setlength{\itemsep}{0cm} 
%

\item All irreducible representations of $S^1$ are $1$-dimensional and classified by weight.
The weight $k$-representation space is denoted by $\bb{C}_k$.
The corresponding element in $R(S^1)$ is denoted by $q^k$.


\item For a $\bb{Z}_2$-graded vector space $V=V_0\widehat{\oplus}V_1$, we denote the grading by $\partial$ and the graded homomorphism by $\epsilon$, that is to say, for $v\in V_i$ ($i=0$ or  $1$), $\partial v:=i$ and $\epsilon (v)=(-1)^iv=
(-1)^{\partial v}v$.
An element of $V_0\cup V_1$ is said to be homogeneous.
\item The symbol $\grotimes$ means the graded tensor product of Hilbert spaces, $C^*$-algebras or Hilbert modules.
The multiplication on the graded tensor product of graded $C^*$-algebras $A\grotimes B$ is defined by $(a_1\grotimes b_1)\cdot (a_2\grotimes b_2)
:=(-1)^{\partial(a_2)\partial(b_1)}(a_1a_2)\grotimes (b_1b_2)$ for homogeneous $b_1$ and $a_2$.
We often use this symbol even if one of the gradings is trivial.


\item For a $C^*$-algebra $B$ and a Hilbert $B$-module $E$, the set of adjointable bounded operators is denoted  by $\bb{L}_B(E)$ or simply by $\bb{L}(E)$. 



\item A $*$-homomorphism from a $C^*$-algebra to $\bb{L}_B(E)$ for a $C^*$-algebra $B$ and a Hilbert $B$-module $E$, is almost always denoted by $\pi$.
We often use the same symbol to denote other $*$-homomorphisms of this type.


\item For a Euclidean space $V$, its Clifford algebras are denoted by $\Cl_+(V)=T(V)/(v^2\sim \|v\|^2)$ and $\Cl_-(V)=T(V)/(v^2\sim -\|v\|^2)$. 
We will use both of them.
This construction also works for Euclidean vector bundle.
For a Euclidean vector bundle $E$ over $X$, we can define a $C^*$-algebra bundle $\Cl_\pm(E)$ over $X$ in an obvious way.

\item In the present paper, a ``Spinor of $V$'' means a $\bb{Z}_2$-graded irreducible representation of $\Cl_-(V)$.
More concretely, a Spinor of $V$ is a Hermitian vector space $S$ equipped with a liner map $c:V\to \End(S)$ so that $c(v)$ is skew Hermitian and $c(v)^2=-\|v\|^2\id$.
We call an irreducible representation of $\Cl_+(V)$ a dual Spinor.


\item For a locally compact Hausdorff space $X$, $C_0(X)$ is the $C^*$-algebra consisting of $\bb{C}$-valued continuous functions vanishing at infinity. More generally, for a bundle of $C^*$-algebras $A=\{A_x\}_{x\in X}$ over $X$, $C_0(X,A)$ is the $C^*$-algebra consisting continuous sections of $A$ vanishing at infinity.

\item For a topological space $X$ and a Euclidean vector bundle $E$ over $X$, we denote $C_0(X,\Cl_+(E))$ by $Cl_E(X)$. When $X$ is a Riemannian manifold, we denote $Cl_{TX}(X)$ by $Cl_\tau(X)$.


\item The projection of a fiber bundle is almost always denoted by $\varpi$.
We often use the same symbol to denote other fiber bundles.

\end{itemize}

\section{Preliminaries}

\subsection{Basic $KK$-elements and index theorem}

\subsubsection{$\ca{R}KK$-theory}\label{RKK}

In this subsection, we prepare necessary $KK$-elements and $\ca{R}KK$-elements from \cite{Kas15}.

Kasparov's equivariant $\ca{R}KK$-theory is a joint generalization of Kasparov's $KK$-theory and Segal's $RK$-theory \cite{Kas88}.
It is an invariant for the following data: A locally compact space $X$, a locally compact Hausdorff group $G$ acting on $X$, and a pair of $C_0(X)$-$G$-$C^*$-algebras $A$ and $B$. It is denoted by $\ca{R}KK_G(X;A,B)$.
It differs from $KK_G(A,B)$ only in the following additional requirement; if $(E,\pi,F)$ is a $G$-equivariant Kasparov $(A,B)$-module, then for any $f\in C_0(X)$, $a\in A$, $b\in B$ and $e\in E$, one has $\pi(f\cdot a)(eb)=\pi(a)(e(f\cdot b))$.

The tensor product of two $C_0(X)$-$C^*$-algebras $B_1$ and $B_2$ has at least two different $C_0(X)$-$C^*$-algebra structure. If the $C_0(X)$-$C^*$-algebra structure is given by $f\cdot (b_1\otimes b_2)=(f\cdot b_1)\otimes b_2$, we denote it by $\uwave{B_1}\otimes B_2$;
if it is given by $f\cdot (b_1\otimes b_2)=b_1\otimes (f\cdot b_2)$, we denote it by $B_1\otimes \uwave{B_2}$.

A $C_0(X)$-$G$-$C^*$-algebra can be regarded as a ``$G$-equivariant family of $C^*$-algebras'' \cite{Nil,Blan}.
The corresponding family of $C^*$-algebras of a $C_0(X)$-algebra $A$ is denoted by $\{A_x\}_{x\in X}$.
With a similar technique, roughly speaking, an $\ca{R}KK_G(X;A,B)$-cycle can be described as a ``$G$-equivariant upper semi-continuous family of Kasparov $(A_x,B_x)$-modules''.
The corresponding family of Kasparov modules of an $\ca{R}KK_G(X;A,B)$-cycle $(E,\pi,T)$ is denoted by $\{(E_x,\pi_x,T_x)\}_{x\in X}$.

$\ca{R}KK$-theory is contravariant in $X$.
We need only the following functoriality.

\begin{dfn}
Let $X$ be a $G$-space and let $Y$ be an invariant subspace.
The natural inclusion is denoted by $\iota:Y\hookrightarrow X$.
Let $A$ and $B$ be $G$-$C_0(X)$-$C^*$-algebras.
In the family picture of $\ca{R}KK$-theory,
$\iota^*:\ca{R}KK_G(X;A,B)\to \ca{R}KK_G(Y;A|_Y,B|_Y)$ is defined by the restriction of everything:
$$\iota^*:\ca{R}KK_G(X;A,B)\ni
\{(E_x,\pi_x,F_x)\}_{x\in X}
\mapsto
\{(E_x,\pi_x,F_x)\}_{x\in Y}\in \ca{R}KK_G(Y;A|_Y,B|_Y).$$
\end{dfn}
\begin{rmks}
$(1)$ In the single module picture, $\iota^*$ is given by the tensor product with $C_0(Y)$ over $C_0(X)$. Note that $C_0(Y)$ is a $C_0(X)$-algebra by $\iota^*:C_0(X)\to B(Y)$, where $B(Y)$ is the $C^*$-algebra consisting of bounded continuous functions on $Y$. Note that $\iota^*$ does not take values in $C_0(Y)$ unless $\iota$ is proper.

$(2)$ $A|_Y$ is defined by the section algebra $C_0(Y,\{A_y\}_{y\in Y})$.
\end{rmks}
\begin{ex}
If $X$ is a manifold, $C_0(TX)|_Y=C_0(TX|_Y)$.
\end{ex}

\subsubsection{Bott element and Dirac element}\label{B and D}

For a Euclidean space $V$ equipped with an orthogonal linear action $G$, $Cl_\tau(V)$ and $\bb{C}$ are $KK_G$-equivalent by the following Bott element and the Dirac element: In the unbounded picture, they are defined by
$$[b_V]=\bbbra{\bra{Cl_\tau(V),\pi,C}}\in KK_G(\bb{C},Cl_\tau(V)),$$
$$[d_V]=\bbbra{\bra{L^2(V,\Cl_+(V)),\pi,\sum c^*(e_i)\frac{\partial }{\partial x_i}}}\in KK_G(Cl_\tau(V),\bb{C}),$$
where $\{e_i\}$ is an orthonormal base of $V$, $c(e_i)$ is the Clifford multiplication, $c^*(e_i)v:=(-1)^{{\rm deg}(v)}ve_i$, and $C=\sum x_ic(e_i)$ is so called the Clifford operator.
It is possible to consider parallel constructions for the open ball $B$ of radius $\vep$ centered at the origin of $V$.
For this case, it is often more convenient to use 
$$\bra{Cl_\tau(B),\pi,\vep^{-1}C}$$
as the representative of $[b_B]$.
The norm of $\vep^{-1}C$ is $1$ on the boundary of $B$, and hence $1-(\vep^{-1}C)^2$ is $Cl_\tau(B)$-compact.
Note that it is in the bounded picture.

Similarly, $C_0(V)$ and $\Cl_+(V)$ are $KK_G$-equivalent.
This equivalence is also useful.

The Bott periodicity map is realized by a $*$-homomorphism in the following sense.

\begin{dfn}\label{Cstar alg S}
$(1)$ We define a $\bb{Z}_2$-graded $C^*$-algebra $\ca{S}$ as follows: The underlining $C^*$-algebra is $C_0(\bb{R})$ and the $\bb{Z}_2$-grading is given by the homomorphism $\epsilon f(t):=f(-t)$.
It has an unbounded multiplier $X$ defined by $(Xf)(t):=tf(t)$.

$(2)$ Similarly, we define a subalgebra $\ca{S}_\vep=C_0(-\vep,\vep)$ and the {\it bounded} multiplier $X$ by the restriction of $X$.
\end{dfn}

\begin{lem}[{\cite[Section 2.3]{T5}}]
$\ca{S}$ and $\ca{S}_\vep$ are $KK$-equivalent to $\bb{C}^2$.
\end{lem}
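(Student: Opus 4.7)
The plan is to handle this in two stages: first reduce $\ca{S}_\vep$ to $\ca{S}$, and then establish $\ca{S} \sim_{KK} \bb{C}^2$ by exhibiting explicit mutually inverse $KK$-classes.

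For the reduction, any odd orientation-preserving diffeomorphism $\psi \colon \bb{R} \to (-\vep, \vep)$, for instance $\psi(t) = \vep \tanh t$, intertwines the reflection $t \mapsto -t$ on both sides. Its pullback $\psi^{*}\colon \ca{S}_\vep \to \ca{S}$, $f \mapsto f \circ \psi$, is therefore a graded $*$-isomorphism, so it suffices to prove the equivalence for $\ca{S}$ itself.

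For the core equivalence $\ca{S} \sim_{KK} \bb{C}^2$, I would take $\bb{C}^2$ with the swap grading $(a, b) \mapsto (b, a)$, identifying $\bb{C}^2 \cong \Cl_1$, and construct two mutually inverse $KK$-classes. Define $\alpha \in KK(\ca{S}, \bb{C}^2)$ as the class of the graded $*$-homomorphism $f \mapsto (f(1), f(-1))$; grading compatibility follows immediately from $(\epsilon f)(t) = f(-t)$, and multiplicativity and $*$-preservation are pointwise. For the inverse class $\beta \in KK(\bb{C}^2, \ca{S})$, use the Kasparov cycle $(E, \pi, F)$ where $E = \ca{S} \grotimes \bb{C}^2$ is the free graded Hilbert $\ca{S}$-module, $\pi$ is the natural left action of $\bb{C}^2$ on the second factor, and $F = (X/\sqrt{1 + X^2}) \grotimes 1$ with $X \in M(\ca{S})$ the unbounded odd multiplier of Definition \ref{Cstar alg S}. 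The computations $F^2 - 1 = -(1/(1 + X^2)) \grotimes 1 \in \ca{S} \grotimes \bb{C}^2 = \ca{K}_\ca{S}(E)$ and $[F, \pi(\cdot)] = 0$ (graded commutator) together verify that the cycle is a valid Kasparov module.

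It remains to verify the Kasparov products $\alpha \otimes_{\bb{C}^2} \beta = [\id_\ca{S}]$ and $\beta \otimes_\ca{S} \alpha = [\id_{\bb{C}^2}]$. After unwinding the tensor product over the middle algebra, each product cycle should be operator-homotopic to the respective identity cycle via the linear rescaling $F_s = (sX)/\sqrt{1 + (sX)^2}$ together with standard functional-calculus manipulations. The principal obstacle here is precisely this verification: Kasparov products are notoriously delicate to compute directly, so in practice one either passes to the unbounded picture, where the product becomes a manifestly commuting sum of Dirac-type terms, or invokes Kasparov's connection construction. The full details are worked out in \cite[Section~2.3]{T5}.
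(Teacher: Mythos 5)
Your reduction $\ca{S}_\vep \cong \ca{S}$ via an odd diffeomorphism $\psi\colon \bb{R}\to(-\vep,\vep)$ is fine, but the core of the argument breaks down at the very first step, where you equip $\bb{C}^2$ with the swap grading and identify it with $\Cl_1$. That identification is what the statement does \emph{not} intend, and cannot intend: with the swap grading one has $\bb{C}^2 \cong \Cl_1$, and $KK(\bb{C},\Cl_1)=0$ (it is a degree shift, $KK(\bb{C},\Cl_1)\cong K_1(\bb{C})=0$), whereas $KK(\bb{C},\ca{S})\cong\bb{Z}^2$. To see the latter, use the graded extension $0\to\ca{S}_0\to\ca{S}\xrightarrow{\ev_0}\bb{C}\to 0$ where $\ca{S}_0=\{f\in\ca{S}: f(0)=0\}$. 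The ideal $\ca{S}_0$ is graded $*$-isomorphic to $C_0\bigl((0,\infty)\bigr)\grotimes\Cl_1$ (decompose into even and odd parts and restrict to $(0,\infty)$), i.e.\ it is the \emph{suspension} of $\Cl_1$, not the cone, so $\ca{S}_0\sim_{KK}\bb{C}$; the boundary maps in the resulting six-term sequence land in $KK_1(\bb{C},\bb{C})=0$, giving $KK(\bb{C},\ca{S})\cong\bb{Z}^2$. Thus $\ca{S}\sim_{KK}\bb{C}^2$ only when $\bb{C}^2$ carries the \emph{trivial} grading, and your swap-graded target is a genuinely different object that is not $KK$-equivalent to $\ca{S}$.

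Concretely, both of your proposed mutual inverses are zero classes: with the swap grading, $KK(\ca{S},\Cl_1)\cong KK(\bb{C}^2,\Cl_1)=0$ and $KK(\Cl_1,\ca{S})\cong KK(\Cl_1,\bb{C}^2)=0$, so $[\alpha]=0$ and $[\beta]=0$ and the asserted Kasparov product identities $\alpha\otimes\beta=[\id_{\ca{S}}]$, $\beta\otimes\alpha=[\id_{\Cl_1}]$ cannot hold. Note also that with the \emph{trivial} grading on $\bb{C}^2$ your map $f\mapsto(f(1),f(-1))$ is no longer a graded $*$-homomorphism (since $(\epsilon f)(\pm 1)=f(\mp 1)$), so the explicit-homomorphism strategy has to be abandoned; indeed, every graded $*$-homomorphism $\ca{S}\to\bb{C}^2$ (trivially graded) kills the odd part and hence factors through $\ev_0\colon\ca{S}\to\bb{C}$, so none of them can be a $KK$-equivalence. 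The argument one actually needs is the extension $0\to\ca{S}_0\to\ca{S}\to\bb{C}\to 0$ together with the Bott-periodicity identification $\ca{S}_0\cong S\Cl_1\sim_{KK}\bb{C}$ and the vanishing of the connecting maps; the $KK$-equivalence $\ca{S}\sim_{KK}\bb{C}\oplus\bb{C}$ is then built from the evaluation $\ev_0$ on one summand and a Bott-type cycle splitting the extension on the other.
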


\begin{pro}[{\cite{HKT}, \cite[Proposition 2.19]{T5}}]
Let $\beta:\ca{S}\to \ca{S}\grotimes Cl_\tau(V)$ be the $*$-homomorphism defined by
$$\beta(f):=f(X\grotimes 1+1\grotimes C).$$
Then, if $V$ is even-dimensional, $[\beta]=\sigma_{\ca{S}}([b_V])$.
\end{pro}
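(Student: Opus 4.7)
The plan is to realize both $[\beta]$ and $\sigma_{\ca{S}}([b_V])$ as $KK$-classes arising from the common odd self-adjoint regular unbounded multiplier $D:=X\grotimes 1+1\grotimes C$ of $B:=\ca{S}\grotimes Cl_\tau(V)$. The essential analytic input is that $D$ has compact resolvent in $B$: indeed $D^2=X^2\grotimes 1+1\grotimes C^2$ because $X\grotimes 1$ and $1\grotimes C$ anticommute, so $(1+D^2)^{-1}$ is multiplication by $(1+t^2+\|x\|^2)^{-1}$, which lies in $C_0(\bb{R}\times V)\subset B$.

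First I would compute $\sigma_{\ca{S}}([b_V])$ as an external Kasparov product. Represent $[1_{\ca{S}}]\in KK(\ca{S},\ca{S})$ by the unbounded cycle $(\ca{S},\id_{\ca{S}},X)$, which is homotopic to the trivial identity cycle via the scaling $tX$, $t\in[0,1]$; then take the external product with the given unbounded representative $(Cl_\tau(V),\pi,C)$ of $[b_V]$. The result is the unbounded Kasparov $(\ca{S},B)$-cycle
\[
(B,\ \id\grotimes\pi,\ D).
\]
The even-dimensionality of $V$ enters at exactly this step: it ensures that the gradings on $\ca{S}$ and $Cl_\tau(V)$ combine under $\grotimes$ so that $D$ is odd self-adjoint and the external product formula yields a valid Kasparov cycle with no sign correction.

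Next, since by definition $\beta$ is the functional calculus $\phi_D(f)=f(D)$, the class $[\beta]$ is represented by the degenerate cycle $(B,\beta,0)$. The proposition therefore reduces to the identification
\[
\bigl[(B,\beta,0)\bigr]=\bigl[(B,\id\grotimes\pi,D)\bigr]\quad\text{in}\quad KK(\ca{S},B).
\]
This is a general principle for an odd self-adjoint regular multiplier $D$ of a graded $C^*$-algebra $B$ with $(1+D^2)^{-1}\in B$: the degenerate cycle built from $\phi_D$ and the unbounded cycle $(B,\pi,D)$ for a compatible inclusion $\pi:\ca{S}\to \ca{M}(B)$ represent the same $KK$-class. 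I would invoke this principle, available in the form developed in \cite{T5} building on \cite{HKT}, to conclude.

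The main obstacle is precisely the verification of this identification, which requires an explicit homotopy of Kasparov cycles. Care is needed because the graded commutator $[(\id\grotimes\pi)(f),D]$ fails to be bounded for odd $f\in\ca{S}$, so the unbounded-Kasparov axioms must be checked on the dense subalgebra of even Schwartz functions; the homotopy itself is a rotation inside $\ca{S}\grotimes B$ that simultaneously dials the operator from $D$ down to $0$ while promoting the left action from $\id\grotimes\pi$ to $\beta$, and one must verify its norm-continuity and Kasparov axioms throughout. Once that homotopy is in place, the identification of both sides with the $KK$-class of $D$ is formal.
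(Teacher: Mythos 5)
The paper does not prove this proposition in the text; it is stated as a citation to \cite{HKT} and \cite[Proposition~2.19]{T5}, so there is no in-paper argument to compare your proof against. That said, your high-level plan --- realize both classes via the common unbounded operator $D=X\grotimes 1+1\grotimes C$ on $B=\ca{S}\grotimes Cl_\tau(V)$, and then invoke the identification of $[\beta]$ with the unbounded cycle defined by $D$ --- does match the standard HKT viewpoint, and your computation that $D^2=X^2\grotimes 1+1\grotimes C^2$ has compact resolvent is correct. But you have deferred precisely the non-trivial identification $\bigl[(B,\beta,0)\bigr]=\bigl[(B,\id\grotimes\pi,D)\bigr]$ back to \cite{T5} and \cite{HKT}, which is exactly what the paper itself does; as written the argument is not self-contained.

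A few of the surrounding remarks are inaccurate or under-justified. First, $(B,\beta,0)$ is the Kasparov cycle attached to the $*$-homomorphism $\beta$; it is not a \emph{degenerate} cycle in the technical sense (a degenerate cycle represents $0$). Second, the homotopy $t\mapsto tX$ you propose to show $(\ca{S},\id,X)\sim[1_{\ca{S}}]$ does not stay inside valid unbounded cycles at $t=0$ (there $(1+0)^{-1}=1\notin\ca{S}$), and after bounded transform the path $tX(1+t^2X^2)^{-1/2}$ is not norm-continuous at $t=0$; one needs the genuine operator-homotopy formulation of $KK$, not a norm-continuous path. Third, your diagnosis of the unboundedness of $[(\id\grotimes\pi)(f),D]$ for odd $f$ is over-cautious: the graded commutator is $2(Xf)\grotimes 1$, and for any Schwartz $f$ --- odd or even --- the function $t\mapsto tf(t)$ is bounded, so the dense subalgebra of all Schwartz functions suffices; restricting to even functions is unnecessary (and would not give a dense $*$-subalgebra of $\ca{S}$ anyway). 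Fourth, the explanation you give for the even-dimensionality hypothesis is not correct: $D$ is odd and self-adjoint for every $\dim V$, so ``gradings combine with no sign correction'' is not where the hypothesis enters; the constraint reflects instead the structure of $\Cl_+(V)$ and the normalization of $[b_V]$ in odd dimensions, not the well-definedness of the external product cycle.
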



Family versions of $[b_V]$ and $[d_V]$ play a crucial role in Kasparov's index theory.
We will explain two aspects: the Thom isomorphism and the $K$-theoretic Poincar\'e duality.

\subsubsection{Thom isomorphism}

Let $E$ be a $G$-equivariant Euclidean vector bundle over a locally compact Hausdorff space $X$.
For $x\in X$, following the vector space cases, we define $c_x:E_x\to \End(\Cl_+(E_x))$ by $c_x(v)(e):=v\cdot e$ and
$c^*_x:E_x\to \End(\Cl_+(E_x))$ by $c^*_x(v)(e):=(-1)^{\partial e}e\cdot v$.
$c_x$ extends to $\Cl_+(E_x)$.
Then, two $C^*$-algebras $C_0(E)$ and $Cl_E(X):=C_0(X,\Cl_+(E))$ are $KK_G$-equivalent as follows.

\begin{dfn}[{\cite[2.5-2.7]{Kas15}}]\label{Thom for general}
$(1)$ $[\ca{B}_{E}]\in \ca{R}KK_G(X;Cl_E(X),C_0(E))$ is defined by
$$\bbra{\Bigl(
C_0(E_x)\otimes \Cl_+(E_x),c_x,iC_x
\Bigr)}_{x\in X},$$
where for a homogeneous element $e\in C_0(E_x)\otimes \Cl_+(E_x)$, we define $C_x(e)$ by $C_x(e)(v):=c_x^*(v)(e(v))=(-1)^{\partial e}e(v)\cdot v$.


$(2)$ $[d_E]\in \ca{R}KK_G(X;C_0(E),Cl_E(X))$ is defined by
$$\bbra{\bra{
L^2(E_x)\otimes \Cl_+(E_x),\pi,
\sum_{k}\frac{\partial}{\partial \xi_k}\otimes c^*_x(e_k)
}}_{x\in X},$$
where $\pi$ is the multiplication by $C_0(E_x)=C_0(E)|_x$ on $L^2(E_x)$.
\end{dfn}
\begin{rmk}
$[\ca{B}_E]$ is a family of Bott elements, and $[d_E]$ is a family of Dirac elements.
\end{rmk}

The following is an obvious generalization of \cite[Theorem 2.7]{Kas15}.

\begin{thm}\label{Thom isom E}
$[d_E]\grotimes_{Cl_E(X)}[\ca{B}_E]=\bm{1}_{C_0(E)}$ and 
$[\ca{B}_E]\grotimes_{C_0(E)}[d_E]=\bm{1}_{Cl_E(X)}$.
Consequently, $C_0(E)$ and $Cl_E(X)$ are $KK_G$-equivalent.
This equivalence is called the {\bf Thom isomorphism}.
\end{thm}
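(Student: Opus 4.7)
The plan is to reduce both identities to the pointwise Bott--Dirac duality by exploiting the explicit family presentations of $[\ca{B}_E]$ and $[d_E]$ in Definition \ref{Thom for general}. Each of them is displayed as a $G$-equivariant continuous field of Kasparov cycles over $X$, whose fiber at $x$ is a Kasparov cycle between the single $C^*$-algebras $\Cl_+(E_x)$ and $C_0(E_x)$. In the family picture of $\ca{R}KK$-theory described in \S\ref{RKK}, the Kasparov product is compatible with evaluation at fibers: a field of pointwise Kasparov products represents the product of the fields. Therefore both claimed equalities reduce, at each $x \in X$, to
\[
[d_{E_x}] \grotimes_{\Cl_+(E_x)} [\ca{B}_{E_x}] = \bm{1}_{C_0(E_x)}, \qquad [\ca{B}_{E_x}] \grotimes_{C_0(E_x)} [d_{E_x}] = \bm{1}_{\Cl_+(E_x)}.
\]

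These fiberwise statements are a form of Bott periodicity: for a Euclidean $G$-vector space $V$, the cycles $(C_0(V) \otimes \Cl_+(V),\, c,\, iC)$ and $(L^2(V) \otimes \Cl_+(V),\, \pi,\, \sum_k \partial/\partial\xi_k \otimes c^*(e_k))$ implement a $KK_G$-equivalence between $\Cl_+(V)$ and $C_0(V)$. This is precisely the companion to the $KK_G$-equivalence between $\bb{C}$ and $Cl_\tau(V)$ noted in Section \ref{B and D}. It can be verified by writing out the unbounded Kasparov product: up to a compact perturbation, the product operator is a harmonic-oscillator Dirac operator whose square is $-\Delta + \|\xi\|^2$ plus a zero-order Clifford term, and this operator has a one-dimensional kernel spanned by the Gaussian ground state, realizing the identity class.

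The main obstacle I expect is verifying that the pointwise Kasparov product can be performed continuously in $x$, so that the fiberwise products actually assemble into a well-defined $\ca{R}KK$-equivalence over $X$. This will require choosing a smooth Euclidean $G$-invariant connection on $E$ to write down the family Dirac operator, and then checking that the product operator satisfies Kasparov's positivity and connection conditions uniformly in $x$. Since $E$ is a smooth Euclidean $G$-bundle and all ingredients (the Clifford multiplication, the metric on $E_x$, the fiberwise operators $C_x$ and $\sum_k \partial/\partial\xi_k \otimes c^*_x(e_k)$) vary continuously, this amounts to a routine application of Kasparov's machinery in the family picture of $\ca{R}KK$-theory. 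No genuinely new difficulty arises beyond the vector-space case, which is why the statement can fairly be called an obvious generalization of \cite[Theorem 2.7]{Kas15}.
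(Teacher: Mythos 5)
Your proposal is correct and takes essentially the same approach as the paper, which gives no proof of its own but simply declares the theorem an ``obvious generalization'' of \cite[Theorem 2.7]{Kas15}. Your reduction to the fiberwise Bott--Dirac equivalence via a harmonic-oscillator Kasparov product, together with the check that the family product assembles continuously in the $\ca{R}KK$ picture, is precisely the substance of that generalization.
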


\subsubsection{Poincar\'e duality homomorphism}

By $KK$-theoretical Poincar\'e duality homomorphism, a $KK$-element represented by a Dirac operator is transformed into its symbol.
Kasparov realized such a homomorphism by using the Kasparov product with an $\ca{R}KK$-element.

\begin{dfn}[{\cite[Definition 2.3]{Kas15}}]\label{Def PD}
Let $X$ be a complete Riemannian manifold equipped with an isometric action of a locally compact group $G$ (in the present paper, 
we will deal with only $G=S^1$).
For simplicity, in the present paper, we suppose that there is a positive real number $\vep$ such that the injectivity radius at any $x\in X$ is greater than $2\vep$.

$(1)$ Let $U_x$ be the $\vep$-ball centered at $x$ in $X$.
We define $\Theta_x:U_x\to T_xX$ by
$$\Theta_x(y):=\log_x(y)=\text{``}\overrightarrow{xy}\text{''}\in T_xX,$$
where $\log_x:U_x\to T_xX$ is the local inverse of $\exp_x:T_xX\to X$. 
The {\bf local Bott element}\footnote{This $\ca{R}KK$-element is denoted by $[\Theta_{X,2}]$ in \cite{Kas15}.} $[\Theta_{X}]$ is defined by the element of $\ca{R}KK_G(X;C_0(X),C_0(X)\grotimes \uwave{Cl_\tau(X)} )$ represented by the family of Kasparov modules
$$\bbra{
\Bigl(
C_0(U_x)\otimes \Cl_+(T_xX),1_x,\vep^{-1}\Theta_x
\Bigr)}_{x\in X},$$
where $1_x$ denotes the homomorphism given by $z\mapsto z\id$, and $\Theta_x$ denotes the left multiplication by $\Theta_x$.


$(2)$ The homomorphism $\PD_X:KK_G(C_0(X),\bb{C})\to \ca{R}KK_G(X;C_0(X),Cl_\tau(X))$ is defined by 
$\PD_X([D]):=[\Theta_{X}]\grotimes_{C_0(X)}[D]$.\footnote{
Strictly speaking, this Kasparov product stands for
$[\Theta_{X}]\grotimes_{X,C_0(X)\grotimes \uwave{Cl_\tau(X)}}\bbra{\sigma_{X,Cl_\tau(X)}\bra{[D]}}.$}
We call the homomorphism $\PD_X$ the {\bf Poincar\'e duality homomorphism}.
We call the element $\PD_X([D])$ the {\bf Clifford symbol element}.

\end{dfn}

The symbol element of a Dirac operator is defined as follows.

\begin{dfn}
For a $G$-equivariant Dirac operator $D$ on a Clifford module bundle $W$ equipped with a Clifford multiplication $c_W:TX\to \End(W)$ satisfying that $c_W(v)=-\|v\|^2$, {\bf the symbol of $D$} is defined by, in the unbounded picture,
$$[\sigma_D]:=\bbbra{\bra{C_0(TX,\varpi^*W),\pi,ic_W}}\in
\ca{R}KK_G(X;C_0(X),C_0(TX)),$$
where $\varpi:TX\to X$ is the natural projection.
\end{dfn}

$[D]$ and $[\sigma_D]$ correspond to each other under the Poincar\'e duality and the Thom isomorphism.

\begin{thm}[{\cite[Theorem 4.3]{Kas15}}]\label{thm inv ind thm}
$[\sigma_D]=\PD_X([D])\grotimes [\ca{B}_{TX}]$.
\end{thm}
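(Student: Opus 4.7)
The plan is to verify the identity directly by computing the composite Kasparov product $[\Theta_X]\grotimes_{C_0(X)}[D]\grotimes_{Cl_\tau(X)}[\ca{B}_{TX}]$ in the family picture of $\ca{R}KK_G(X;C_0(X),C_0(TX))$, and matching it against the fiber description of $[\sigma_D]$. At each $x\in X$, the fiber of $[\sigma_D]$ is the Kasparov module $(C_0(T_xX)\otimes W_x,\pi,ic_W|_{T_xX})$, so the task is to show that the fibers of the composite agree with this up to Kasparov equivalence.

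First, I would compute $[\Theta_X]\grotimes_{C_0(X)}[D]$ fiber-by-fiber. At $x$, this is the internal product of the local Bott cycle $(C_0(U_x)\otimes \Cl_+(T_xX),1_x,\vep^{-1}\Theta_x)$ with a Dirac-type cycle representing $[D]$. Using the standard unbounded Kasparov product formula together with the exponential chart $\exp_x$ to transport $U_x$ into $T_xX$, the resulting cycle has module $L^2(U_x,W|_{U_x})\grotimes \Cl_+(T_xX)$ with operator essentially $\vep^{-1}\Theta_x\grotimes 1+1\grotimes D_x$, where $D_x$ denotes a local representative of $D$ near $x$. Next, I would apply $[\ca{B}_{TX}]$, whose fiber at $x$ is $(C_0(T_xX)\otimes \Cl_+(T_xX),c_x,iC_x)$. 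Since $[\ca{B}_{TX}]$ is invertible by Theorem \ref{Thom isom E}, multiplication by it cancels the $\Cl_+(T_xX)$-factor and produces a cycle with module $C_0(T_xX)\otimes W_x$; the residual operator combines the Clifford symbol of $D_x$ with $iC_x$, and a standard rotation/deformation homotopy then collapses $iC_x$ and lower-order curvature contributions, leaving the pure Clifford symbol $ic_W$ and recovering the representative of $[\sigma_D]$.

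The main technical obstacle is justifying the unbounded Kasparov product computations uniformly in the family parameter $x$, in particular verifying the $F$-connection condition and the positivity-modulo-compacts criterion within the $\ca{R}KK_G$-framework. A cleaner strategy would be first to establish the identity in the Euclidean model $X=V$ (with linear $G$-action and $D$ the standard Dirac operator $d_V$), where $[\Theta_V]$ coincides up to homotopy with the global Bott element $[b_V]$ and the identity reduces to the Bott-Dirac duality $[b_V]\grotimes [d_V]=\bm{1}_{\bb{C}}$ combined with the Thom isomorphism of Theorem \ref{Thom isom E}; then to extend to general $X$ by locality, using that both $[\Theta_X]$ and the tangent-bundle Thom element are compatible with open embeddings and can be trivialized on exponential charts.
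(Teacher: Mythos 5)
The paper does not prove this statement; it is cited verbatim as Theorem~4.3 of Kasparov's paper \cite{Kas15} and used as a black box. So there is no ``paper's own proof'' to compare against — what you are sketching is a reconstruction of Kasparov's theorem itself, which is a substantial piece of index theory in its own right.

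That said, let me point out two concrete gaps in the sketch. First, the module bookkeeping in the fiberwise reduction is off. You correctly identify the fiber of $\PD_X([D])$ at $x$ as having module $L^2(U_x,W|_{U_x})\grotimes \Cl_+(T_xX)$, but then you claim that tensoring with the fiber of $[\ca{B}_{TX}]$ ``cancels the $\Cl_+(T_xX)$-factor and produces a cycle with module $C_0(T_xX)\otimes W_x$.'' This is not what the internal tensor product gives: one obtains $L^2(U_x,W|_{U_x})\grotimes C_0(T_xX)\grotimes \Cl_+(T_xX)$, which still carries both the $L^2$-factor and a $\Cl_+$-factor. Invertibility of $[\ca{B}_{TX}]$ is a statement about classes, not about cycles — it does not shrink the underlying module. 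The reduction to $C_0(T_xX)\otimes W_x$ is precisely what the harmonic-oscillator (Bott--Dirac) collapse must accomplish, and it is the main analytic content of the argument, not a preliminary simplification to be done before the homotopy. You have the order of operations backwards: the rotation homotopy you invoke at the end must come first, and it is the nontrivial step.

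Second, in the alternative ``Euclidean model then locality'' route, the reduction to the flat case is underestimated. The operator $D$ and hence the class $[D]\in KK_G(C_0(X),\bb{C})$ are global objects; it is a theorem, not an automatic consequence of compatibility with open embeddings, that the product $[\Theta_X]\grotimes_{C_0(X)}[D]$ depends only on the germ of $D$ near each point. Kasparov establishes this localization using pseudodifferential calculus and parametrix arguments, replacing $D$ by a zero-order operator with the same symbol and showing the difference does not affect the product with the locally supported $[\Theta_X]$. Saying that $[\Theta_X]$ and the Thom element ``can be trivialized on exponential charts'' does not by itself justify replacing $X$ by $T_xX$ in the Kasparov product; the crux is precisely to show the product localizes, and that requires real work. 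Your sketch names the right ingredients and the right difficulty (verifying the product conditions uniformly in $x$), but the localization step is the heart of Kasparov's proof and cannot be dispatched by general functoriality remarks.
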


\subsubsection{For $K$-orientable bundles}

A Euclidean vector bundle is said to be {\bf $K$-orientable} if it is of even-rank and of $Spin^c$.
A $K$-orientable Euclidean vector bundle is said to be {\bf $K$-oriented} if a Spinor bundle is fixed.
We can define the concept ``equivariantly $K$-orientable'' and 
``equivariantly $K$-oriented'' in an obvious way.

For a $G$-equivariantly $K$-oriented vector bundle $E$ over $X$, $C_0(X)$, $C_0(E)$ and $Cl_E(X)$ are $KK_G$-equivalent.
Let $S_E$ be a Spinor bundle for $E$, that is to say, $S_E$ is a Hermitian vector bundle equipped with a linear map $\gamma:E\to \End(S_E)$ satisfying that $\gamma(v)^2=-\|v\|^2\id$ and $\gamma(v)^*=-\gamma(v)$.
Then, $S_E$ is automatically equipped with a right Hilbert $\Cl_+(E)$-module bundle structure thanks to $\Cl_-(E)\cong \End(S_E)$.
See \cite[Section 3.2]{T5} for details.
Thus, we can define two $\ca{R}KK_G$-elements
$$[S_E]=(C_0(X,S_E),\pi,0)\in \ca{R}KK_G(X;C_0(X),Cl_E(X)),$$
$$[S_E^*]=(C_0(X,S_E^*),\pi,0)\in \ca{R}KK_G(X;Cl_E(X),C_0(X))$$
and they give $\ca{R}KK_G$-equivalence between $C_0(X)$ and $Cl_E(X)$.

\begin{dfn}\label{KK equiv K ori}
For a $K$-oriented vector bundle $E$ over $X$, we define
$[\ca{B}_E^{Spin^c}]
:=[S_E]\grotimes [\ca{B}_E]
\in \ca{R}KK_G(X;C_0(X),C_0(E))$ and 
$[d_E^{Spin^c}]
:=[d_E]\grotimes [S_E^*]
\in \ca{R}KK_G(X;C_0(E),C_0(X))$.
\end{dfn}
\begin{rmks}
$(1)$ $[\ca{B}_E^{Spin^c}]$ and $[d_E^{Spin^c}]$ are mutually inverse.

$(2)$ A Riemannian manifold $X$ is said to be $K$-orientable if $TX$ is $K$-orientable, and  $X$ is said to be $K$-oriented if $TX$ is $K$-oriented.
For a $K$-orientable manifold $X$, $Cl_\tau(X)$ and $C_0(X)$ are $\ca{R}KK_G$-equivalent.

$(3)$ If a vector bundle $\varpi:E\to X$ has a complex structure, it has a Spinor bundle 
$S_E=\wedge^*{\varpi^*(E)}$ given by the complex exterior algebra bundle.
A Clifford multiplication of $v\in E$ is given by $v\wedge+(v\wedge)^*$.
It will appear in the construction of the index homomorphism of the present paper.
\end{rmks}



%

\subsection{$\ca{R}KK$-theory for non-locally compact groupoids}

Kasparov's equivariant $\ca{R}KK$-theory is a joint generalization of Kasparov's $KK$-theory and Segal's $RK$-theory.
It was extended by Le Gall as groupoid equivariant $KK$-theory in \cite{LG}.

$\ca{R}KK$-theory has a topological flavor.
For example, $\ca{R}KK(X;C_0(X),C_0(X))\cong RK(X)$, where the right hand side is Segal's representable $K$-theory.
Therefore, one can extend this invariant for much more general situations by using the family description of $\ca{R}KK$-theory: {\it We can define non-locally compact groupoid-equivariant $KK$-theory} \cite{T5,NT}.
Let $\scr{G}$ be a possibly non-locally compact groupoid.
We explain the necessary changes to define it from the locally compact cases.
For simplicity, we explain it for an action groupoid $\scr{G}=\ca{X}\rtimes \ca{G}$ for a possibly non-locally compact normal space $\ca{X}$ and a Hausdorff group $\ca{G}$  properly acting on $\ca{X}$.
\begin{itemize}
\item We need to replace $C_0(X)$-$G$-$C^*$-algebras $A$ and $B$ with ``$\ca{G}$-equivariant upper semi-continuous fields of $C^*$-algebras parameterized by $\ca{X}$'', $\scr{A}$ and $\scr{B}$.
\item We need to replace a Hilbert module $E$ with a ``$\ca{G}$-equivariant upper semi-continuous field of Hilbert modules parameterized by $\ca{X}$'', $\scr{E}$.
\item We need to replace a $*$-homomorphism $\pi:A\to \bb{L}_{B}(E)$ with a ``continuous family of $*$-homomorphisms'', $\pi=\{\pi_x:A_x\to \bb{L}_{B_x}(E_x)\}$.
\item We need to replace an adjointable operator $F\in \bb{L}_{B}(E)$ satisfying several conditions with a bounded lower semi-continuous family of adjointable operators $\{F_x\in \bb{L}_{B_x}(E_x)\}$.
\end{itemize}
It is covariant in $\scr{B}$, contravariant in $\scr{A}$ and $(\ca{X},\ca{G})$. It will be extensively studied in \cite{NT}.


\section{An $S^1$-equivariant index theorem for non-compact manifolds}

In this section, we define an $S^1$-equivariant index for non-compact manifolds. The construction is inspired by \cite{HW}.
In the present paper, we use a kind of ring of formal power series instead of localization in the sense of commutative rings.
Then, we will deduce the fixed-point theorem by the $K$-theoretical Poincar\'e duality homomorphism.
We will prove that there are no essential differences between our index and \cite{HW}'s index.
However, our construction has an advantage that we can apply the construction to loop spaces.

\subsection{Construction of $S^1$-equivariant index for non-compact manifolds}\label{Non-compact manifolds}

Let $X$ be a finite-dimensional complete Riemannian manifold equipped with an isometric action of $S^1$. 
We suppose that the fixed-point set $M:=X^{S^1}$ is compact.
We will prove an index theorem for this situation.
In order to construct an analytic index, we prepare several subsets of $X$.


Let $\nu(M )$ be the normal bundle of $M $ in $X$.
Then, we have a normal exponential mapping $\exp^\perp:\nu(M )\to X$, which is diffeomorphic on a neighborhood of the zero section. 
For $\delta>0$, let 
$\nu(M )_\delta:=\bbra{v\in \nu(M ) \midd \|v\|<\delta}$.
Let $U_\delta:=\exp^\perp(\nu(M )_\delta)$. 
Suppose that $\delta$ is less than the injectivity radius of $\exp^\perp$.
The inclusions are denoted by $j:M \hookrightarrow U_\delta$ and $k:U_\delta\hookrightarrow X$. 
The composition of them is denoted by $i=k\circ j:M \hookrightarrow X$.
Since $U_\delta$ is diffeomorphic to a subset of the normal bundle, we have a projection 
$\varpi:U_\delta \to M$.
The setting is summarized as follows:
$$\xymatrix{
M  \ar@{^{(}->}^-{j}[r] \ar@/^28pt/_{i}[rr] & U_\delta \ar@{^{(}->}^-{k}[r] & X. \\
& \nu(M )_\delta \ar@{->>}^-{\varpi}[lu] \ar_-{\exp^{\perp},\cong}[u] &}$$

An index homomorphism $\ind_X$ of $X$ should satisfy that
$\ind_X(i_*[D])=\ind_{M }([D])$ for $[D]\in K_{S^1}(C(M ),\bb{C})$.
Thus, we hope to define 
$$\ind_X:=\text{``}\ind_{M }\circ i_*^{-1}\text{''}.$$
We will prove that $i_*$ is invertible after a certain algebraic operation.

In order to study $i_*$, we focus on the following commutative diagram:
$$\xymatrix{
& KK_{S^1}(C_0(U_\delta),\bb{C}) & \\
KK_{S^1}(C(M ),\bb{C})
 \ar^{j_*}[ru] 
 \ar_{i_*}[rr]   && 
KK_{S^1}(C_0(X),\bb{C}). \ar_{k^*}[lu] }$$
It is deduced from the following commutative diagram:
$$\xymatrix{
& C_0(U_\delta) \ar_{j^*}[ld] \ar^{k_*}[rd] & \\
C(M ) && C(X)\ar^{i^*}[ll] .}$$

If they make sense and they are isomorphic, $(i_*)^{-1}$ should be $(j_*)^{-1}\circ k^*$.

At least, $KK_{S^1}(C_0(U_\delta),\bb{C})$ and $KK_{S^1}(C(M ),\bb{C})$ are isomorphic by Thom isomorphism as follows.


\begin{lem}\label{cplx str of normal}
$\nu(M )$ has a complex structure. 
\end{lem}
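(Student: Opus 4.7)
The plan is to exploit the $S^1$-action on $\nu(M)$ induced by the differential of the given $S^1$-action on $X$. Because $S^1$ fixes $M$ pointwise, it acts linearly on each fiber $\nu(M)_p = (T_pM)^\perp$; because the metric is $S^1$-invariant, this action is orthogonal. Let $A_p$ denote the skew-symmetric infinitesimal generator of this orthogonal representation on $\nu(M)_p$. The complex structure on $\nu(M)$ will be built out of $A_p$ via functional calculus.

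First I would verify that $A_p$ is invertible for every $p \in M$. If $A_p v = 0$ for some $v \in \nu(M)_p$, then $v$ is fixed by the entire $S^1$-action on $\nu(M)_p$, since the image of $S^1$ inside $O(\nu(M)_p)$ is exhausted by the one-parameter subgroup generated by $A_p$. Consequently the geodesic $\gamma(t) := \exp_p(tv)$ satisfies
$$g\cdot\gamma(t) = \exp_p\bra{t\,dg|_p v} = \exp_p(tv) = \gamma(t)$$
for every $g \in S^1$ and sufficiently small $t$, so $\gamma(t) \in X^{S^1} = M$. Thus $v = \gamma'(0) \in T_pM$, which combined with $v \in (T_pM)^\perp$ forces $v = 0$.

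Once $A_p$ is invertible everywhere on $M$, I would set $J_p := A_p(-A_p^2)^{-1/2}$. Since $A_p$ is skew-symmetric, $-A_p^2$ is positive definite, so this makes sense, and a direct computation gives $J_p^2 = -\id$ and $J_p^* = -J_p$; hence $J_p$ is an orthogonal almost complex structure on $\nu(M)_p$. Because $p \mapsto A_p$ depends smoothly on $p$ (being the infinitesimal generator of the smooth fiberwise $S^1$-action on $\nu(M)$) and is invertible everywhere, $p \mapsto J_p$ is a smooth section of $\End(\nu(M))$, which is the desired complex structure on the bundle. Equivalently, one could decompose each fiber $\nu(M)_p$ into its two-dimensional $S^1$-weight subspaces (all weights nonzero by the previous paragraph) and declare rotation by $+\pi/2$ in the direction of the $S^1$-action on each weight subspace to be multiplication by $i$.

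The main obstacle is really just the invertibility of $A_p$, for which the identification of $\nu(M)$ as the orthogonal complement of $TM$ inside $TX|_M$, together with the hypothesis $M = X^{S^1}$, is essential. Once this is in hand, the construction of $J$ via functional calculus (or, alternatively, via the weight decomposition) is entirely routine, and no smoothness-type issues arise because $A$ never develops a zero eigenvalue along $M$.
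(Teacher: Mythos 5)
Your proof is correct and follows essentially the same approach as the paper: the paper also takes the skew-symmetric infinitesimal generator $d = \left.\frac{d}{d\theta}\right|_{\theta=0}\rho(e^{\sqrt{-1}\theta})$ of the fiberwise $S^1$-action and normalizes it via $J := d/|d|$ (which is exactly your $A_p(-A_p^2)^{-1/2}$). The only difference is that the paper simply asserts that $\nu(M)_m$ has no trivial-weight component, whereas you supply the short exponential-map argument justifying this; that is a genuine fill-in of detail but not a different route.
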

\begin{proof}
We can define a skew-symmetric operator $d: \nu(M )\to \nu(M )$ by
$d:=\left.\frac{d}{d\theta}\right|_{\theta=0}\rho(e^{\sqrt{-1}\theta})$. 
Since $\nu(M )_m$ has no component of trivial representation, $J:=d/|d|$ is a complex structure.
\end{proof}

In particular, $\nu(M)$ is $K$-oriented.
Since $U_\delta$ is properly homotopy equivalent to the total space of $\nu(M )$, we have the following.

\begin{cor}
$C_0(U_\delta)$ and $C(M)$ are $KK_{S^1}$-equivalent.
\end{cor}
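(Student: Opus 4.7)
The strategy is to chain together three $KK_{S^1}$-equivalences, all of which follow from machinery already in place: an $S^1$-equivariant diffeomorphism, an $S^1$-equivariant radial rescaling, and the (equivariant) Thom isomorphism for the $K$-oriented bundle $\nu(M)$.

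\textbf{Step 1 (identifying the tubular neighborhood with an open disk bundle).} Since the Riemannian metric on $X$ is $S^1$-invariant and $\delta$ is less than the injectivity radius of $\exp^\perp$, the normal exponential map restricts to an $S^1$-equivariant diffeomorphism $\exp^\perp : \nu(M)_\delta \xrightarrow{\ \cong\ } U_\delta$. Pulling back functions gives an $S^1$-equivariant $*$-isomorphism
\[
C_0(U_\delta) \ \cong\ C_0(\nu(M)_\delta),
\]
and in particular these are $KK_{S^1}$-equivalent.

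\textbf{Step 2 (opening up the disk bundle).} The fiberwise map
\[
\phi : \nu(M)_\delta \longrightarrow \nu(M), \qquad \phi(v) \ := \ \frac{\delta\,v}{\sqrt{\delta^2-\|v\|^2}},
\]
is a diffeomorphism which is $S^1$-equivariant, because the $S^1$-action on $\nu(M)$ is fiberwise orthogonal and hence preserves $\|v\|$. Therefore $C_0(\nu(M)_\delta) \cong C_0(\nu(M))$ as $S^1$-$C^*$-algebras, and the two are $KK_{S^1}$-equivalent.

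\textbf{Step 3 (Thom isomorphism for the $K$-oriented normal bundle).} By Lemma \ref{cplx str of normal}, $\nu(M)$ carries an $S^1$-invariant complex structure, so it is equivariantly $K$-oriented as a Euclidean vector bundle over $M$. Applying Theorem \ref{Thom isom E} to $E=\nu(M)$ with $G=S^1$ yields a $KK_{S^1}$-equivalence $C_0(\nu(M))\simeq Cl_{\nu(M)}(M)$; composing with the $\ca{R}KK_{S^1}$-equivalence $C(M)\simeq Cl_{\nu(M)}(M)$ implemented by $[S_{\nu(M)}]$ and $[S^*_{\nu(M)}]$ from Definition \ref{KK equiv K ori} gives the desired $KK_{S^1}$-equivalence $C_0(\nu(M))\simeq C(M)$.

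Concatenating Steps 1--3 produces the chain
\[
C_0(U_\delta) \ \cong\ C_0(\nu(M)_\delta) \ \cong\ C_0(\nu(M)) \ \simeq_{KK_{S^1}}\ Cl_{\nu(M)}(M) \ \simeq_{KK_{S^1}}\ C(M),
\]
which is the corollary. The only place requiring genuine content (as opposed to a soft identification) is Step 3, so the real work has already been packaged into the equivariant Thom isomorphism recalled in the preliminaries; the main thing to check by hand is that each identification and each representative of the $KK$-equivalence can be chosen $S^1$-equivariantly, which is immediate because the complex structure $J=d/|d|$ produced in Lemma \ref{cplx str of normal} is built directly from the infinitesimal $S^1$-action and is therefore $S^1$-invariant.
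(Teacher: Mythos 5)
Your proof is correct and follows essentially the same route as the paper: both rest on the $K$-orientation of $\nu(M)$ coming from Lemma~\ref{cplx str of normal} and on the equivariant Thom isomorphism (Theorem~\ref{Thom isom E} with the $Spin^c$ refinement of Definition~\ref{KK equiv K ori}), after identifying $U_\delta$ with the total space of $\nu(M)$. The only difference is cosmetic — the paper simply observes that $U_\delta$ is properly homotopy equivalent to $\nu(M)$, whereas you exhibit an explicit $S^1$-equivariant diffeomorphism via $\exp^\perp$ and the radial rescaling $\phi$.
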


Since the Thom class for this $KK_{S^1}$-equivalence plays an important role in the present paper, we explicitly define it.
Since $U_\delta$ is diffeomorphic to an open set of $\nu(M )$ by the normal exponential mapping $\exp^\perp$, we can define a projection $\varpi:U_\delta\to M $.
We define the {\bf fiberwise Clifford operator} $C^{\rm fib}_x$ as follows.
For $x\in M $ and $v\in U_\delta$ so that $\varpi(v)=x$, $C^{\rm fib}_x(v)\in T_v^{\rm fib}U_\delta$ is defined by $-(d\exp^\perp_x)_{(\exp^\perp_x)^{-1}(v)}(\exp^\perp_x)^{-1}(v)$.
Roughly speaking, $C^{\rm fib}_x(v)=$``$\overrightarrow{xv}$''.
The field $\{C^{\rm fib}_x\}_{x\in X}$ defines a single operator $C^{\rm fib}$.

\begin{dfn}
$(1)$ When we regard $\nu(M )$ as a complex vector bundle, we denote it by $\nu_{\bb{C}}(M )$.

$(2)$ We define a $KK$-element
$[\tau_{U_\delta}]\in KK_{S^1}(C(M ),C_0(U_\delta))$ by
$$[\tau_{U_\delta}]=
\bra{C_0(U_\delta,\bigwedge^*\varpi^*\nu_{\bb{C}}(M )),\pi,\frac{C^{\rm fib}}{\sqrt{\delta^2-(C^{\rm fib})^2}}}.$$
\end{dfn}
\begin{rmk}\label{delta is not important}
We can flexibly choose $\delta$. If $0<\vep<\delta$, 
$\ca{R}KK(M ;C(M ),C_0(U_\vep))$ is isomorphic to $\ca{R}KK(M ;C(M ),C_0(U_\delta))$, and $[\tau_{U_\vep}]$ corresponds to $[\tau_{U_\delta}]$.
\end{rmk}


We want to give the inverse of $[j^*]$.
Note that $[j^*]$ and $[\tau_{U_\delta}]$ give opposite direction homomorphisms. Although they are not mutually inverse, $[\tau_{U_\delta}]$ gives an isomorphism.
Thus, in order to give ``$[j^*]^{-1}$'', we need a correction term.
It should be a $KK$-element $[\alpha]\in \ca{R}KK_{S^1}(M ;C(M ),C(M ))$ satisfying that 
$$[\alpha]\grotimes [\tau_{U_\delta}]\grotimes [j_*]=1.$$
Since $[\tau_{U_\delta}]\grotimes [j_*]$ is the restriction of the Thom class to the zero section, 
it is the Euler class, which is denoted by $[e_{U_\delta}]$.
Therefore, ``$[j^*]^{-1}$'' should be given by the composition of the Thom isomorphism and the ``inverse of the Euler class''. 

In order to define it, we introduce the following.
Recall that the representation ring $R(S^1)$ is isomorphic to $\bb{Z}[q,q^{-1}]$.
Each $q^k$ corresponds to the irreducible representation of weight $k$.

\begin{dfn}
$(1)$ Let $R(S^1)_\pos$ be the $R(S^1)$-algebra defined by
$$\bbra{\sum_n a_nq^n\ \middle|\  a_n\in \bb{Z}, a_n=0\text{ for all }n\ll 0}.$$

$(2)$ For an $R(S^1)$-module $\ca{M}$, we denote $\ca{M}\otimes_{R(S^1)}R(S^1)_\pos$ by $\ca{M}_\pos$. 
For $R(S^1)$-modules $\ca{M}$ and $\ca{N}$ and an $R(S^1)$-module homomorphism $f:\ca{M}\to \ca{N}$, the corresponding homomorphism $\ca{M}_\pos\to \ca{N}_\pos$ is denoted by $f_\pos$.

$(3)$ For an $R(S^1)$-module $\ca{M}$, we define a natural map $\pos: \ca{M}\to \ca{M}_\pos$ by $\pos(m):=m\otimes 1$.
\end{dfn}

The above algebra comes from the concept of positive energy representation of loop groups \cite{PS}. 

There are many invertible elements in this algebra.

\begin{lem}\label{fps invertible}
Let $Z=\sum_n a_nq^n\in R(S^1)_\pos$ and let $N$ be the least number such that $a_N\neq 0$. 
Then, $Z$ is invertible if and only if $a_N$ is $1$ or $-1$.
\end{lem}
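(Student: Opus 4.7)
The plan is to prove both implications separately, viewing $R(S^1)_\pos$ as the ring $\bb{Z}((q))$ of formal Laurent series in $q$ with integer coefficients bounded below in degree.

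First I would handle the ``only if'' direction by a straightforward lowest-degree-term argument. Suppose $ZW = 1$ for some $W = \sum_m b_m q^m \in R(S^1)_\pos$, and let $M$ be the least integer with $b_M \neq 0$. Because both $Z$ and $W$ have coefficients vanishing far enough below, the product is well-defined and its lowest nonzero coefficient is $a_N b_M$ in degree $N+M$. Comparing with $1 = q^0$ forces $N+M = 0$ and $a_N b_M = 1$ in $\bb{Z}$, so $a_N \in \{1,-1\}$.

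For the ``if'' direction, I would explicitly construct the inverse using the geometric series trick. Factor $Z = a_N q^N(1 + u)$ where $u := \sum_{k \geq 1}(a_{N+k}/a_N)\, q^k$; the coefficients $a_{N+k}/a_N$ lie in $\bb{Z}$ precisely because $a_N = \pm 1$. Since $u$ has only strictly positive powers of $q$, the formal sum $\sum_{j \geq 0}(-u)^j$ makes sense in $R(S^1)_\pos$: each fixed coefficient in degree $k$ receives contributions from only finitely many $j$ (namely $j \leq k$), so there is no convergence issue in the formal series ring. One checks directly that $(1+u)\sum_{j \geq 0}(-u)^j = 1$, and therefore
\[
Z^{-1} = a_N^{-1} q^{-N} \sum_{j \geq 0}(-u)^j \in R(S^1)_\pos.
\]

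The argument is essentially formal and there is no real obstacle; the only thing to keep careful track of is that $R(S^1)_\pos$ is closed under the operations used, i.e.\ that the geometric series and the factorization actually live in the ring rather than in some larger completion. This is guaranteed by the ``bounded below in degree'' condition in the definition combined with the fact that $u$ has no terms of degree $\leq 0$.
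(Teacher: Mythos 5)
Your proof is correct and, for the ``if'' direction, takes essentially the same route as the paper: factor out $\pm q^N$ and invert the remaining $1+u$ by a formal geometric (Neumann) series, noting that each coefficient is a finite sum because $u$ has only strictly positive powers. You also supply a proof of the ``only if'' direction via the lowest-degree-coefficient argument ($a_N b_M = 1$ in $\mathbb{Z}$ forces $a_N = \pm 1$), which the paper's proof leaves unstated; that is a correct and worthwhile addition.
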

\begin{proof}
If $a_N=\pm1$, $Z$ can be written as $\pm q^N+\sum_{n>N}a_nq^n=\pm q^N\bra{1\pm \sum_{n>N}a_nq^{n-N}}$.
Since $\pm q^N$ is invertible, it suffices to check the statement for $N=0$ and $a_N=1$.
Now it it clear by the Neumann series argument: $\bra{1- \sum_{n>N}a_nq^{n-N}}^{-1}=\sum_{k\geq 0}\bra{\sum_{n>N}a_nq^{n-N}}^k$.
\end{proof}




We prepare a basic fact from equivariant $K$-theory.

\begin{lem}\label{KH for trivial action}
If a compact group $H$ acts on a locally compact Hausdorff space $Y$ trivially, $K^*_H(Y)\cong K^*(Y)\otimes R(H)$
by the following correspondence: For an $H$-equivariant vector bundle $E$,
$$E\mapsto \sum_{\rho\in\widehat{H}} \Hom_H(E,V_\rho)\otimes \rho
\in K^*(Y)\otimes R(H),$$
where $V_\rho$ is the representation space corresponding to an irreducible representation $\rho$, and $\widehat{H}$ is the set of isomorphism classes of irreducible unitary representations.
\end{lem}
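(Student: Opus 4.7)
The plan is to produce an explicit inverse to the map in the statement by means of the isotypical decomposition of $H$-equivariant vector bundles over a space with trivial $H$-action. For any $H$-equivariant vector bundle $E \to Y$ and any $\rho \in \widehat{H}$, define a fiberwise endomorphism
$$p_\rho(e) := \dim(V_\rho)\int_H \overline{\chi_\rho(h)}\, h \cdot e \, dh,$$
where $\chi_\rho$ is the character of $\rho$. Standard representation theory shows that $p_\rho$ is an $H$-equivariant projection onto the $\rho$-isotypical component $E_\rho$ of $E$, and $\sum_\rho p_\rho = \id_E$ (with only finitely many terms contributing over each compact subset of $Y$, since the rank of $E$ is locally finite). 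Because $H$ acts trivially on $Y$, this averaging makes sense as a continuous bundle map, and the rank of $p_\rho$ is locally constant on $Y$; hence $E_\rho \subset E$ is a genuine $H$-equivariant subbundle.

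The second step is to identify $E_\rho$ with $\Hom_H(E, V_\rho)^* \otimes V_\rho$ via the canonical evaluation pairing, and to check that $\Hom_H(E, V_\rho) \to Y$ is itself an ordinary complex vector bundle. Local triviality of $E$ as an $H$-bundle, together with the continuity of $p_\rho$, gives local models $E|_U \cong U \times F$ for some finite-dimensional $H$-module $F$, whence $\Hom_H(E, V_\rho)|_U \cong U \times \Hom_H(F, V_\rho)$. Assembling these gives an additive natural transformation
$$\Phi: K_H^0(Y) \longrightarrow K^0(Y) \otimes R(H), \qquad [E] \longmapsto \sum_{\rho \in \widehat{H}} [\Hom_H(E, V_\rho)] \otimes [\rho],$$
which is the map stated in the lemma.

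The third step constructs the inverse $\Psi: K^0(Y) \otimes R(H) \to K_H^0(Y)$ by $[W] \otimes [\rho] \mapsto [W^* \otimes V_\rho]$ (with $H$ acting only on $V_\rho$; the dual is required so that the direction of $\Hom$ matches). Schur's lemma applied fiberwise yields $\Phi \circ \Psi = \id$, and the decomposition $E \cong \bigoplus_\rho \Hom_H(E, V_\rho)^* \otimes V_\rho$ obtained in step two yields $\Psi \circ \Phi = \id$. Finally, the $K^1$ statement is reduced to the $K^0$ case by the suspension isomorphism $K_H^1(Y) \cong K_H^0(Y \times \bb{R})$, where $H$ still acts trivially on the base, combined with $K^1(Y) \otimes R(H) \cong K^0(Y \times \bb{R}) \otimes R(H)$.

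The main obstacle is step one: verifying that the integral formula for $p_\rho$ really produces continuous subbundles, particularly when $\widehat{H}$ is infinite. The key input is the compactness of $H$ (so the Haar integral makes sense pointwise as an endomorphism in a finite-dimensional fiber), together with local finiteness of the isotypical decomposition for a bundle of bounded rank. Once continuity and local-constancy of ranks are secured, all remaining checks are standard applications of Schur's lemma and the Peter--Weyl theorem; one should, however, be mindful that the tensor product $K^0(Y) \otimes R(H)$ on the right-hand side of the claimed isomorphism is the algebraic tensor product of abelian groups, so no analytical completions are hidden in the statement.
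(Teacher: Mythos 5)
The paper states this lemma without proof; it is a classical result of Segal (whose 1968 paper on equivariant $K$-theory is in the bibliography), so there is no internal argument to compare yours against. Your proof --- decompose $E$ into isotypical pieces via the averaged projections $p_\rho$, identify the multiplicity bundles, and invert by $[W]\otimes[\rho]\mapsto[W^*\otimes V_\rho]$ --- is the standard one and is correct, including the dualization required because the paper writes $\Hom_H(E,V_\rho)$, the dual of the usual multiplicity space $\Hom_H(V_\rho,E)$ that appears in the decomposition $E\cong\bigoplus_\rho \Hom_H(V_\rho,E)\otimes V_\rho$. Two points you flag but do not fully argue deserve a sentence each. First, you invoke \emph{$H$-equivariant} local triviality of $E$; this is not formal consequence of ordinary local triviality, but it does hold here either because nearby finite-dimensional representations of a compact group on a fixed $\bb{C}^n$ are conjugate by a continuously chosen unitary, or (sidestepping that entirely) because $\Hom_H(V_\rho,E)$ is the image of the continuous averaging projection on the honest bundle $\Hom(V_\rho,E)$ and has rank $\dim E_\rho/\dim V_\rho$, which is locally constant once the $E_\rho$ are shown to be subbundles. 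Second, the local constancy of $\rank\,p_\rho$ follows since rank is lower semi-continuous, the (integer-valued) ranks of the $p_\rho$ sum to the locally constant $\rank\,E$, and this forces each summand to be locally constant. With these in place, the Schur's-lemma verifications of $\Phi\circ\Psi=\id$ and $\Psi\circ\Phi=\id$, and the reduction of the $K^1$ statement via $K^1_H(Y)\cong K^0_H(Y\times\bb{R})$ with $H$ still acting trivially, are all sound.
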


\begin{rmk}
The same construction works for more general $KK$-groups, for example $KK_H(A,B)\cong KK(A,B)\otimes R(H)$ for $C^*$-algebras $A$ and $B$ on which $H$ trivially acts.
\end{rmk}

Since the $S^1$-action on $M $ is trivial, we can apply this result on the equivariant bundle $\nu_{\bb{C}}(M )$.
There exists a $\bb{Z}_2$-graded vector bundle $E_n$ for each $n>0$ so that
\begin{equation}\label{decomposition of Euler 1}
\nu_{\bb{C}}(M )\cong \bigoplus_{n>0}E_n\otimes  \bb{C}_n,
\end{equation}
where $\bb{C}_n$ is the representation space of $S^1$ of weight $n$.
It is an essential point that $E_n=0$ for $n\leq 0$.
It holds because we have defined the complex structure using the $S^1$-action. 

\begin{lem}\label{lem comp of Eul}
$(1)$ As a $KK$-element, $[e_{U_\delta}]=\bra{C(M ,\bigwedge^*\bigoplus_{n>0}E_n\otimes \bb{C}_n),\pi,0}$.

$(2)$ When we decompose $\bigwedge^*\bigoplus_{n>0}E_n\otimes \bb{C}_n$ by Lemma \ref{KH for trivial action}, it is of the form
$$(\ud{\bb{C}}_{M }\otimes \bb{C}_0)\oplus
\bigoplus_{n>0}(\text{\rm a vector bundle})\otimes \bb{C}_n.$$

$(3)$ $[e_{U_\delta}]$ is invertible.
\end{lem}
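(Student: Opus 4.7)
For (1), the strategy is to unpack the Kasparov product $[e_{U_\delta}] = [\tau_{U_\delta}] \grotimes_{C_0(U_\delta)} [j_*]$, where $[j_*]$ is the class of the restriction $*$-homomorphism $j^*: C_0(U_\delta) \to C(M)$. Tensoring the Hilbert module $C_0(U_\delta, \bigwedge^* \varpi^* \nu_{\bb{C}}(M))$ over $C_0(U_\delta)$ with $C(M)$ restricts sections to the zero section $M \subset U_\delta$; since $\varpi$ is the identity on $M$, the module becomes $C(M, \bigwedge^* \nu_{\bb{C}}(M))$. The operator $C^{\rm fib}/\sqrt{\delta^2 - (C^{\rm fib})^2}$ vanishes on $M$ because $C^{\rm fib}_x(v) = $ ``$-\overrightarrow{xv}$'' is zero when $v = x$. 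Substituting the isotypic decomposition $\nu_{\bb{C}}(M) \cong \bigoplus_{n>0} E_n \otimes \bb{C}_n$ from (\ref{decomposition of Euler 1}) yields the asserted representative.

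For (2), the key observation is that the $S^1$-weight is additive under $\wedge$: a homogeneous element $v_{i_1} \wedge \cdots \wedge v_{i_k}$ with $v_{i_j} \in E_{n_{i_j}} \otimes \bb{C}_{n_{i_j}}$ has $S^1$-weight $n_{i_1} + \cdots + n_{i_k}$. Since every $n_{i_j} \geq 1$, this weight is strictly positive whenever $k \geq 1$, while $\bigwedge^0 = \ud{\bb{C}}_M$ contributes the unique weight-$0$ piece. Applying the isotypic decomposition of Lemma \ref{KH for trivial action} separates the weight-$0$ summand $\ud{\bb{C}}_M \otimes \bb{C}_0$ from the strictly positive-weight summands, which proves (2).

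For (3), I combine (1), (2), and the Remark after Lemma \ref{KH for trivial action} to write, under $KK_{S^1}(C(M), C(M)) \cong KK(C(M), C(M)) \otimes R(S^1)$,
\begin{equation*}
[e_{U_\delta}] = \bm{1}_{C(M)} \otimes 1 + \sum_{n > 0} \alpha_n \otimes q^n
\end{equation*}
for appropriate $\alpha_n \in KK(C(M), C(M))$ (the identity appears as the leading coefficient because the weight-$0$ summand is the trivial line bundle). In $KK_{S^1}(C(M), C(M))_\pos$, I then produce a two-sided inverse by a Neumann series
\begin{equation*}
[e_{U_\delta}]^{-1} = \sum_{k \geq 0} (-1)^k \Bigl(\sum_{n > 0} \alpha_n \otimes q^n\Bigr)^k,
\end{equation*}
generalizing Lemma \ref{fps invertible}. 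The coefficient of any fixed $q^m$ comes from tuples $(n_1, \ldots, n_k)$ with $\sum n_j = m$ and each $n_j \geq 1$, so at most $k \leq m$ terms contribute and the sum is well-defined in the tensor product with $R(S^1)_\pos$.

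The main obstacle is precisely this invertibility step. The Neumann argument works only because (a) the leading coefficient is literally $\bm{1}_{C(M)}$, not merely some invertible $KK$-element, and (b) every nontrivial correction carries $q^n$ with $n > 0$, giving $q$-adic convergence. Both features are inherited from the positivity in the decomposition (\ref{decomposition of Euler 1}), which in turn relies on the fact that the complex structure on $\nu(M)$ was built from the infinitesimal $S^1$-action (Lemma \ref{cplx str of normal}); this is the pivotal ingredient that makes the whole passage to $R(S^1)_\pos$ work.
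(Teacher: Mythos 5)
Your proposal is correct and follows essentially the same line as the paper: identify the Euler class as $\bigwedge^*\nu_{\bb{C}}(M)$ at the zero section, decompose by $S^1$-weight to find that only $\bigwedge^0=\ud{\bb{C}}_M$ has weight zero, and invert via a Neumann series that converges in $R(S^1)_\pos$ because each nonzero correction carries strictly positive weight. The paper's proof of $(2)$ uses the explicit tensor-product factorization $\bigwedge^*\bigoplus_n E_n\otimes\bb{C}_n\cong\bigotimes_n\bigwedge^*(E_n\otimes\bb{C}_n)$, while you argue via additivity of weights under wedge; these are equivalent and routine, and for $(1)$ and $(3)$ you simply supply details that the paper leaves implicit (the paper labels $(1)$ as obvious and refers $(3)$ to Lemma \ref{fps invertible}).
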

\begin{proof}
$(1)$ Obvious.

$(2)$ We compute the exterior product.
\begin{align*}
\bigwedge^*\bigoplus_{n>0}E_n\otimes \bb{C}_n
&\cong \bigotimes_{n>0}\bigwedge^*E_n\otimes \bb{C}_n \\
&=\bigotimes_{n>0}\bigoplus_{0\leq m\leq \rank (E_n)}
\bra{\bigwedge^mE_n}\otimes \bb{C}_{nm} \\
&=\bigotimes_{n>0}\bbra{
(\ud{\bb{C}}_{M }\otimes \bb{C}_0)\oplus \text{ higher terms}} \\
&=(\ud{\bb{C}}_{M }\otimes \bb{C}_0)\oplus \text{ higher terms},
\end{align*}
where ``higher terms'' means a finite sum of ``a vector bundle $\otimes$ $\bb{C}_k$ for $k> 0$''s.
In the last equality, we use the fact that the product of a higher term and the trivial bundle is again higher,
and the product of higher terms is again higher.

$(3)$ It is parallel to Lemma \ref{fps invertible}.
\end{proof}

\begin{dfn}
We call the inverse of $[e_{U_\delta}]$ the {\bf inverse Euler class}.
We denote it by $[e^{-1}_{U_\delta}]\in KK_{S^1}(C(M ),C(M ))_\pos$.
\end{dfn}

The construction so far is valid for arbitrary $\delta>0$ if it is less than the injectivity radius.
From now on, we denote $\delta$ by $2\vep$.
$U_\vep$ will appear in the next subsection. There we will use both $U_\vep$ and $U_{2\vep}$.

Let us define an analytic index homomorphism for $X$.

\begin{dfn}
We define the {\bf localized index} by the composition of the following homomorphisms
$$KK_{S^1}(C_0(X),\bb{C})
\xrightarrow{[k_*]\grotimes -}
KK_{S^1}(C_0(U_{2\vep}),\bb{C})
\xrightarrow{[\tau_{U_{2\vep}}]\grotimes -}
KK_{S^1}(C(M ),\bb{C})
\xrightarrow{[e^{-1}_{U_{2\vep}}]\grotimes-}$$
$$KK_{S^1}(C(M ),\bb{C})_\pos
\xrightarrow{[\ud{\bb{C}}_{M }]\grotimes-}
KK_{S^1}(\bb{C},\bb{C})_\pos=R(S^1)_\pos.$$

The composition of this index homomorphism is denoted by $\ind^\pos_{S^1}$.
\end{dfn}


\begin{pro}
The localized index is independent of $\vep$.
\end{pro}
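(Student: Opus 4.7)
The plan is to show that for any two admissible choices $0<\vep_1<\vep_2$ (both less than half the injectivity radius of $\exp^\perp$), the two resulting composite homomorphisms from $KK_{S^1}(C_0(X),\bb{C})$ to $R(S^1)_\pos$ agree. I will attack this factor by factor in the composition defining $\ind^\pos_{S^1}$ and reduce everything to a single compatibility of Thom classes under shrinking tubular neighborhoods.

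First, observe that two of the four building blocks are manifestly independent of $\vep$: the final class $[\ud{\bb{C}}_M]\in KK_{S^1}(\bb{C},C(M))$ does not involve $\vep$ at all, and by the explicit formula in Lemma \ref{lem comp of Eul}(1), the Euler class $[e_{U_{2\vep}}]$ is represented by $(C(M,\bigwedge^*\bigoplus_{n>0}E_n\otimes\bb{C}_n),\pi,0)$, which depends only on the equivariant decomposition \eqref{decomposition of Euler 1} of $\nu(M)$ and not on the size of the tubular neighborhood. Hence $[e^{-1}_{U_{2\vep_1}}]=[e^{-1}_{U_{2\vep_2}}]$ in $KK_{S^1}(C(M),C(M))_\pos$. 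Next, since $U_{2\vep_1}\subset U_{2\vep_2}$, the inclusion $\iota:U_{2\vep_1}\hookrightarrow U_{2\vep_2}$ induces a $*$-homomorphism $\iota_*:C_0(U_{2\vep_1})\to C_0(U_{2\vep_2})$ (extension by zero), and one has a factorization $k_{\vep_1}=k_{\vep_2}\circ\iota$, hence $[k_{\vep_1,*}]=[\iota_*]\grotimes[k_{\vep_2,*}]$ in $KK_{S^1}(C_0(U_{2\vep_1}),C_0(X))$.

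The crux is then the Thom-class compatibility
\begin{equation*}
[\tau_{U_{2\vep_1}}]\grotimes[\iota_*]=[\tau_{U_{2\vep_2}}]\quad\text{in}\ KK_{S^1}(C(M),C_0(U_{2\vep_2})),
\end{equation*}
which is the $KK$-theoretic version of Remark \ref{delta is not important}. This is the one step that requires real work. I would prove it by exhibiting an explicit operator homotopy. Passing to the equivalent bounded representative $C^{\rm fib}/(2\vep)$ for $[\tau_{U_{2\vep}}]$ (note $|C^{\rm fib}|<2\vep$ on $U_{2\vep}$, so $F^2-1$ is compactly supported off the zero section), one interpolates inside the ambient module $C_0(U_{2\vep_2},\bigwedge^*\varpi^*\nu_\bb{C}(M))$ through the family
\begin{equation*}
F_t:=\chi_t(|C^{\rm fib}|)\cdot C^{\rm fib},\qquad t\in[0,1],
\end{equation*}
where $\chi_t$ is a continuous family of positive cutoffs interpolating between $1/(2\vep_2)$ and a function that equals $1/(2\vep_1)$ on $[0,2\vep_1)$ and smoothly saturates to a unit-norm Clifford action on $U_{2\vep_2}\setminus U_{2\vep_1}$; the resulting bounded operators yield a homotopy of $KK_{S^1}$-cycles, and at $t=1$ a compact perturbation identifies the module with $(C_0(U_{2\vep_1},\bigwedge^*),C^{\rm fib}/(2\vep_1))$ carrying the restricted $C_0(U_{2\vep_2})$-action, which is precisely the Kasparov product $[\tau_{U_{2\vep_1}}]\grotimes[\iota_*]$.

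Combining the three ingredients, for any $[D]\in KK_{S^1}(C_0(X),\bb{C})$ one has
\begin{align*}
\ind^\pos_{S^1,\vep_1}([D])
&=[\ud{\bb{C}}_M]\grotimes[e^{-1}_{U_{2\vep_1}}]\grotimes[\tau_{U_{2\vep_1}}]\grotimes[\iota_*]\grotimes[k_{\vep_2,*}]\grotimes[D]\\
&=[\ud{\bb{C}}_M]\grotimes[e^{-1}_{U_{2\vep_2}}]\grotimes[\tau_{U_{2\vep_2}}]\grotimes[k_{\vep_2,*}]\grotimes[D]
=\ind^\pos_{S^1,\vep_2}([D]),
\end{align*}
which finishes the proof. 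The expected main obstacle is making the operator homotopy in the third paragraph precise enough to be manifestly a valid homotopy of $S^1$-equivariant Kasparov cycles (continuity of $F_t$ in the strict topology, equivariant compactness of $\pi(a)(F_t^2-1)$ uniformly in $t$); all the remaining manipulations are formal consequences of associativity of the Kasparov product.
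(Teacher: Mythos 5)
Your proposal is correct and follows the same overall strategy as the paper: note that $[e^{-1}_{U_{2\vep}}]$ is $\vep$-independent, then reduce to a compatibility between Thom classes and zero-extension inclusions. Where you and the paper genuinely diverge is in how the crux compatibility is established. You factor $[k_{\vep_1,*}]=[\iota_*]\grotimes[k_{\vep_2,*}]$ and then isolate the statement $[\tau_{U_{2\vep_1}}]\grotimes[\iota_*]=[\tau_{U_{2\vep_2}}]$, which you propose to prove by an explicit operator homotopy via cutoff functions. The paper instead takes the leaner route of replacing the \emph{representative} of $[\tau_{U_{2\delta}}]$ by the module $\bra{C_0(U_{2\vep},\bigwedge^*\varpi^*\nu_\bb{C}(M)),\pi,C^{\rm fib}/\sqrt{4\vep^2-(C^{\rm fib})^2}}$ supported in the smaller tube (Remark~\ref{delta is not important} gives this), and then observing that this \emph{single} Kasparov module already satisfies the connection conditions to be a Kasparov product both of $[\tau_{U_{2\vep}}]\grotimes[k_*]$ and of $[\tau_{U_{2\delta}}]\grotimes[k'_*]$ — avoiding any homotopy entirely. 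Your route is workable but costlier, and two details need tightening: the parenthetical ``$F^2-1$ is compactly supported off the zero section'' is false — it only \emph{vanishes at infinity} on the tube, which is what is actually needed for $C_0(U_{2\vep})$-compactness; and the $t=1$ endpoint is not literally the module $C_0(U_{2\vep_1},\bigwedge^*)$ via a ``compact perturbation,'' but rather the cycle on $C_0(U_{2\vep_2},\bigwedge^*)$ decomposes (up to a further elementary homotopy) as the cycle on $C_0(U_{2\vep_1},\bigwedge^*)$ plus a degenerate cycle on the complementary annulus where $F_1$ acts by a commuting unitary. Once these are fixed your argument closes; but the paper's is the more economical path.
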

\begin{proof}
Let $\delta$ be another positive real number so that $2\delta$ is less than the injectivity radius of $\exp^\perp$.
Then, clearly $[e^{-1}_{U_{2\vep}}]=[e^{-1}_{U_{2\delta}}]$. 
Thus, it suffices to prove that the following diagram commutes:
$$\begin{CD}
KK_{S^1}(C_0(X),\bb{C}) 
@>{[k_*]\grotimes -}>> 
KK_{S^1}(C_0(U_{2\vep}),\bb{C}) \\
@V[k'_*]\grotimes -VV 
@VV{[\tau_{U_{2\vep}}]\grotimes -}V \\
KK_{S^1}(C_0(U_{2\delta}),\bb{C}) 
@>>{[\tau_{U_{2\delta}}]\grotimes -}> KK_{S^1}(C(M ),\bb{C}),
\end{CD}$$
where $k'_*:C_0(U_{2\delta})\to C_0(X)$ is the zero extension.

We may assume that $\delta>\vep$ by symmetry.
For a representative of the Thom class of $U_{2\delta}$, we can use
$$\bra{C_0(U_{2\vep},\bigwedge^*\varpi^*\nu_{\bb{C}}(M^{S^1})),\pi,\frac{C^{\rm fib}}{\sqrt{4\vep^2-(C^{\rm fib})^2}}}$$
instead of the natural one $\bra{C_0(U_{2\delta},\bigwedge^*\varpi^*\nu_{\bb{C}}(M^{S^1})),\pi,\frac{C^{\rm fib}}{\sqrt{4\delta^2-(C^{\rm fib})^2}}}$.
Then, the same Kasparov module satisfies the condition to be a Kasparov product of 
$[\tau_{U_{2\vep}}]$ and $[k_*]$ and that of $[\tau_{U_{2\delta}}]$ and $[k'_*]$.
\end{proof}

This localized index satisfies the following property.

\begin{thm}
For $[D]\in KK_{S^1}(C(M),\bb{C})$, we have
$[\ud{\bb{C}}_{M}]\grotimes [D]
=\ind_{S^1}^\pos([i^*]\grotimes [D])$.
\end{thm}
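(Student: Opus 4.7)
The plan is to unwind the definition of $\ind_{S^1}^\pos$ applied to $[i^*]\grotimes[D]$ and telescope the resulting chain of Kasparov products, using the factorization $i=k\circ j$ together with the defining property of $[e^{-1}_{U_{2\vep}}]$.

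First I would observe that, as $*$-homomorphisms, $i^*\circ k_* = j^*$: for $f\in C_0(U_{2\vep})$ and $m\in M$, one has $(i^*\circ k_*)(f)(m) = k_*(f)(i(m)) = f(j(m)) = j^*(f)(m)$, since $i(m)=j(m)\in U_{2\vep}$ and $k_*$ is extension by zero (hence the identity on $U_{2\vep}$). Passing to $KK$-classes by functoriality yields
$$[k_*]\grotimes_{C_0(X)}[i^*] = [j^*]\qquad\text{in}\qquad KK_{S^1}(C_0(U_{2\vep}),C(M)).$$
Second, by the discussion immediately preceding Lemma \ref{lem comp of Eul}, the Kasparov product $[\tau_{U_{2\vep}}]\grotimes[j^*]$ is precisely the restriction of the Thom class to the zero section $M\hookrightarrow U_{2\vep}$, i.e.\ the Euler class $[e_{U_{2\vep}}]$. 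Finally, by the definition of the inverse Euler class, $[e^{-1}_{U_{2\vep}}]\grotimes[e_{U_{2\vep}}]=\bm{1}_{C(M)}$ in $KK_{S^1}(C(M),C(M))_\pos$.

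Assembling these three observations with associativity of the Kasparov product, and writing $\ind_{S^1}^\pos$ as the indicated five-fold composition, I compute
\begin{align*}
\ind_{S^1}^\pos\bigl([i^*]\grotimes[D]\bigr)
&= [\ud{\bb{C}}_M]\grotimes[e^{-1}_{U_{2\vep}}]\grotimes[\tau_{U_{2\vep}}]\grotimes\bigl([k_*]\grotimes[i^*]\bigr)\grotimes[D]\\
&= [\ud{\bb{C}}_M]\grotimes[e^{-1}_{U_{2\vep}}]\grotimes\bigl([\tau_{U_{2\vep}}]\grotimes[j^*]\bigr)\grotimes[D]\\
&= [\ud{\bb{C}}_M]\grotimes\bigl([e^{-1}_{U_{2\vep}}]\grotimes[e_{U_{2\vep}}]\bigr)\grotimes[D]\\
&= [\ud{\bb{C}}_M]\grotimes[D],
\end{align*}
which is the desired equality (understood in $R(S^1)_\pos$ via the natural map $\pos$). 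There is no substantive geometric obstacle here; the only care required is notational, namely keeping the order of Kasparov products consistent, verifying that the functorial identity $[k_*]\grotimes[i^*]=[j^*]$ holds at the level of $KK$-classes (not merely up to homotopy through an auxiliary cycle), and noting that the final line tacitly passes through $\pos:R(S^1)\to R(S^1)_\pos$, under which the unit is preserved. The essential content is therefore concentrated in the interpretation of $[\tau_{U_{2\vep}}]\grotimes[j^*]$ as the Euler class, which has already been established in Section \ref{Non-compact manifolds}.
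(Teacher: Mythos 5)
Your proof is correct and is precisely the unwinding that the paper leaves to the reader; the paper's proof is the single sentence ``Obvious from the construction,'' and the telescoping you carry out---using $i^*\circ k_*=j^*$, the identification $[\tau_{U_{2\vep}}]\grotimes[j^*]=[e_{U_{2\vep}}]$ from the discussion preceding Lemma \ref{lem comp of Eul}, and the defining cancellation $[e^{-1}_{U_{2\vep}}]\grotimes[e_{U_{2\vep}}]=\bm{1}$---is exactly what that remark intends. Your closing caveats (functoriality of $[\cdot]$ on $*$-homomorphisms, $R(S^1)$-bilinearity of the Kasparov product, and the implicit passage through $\pos$) are the only points where care is needed, and you address each of them.
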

\begin{proof}
Obvious from the construction.
\end{proof}


\subsection{Fixed-point formula of the localized index}\label{Fixed-point formula of the localized index}

We will deduce the fixed-point formula for the localized index by using Theorem \ref{thm inv ind thm}.
We first study the general cases, and then the $K$-oriented cases.

\subsubsection{General cases}

We will prove the fixed-point formula by translating each step of the construction of the localized index, into the topological language, by using Definition \ref{Def PD}.
The problem is only the following: Although we assumed that the base manifold is complete in Definition \ref{Def PD}, $U_{2\vep}$ is not complete.
In order to overcome this problem, we introduce the following.
Recall that $U_x$ is the $\vep$-neighborhood of $x$ in $M$.
Consequently, we have $U_x\subseteq U_{2\vep}$ if $x\in U_\vep$.

Recall that for a Riemannian manifold $Y$, we define $Cl_\tau(Y):=C_0(Y,\Cl_+(TY))$.


\begin{dfn}\label{PD for non complete}
$(1)$ We define the local Bott element of $U_{\vep}$ by
$$[\Theta_{U_\vep}]':=
\bbra{
\Bigl(C_0(U_x)\grotimes \Cl_+(T_xX),1_x,\vep^{-1}\Theta_x
\Bigr)}_{x\in U_{\vep}}\in \ca{R}KK_{S^1}(U_\vep;C_0(U_\vep),C_0(U_{2\vep})\grotimes \uwave{Cl_\tau(U_{\vep})}).$$

$(2)$ We define $\PD_{U_\vep}':KK_{S^1}(C_0(U_{2\vep}),\bb{C})
\to \ca{R}KK_{S^1}(U_\vep;C_0(U_{\vep}),Cl_\tau(U_\vep))$ by
$$\PD_{U_\vep}'([D]):=
[\Theta_{U_\vep}]'\grotimes_{C_0(U_{2\vep})}{[D]}.$$
%
%
%
\end{dfn}
\begin{rmks}
$(1)$ Note that $[\Theta_{U_\vep}]'$ cannot be defined without data of $U_{2\vep}$.

$(2)$ $\PD_{U_\vep}'$ is a homomorphism from equivariant $K$-homology of $U_{2\vep}$ to equivariant representable $K$-theory {\it of the smaller space $U_\vep$}, not of $U_{2\vep}$.
Although this seems to be strange, since $U_{2\vep}$ and $U_{\vep}$ are equivariantly homotopy equivalent, we have
$$\ca{R}KK_{S^1}(U_{2\vep};C_0(U_{2\vep}),Cl_\tau(U_{2\vep}))\cong \ca{R}KK_{S^1}(U_\vep;C_0(U_{\vep}),Cl_\tau(U_\vep)).$$
\end{rmks}

We need one more ingredient in order to deduce the cohomology formula.

\begin{dfn}\label{t ind for tri act}
Let $Y$ be a compact manifold equipped with the trivial action of a compact Lie group $G$.

$(1)$ We define $ch:\ca{R}KK_G(Y;C(Y),C_0(TY))\to H^*_c(TY;\bb{Q})\otimes R(G)$ by the composition
$$\ca{R}KK_G(Y;C(Y),C_0(TY))
\xrightarrow{\cong}
K_{c}^0(TY)\otimes R(G)
\xrightarrow{ch\otimes \id}
H^*_c(TY;\bb{Q})\otimes R(G).$$

$(2)$ We define $\displaystyle \int_{TY}:H^*_c(TY;\bb{Q})\otimes R(G)\to \bb{Q}\otimes R(G)$ by $\displaystyle \int_{TY} (u\otimes \rho):=\bra{\int_{TY}u}\otimes \rho$ for $u\in H^*_c(TY;\bb{Q})$ and $\rho \in R(G)$, where $TY$ is oriented by the natural identification $T(TY)\cong TY\otimes \bb{C}$.

$(3)$ We define 
$t-\ind_Y:\ca{R}KK_G(Y;C(Y),C_0(TY))\to \bb{Q}$ by
$$t-\ind_Y(u):=(-1)^{\dim(Y)}\int_{TY}ch(u)\td(TY\otimes\bb{C}),$$
where $\td(TY \otimes \bb{C})$ is the Todd class of $TY \otimes \bb{C}$.
\end{dfn}

\begin{thm}[Atiyah-Singer]\label{thm AS}
If $Y$ is a compact manifold equipped with the trivial action of a compact Lie group $G$, for a Dirac operator on $D$ on $Y$, we have
$$[\ud{\bb{C}}_Y]\grotimes[D]
=t-\ind_Y(\sigma(D)).$$
\end{thm}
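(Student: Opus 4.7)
The plan is to reduce the assertion to the classical Atiyah--Singer index theorem for Dirac operators on closed manifolds, exploiting the triviality of the $G$-action on $Y$ to strip off the representation ring.

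First, I would invoke Lemma \ref{KH for trivial action} together with the remark following it. Because $G$ acts trivially on $Y$, this yields canonical identifications
$$KK_G(C(Y),\bb{C}) \cong KK(C(Y),\bb{C}) \otimes R(G) \quad\text{and}\quad \ca{R}KK_G(Y;C(Y),C_0(TY)) \cong \ca{R}KK(Y;C(Y),C_0(TY)) \otimes R(G).$$
The key observation is that all of the operations appearing in the theorem---formation of the symbol $\sigma(D)$, the Kasparov product with $[\ud{\bb{C}}_Y]$, the Chern character, and the fiber integration $\int_{TY}$ of Definition \ref{t ind for tri act}---are $R(G)$-linear under these decompositions. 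Hence I would decompose $[D]=\sum_\rho[D_\rho]\otimes \rho$ accordingly and check the identity factor by factor, reducing matters to the non-equivariant statement with $[D]\in KK(C(Y),\bb{C})$.

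Once in the non-equivariant setting, the plan is to identify each side with one of the two indices appearing in classical index theory. The left-hand side $[\ud{\bb{C}}_Y]\grotimes[D]\in KK(\bb{C},\bb{C})=\bb{Z}$ is, by the very construction of the $KK$-class of an elliptic operator, the Fredholm index of the Dirac operator $D$. The right-hand side, by Definition \ref{t ind for tri act}, is precisely the Atiyah--Singer topological index of $D$ written in its cohomological form $(-1)^{\dim Y}\int_{TY}ch(\sigma(D))\td(TY\otimes \bb{C})$. The equality of these two integers is then the content of the classical Atiyah--Singer index theorem for Dirac operators on a closed manifold.

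The main delicate point---and the only real hurdle---will be confirming that the conventions match. Specifically, one must verify that the Kasparov symbol $[\sigma_D]\in \ca{R}KK(Y;C(Y),C_0(TY))$ corresponds under the identification $\ca{R}KK(Y;C(Y),C_0(TY))\cong K^0_c(TY)$ to the standard topological principal symbol, and that the sign $(-1)^{\dim Y}$ in Definition \ref{t ind for tri act} is consistent with the orientation of $TY$ induced by its canonical almost complex structure. These are standard but conventions-sensitive checks; modulo them, the theorem is a direct citation of the classical result.
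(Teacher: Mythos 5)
The paper itself offers no proof of Theorem \ref{thm AS}: it is stated with the attribution \texttt{[Atiyah-Singer]} and treated as a citation of the classical Atiyah--Singer index theorem, recast in Kasparov's language. So there is no ``paper's own proof'' to compare against; what you have written is a justification of that citation.

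That said, your reduction is correct and is exactly the argument the paper implicitly relies on. The decomposition $KK_G \cong KK \otimes R(G)$ from Lemma \ref{KH for trivial action} and its remark is the right tool, and the $R(G)$-linearity of the three maps involved---$[\ud{\bb{C}}_Y]\grotimes -$, passage to the symbol, and $t\text{-}\ind_Y$---is visible directly from the definitions (for $t\text{-}\ind_Y$ the factor $R(G)$ is carried along untouched by $ch\otimes\id$ and by the fiber integration in Definition \ref{t ind for tri act}, and $[\ud{\bb{C}}_Y]$ corresponds to $[\ud{\bb{C}}_Y]\otimes 1$ under the decomposition of $KK_G(\bb{C},C(Y))$, so the product factors through the non-equivariant index). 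Once reduced to the non-equivariant case, the left side is the Fredholm index by construction of the $K$-homology class of an elliptic operator, and the right side is precisely the cohomological topological index, so the equality is classical Atiyah--Singer. Your flagged worry about conventions is the right one to have: one does need that $[\sigma_D]$ as defined in the paper maps to the usual compactly supported symbol class under $\ca{R}KK(Y;C(Y),C_0(TY))\cong K^0_c(TY)$, and that the orientation of $TY$ by $T(TY)\cong TY\otimes\bb{C}$ together with the factor $(-1)^{\dim Y}$ reproduces the standard sign; both checks are routine and are implicitly assumed by the paper. No gaps.
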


Recall that $[\ca{B}_{TX}]\in \ca{R}KK_{S^1}(X;Cl_\tau(X),C_0(TX))$ gives an $\ca{R}KK$-equivalence between $Cl_\tau(X)$ and $C_0(TX)$. 
We have defined everything appearing in the following diagram.
{\footnotesize
\begin{equation}\label{Big diagram cpt}
\xymatrix{
KK_{S^1}(C_0(X),\bb{C})
\ar^-{[\Theta_X]\grotimes-}[r]
\ar_-{[k_*]\grotimes-}[d] 
\ar@{}|-{(i)}[rdd]&
\ca{R}KK_{S^1}(X;C_0(X),Cl_\tau(X))
\ar^-{-\grotimes[\ca{B}_{TX}]}[r]
\ar|-{k^*}[dd] 
\ar@{}|-{(ii)}[rdd] &
\ca{R}KK_{S^1}(X;C_0(X),C_0(TX))
\ar^-{(Tk)^*}[dd]\\
KK_{S^1}(C_0(U_{2\vep}),\bb{C})
\ar^-{[\Theta_{U_\vep}]'\grotimes-}[rd]
\ar_-{[\tau_{U_{2\vep}}]\grotimes-}[ddd]
\ar@{}|-{(iii)}[rddd] & & & \\
&
\ca{R}KK_{S^1}(U_{\vep};C_0(U_{\vep}),Cl_\tau(U_{\vep}))
\ar^-{-\grotimes[\ca{B}_{TU_{\vep}}]}[r]
\ar|-{j^*}[d] 
\ar@{}|-{(iv)}[rd] &
\ca{R}KK_{S^1}(U_{\vep};C_0(U_{\vep}),C_0(TU_{\vep}))
\ar^-{(Tj)^*}[d] \\
&
\ca{R}KK_{S^1}(M ;C(M ),Cl_{TU_{\vep}}(M ))
\ar^-{-\grotimes[\ca{B}_{TU_\vep}|_M]}[r]
\ar|-{-\grotimes[S_{\nu(M )}^*]}[d] 
\ar@{}|-{(v)}[rd] &
\ca{R}KK_{S^1}(M ;C(M ),C_0(TU_{\vep}|_{M })) 
\ar^-{-\grotimes[d^{Spin^c}_{\nu(M )}]}[d] \\
%
KK_{S^1}(C(M ),\bb{C})
\ar^-{[\Theta_{M }]\grotimes-}[r]
\ar_-{[e^{-1}_{U_{2\vep}}]\grotimes-}[d] 
\ar@{}|-{(vi)}[rd] &
\ca{R}KK_{S^1}(M ;C(M ),Cl_\tau(M ))
\ar^-{-\grotimes[\ca{B}_{TM }]}[r]
\ar|-{[e^{-1}_{U_{\vep}}]\grotimes-}[d] 
\ar@{}|-{(vii)}[rd]&
\ca{R}KK_{S^1}(M ;C(M ),C_0(TM )) 
\ar^-{[e^{-1}_{U_{\vep}}]\grotimes-}[d] \\
KK_{S^1}(C(M ),\bb{C})_\pos
\ar_-{[\Theta_{M }]\grotimes-}[r]
\ar_-{[\ud{\bb{C}}_{M }]\grotimes-}[d] 
\ar@{}|-{(viii)}[rrd]&
\ca{R}KK_{S^1}(M ;C(M ),Cl_\tau(M ))_\pos
\ar_-{-\grotimes[\ca{B}_{TM }]}[r] &
\ca{R}KK_{S^1}(M ;C(M ),C_0(TM ))_\pos
\ar^-{t-\ind_{M }}[d]  \\
R(S^1)_\pos
\ar[rr] &&
R(S^1)_\pos\otimes\bb{Q}.
}
\end{equation}}

Note that $[e^{-1}_{U_\vep}]=[e^{-1}_{U_{2\vep}}]$, and we do not distinguish them from now on.

\begin{pro}
Diagram (\ref{Big diagram cpt}) commutes.
\end{pro}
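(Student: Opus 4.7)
The plan is to decompose the verification of diagram~(\ref{Big diagram cpt}) into its eight labeled faces (i)--(viii) and check each independently, using naturality in the family picture of $\ca{R}KK$-theory together with Theorem~\ref{thm inv ind thm} and Theorem~\ref{thm AS}. Faces (i), (ii), (iv), (vi), (vii) are formal naturality statements; faces (iii), (v), (viii) carry the geometric content.

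For the naturality faces: Squares (i) and (ii) assert that restriction along the closed inclusion $k:U_{2\vep}\hookrightarrow X$ commutes with the formation of the local Bott element and of the family Bott element. In the family picture, both $[\Theta_X]$ and $[\ca{B}_{TX}]$ are defined pointwise in the base, so their restrictions to $U_\vep$ coincide on the nose with the intrinsic elements $[\Theta_{U_\vep}]'$ and $[\ca{B}_{TU_\vep}]$; the hypothesis that each geodesic $\vep$-ball $U_x$ for $x\in U_\vep$ lies in $U_{2\vep}$ is exactly what is needed for the definition of $[\Theta_{U_\vep}]'$ to be available over $U_{2\vep}$ (which in turn forces the $[k_*]$ correction on the left path). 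Square (iv) is the analogous naturality along $j:M\hookrightarrow U_\vep$. Squares (vi) and (vii) commute trivially because $[e^{-1}_{U_\vep}]\in KK_{S^1}(C(M),C(M))_\pos$ is an endomorphism of the first coordinate while $[\Theta_M]$ and $[\ca{B}_{TM}]$ act in the second, so associativity of the Kasparov product suffices.

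For the geometric faces: Face (iii) is the main technical content and compares two paths from $KK_{S^1}(C_0(U_{2\vep}),\bb{C})$ to $\ca{R}KK_{S^1}(M;C(M),Cl_\tau(M))$---one via the Thom class $[\tau_{U_{2\vep}}]$ followed by $[\Theta_M]$, the other via $[\Theta_{U_\vep}]'$ followed by $j^*$ and the dual Spinor class $[S^*_{\nu(M)}]$ (together with faces (iv),(v) to descend to $M$). The equality expresses the splitting of the local Bott operator $\vep^{-1}\Theta_x=\vep^{-1}(\Theta_x^{\rm tan}+\Theta_x^{\rm norm})$ under the orthogonal decomposition $TX|_M\cong TM\oplus \nu(M)$: the normal part coincides with the fiberwise Clifford operator $C^{\rm fib}$ used to define $[\tau_{U_{2\vep}}]$ (after the usual bounded transform), while the tangential part reproduces $[\Theta_M]$. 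Computing the Kasparov product fiber by fiber and using Lemma~\ref{cplx str of normal} to identify a Spinor bundle of $\nu(M)$ with $\bigwedge^*\nu_{\bb{C}}(M)$ yields the identity. Face (v) is the Thom isomorphism for the $K$-oriented bundle $\nu(M)$: under $Cl_{TU_\vep}(M)=Cl_\tau(M)\grotimes Cl_{\nu(M)}(M)$, the $KK$-equivalence $[S^*_{\nu(M)}]$ between $Cl_{\nu(M)}(M)$ and $C(M)$ matches the $KK$-equivalence $[d^{Spin^c}_{\nu(M)}]$ between $C_0(\nu(M))$ and $C(M)$ via $[\ca{B}_{\nu(M)}]$, which is exactly Definition~\ref{KK equiv K ori}. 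Face (viii) invokes Theorem~\ref{thm AS} on the compact fixed-point set $M$, where $S^1$ acts trivially so that the analytic index equals the topological index; compatibility with the $R(S^1)_\pos$ localization follows because every operation in the bottom two rows is $R(S^1)$-linear.

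The hard part will be face (iii). There one must match two representatives of Kasparov products having different unbounded or bounded transforms---$\vep^{-1}\Theta_x$ on one side and $C^{\rm fib}/\sqrt{4\vep^2-(C^{\rm fib})^2}$ on the other---and verify they agree up to operator homotopy after identifying the appropriate module structures. The shift in base algebras from $C_0(U_{2\vep})$ to $C_0(U_\vep)$ to $C(M)$, together with the $C_0(X)$-algebra structures appearing throughout, requires careful bookkeeping in the $\uwave{\phantom{A}}$-conventions of Section~\ref{RKK}. Once (iii) is settled, the remaining commutativities are essentially formal.
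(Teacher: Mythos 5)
Your decomposition into the eight labeled faces is exactly the structure of the paper's proof, and your account of which faces carry geometric content --- especially face~(iii), with the splitting $\Theta^X_x = C^{\rm fib}_x + \Theta^M_x$ and the Spinor identification via Lemma~\ref{cplx str of normal} --- matches the paper's argument. Face~(v) (the Thom/$Spin^c$ decomposition of $[\ca{B}_{TU_\vep}|_M]$) and face~(viii) (Theorem~\ref{thm AS} together with Theorem~\ref{thm inv ind thm}) are also treated as the paper does.

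However, your justification of face~(vi) contains a genuine error. You claim that $[\Theta_M]$ ``acts in the second coordinate'' so that associativity alone handles the commutation with $[e^{-1}_{U_\vep}]\grotimes -$, but this misreads the products involved. Both $[\Theta_M]\grotimes -$ and $[e^{-1}_{U_\vep}]\grotimes -$ are \emph{left} Kasparov products over $C(M)$: the first coordinate $C(M)$ is consumed by each of them. Unwinding with associativity, commutativity of~(vi) amounts to the identity
$[\Theta_M]\grotimes_{C(M)\grotimes\uwave{Cl_\tau(M)}}\sigma_{Cl_\tau(M)}\bigl([e^{-1}_{U_\vep}]\bigr)
= [e^{-1}_{U_\vep}]\grotimes_{C(M)}[\Theta_M]$,
which is a nontrivial \emph{commutativity} statement, not an instance of associativity. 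The issue is that the local Bott element $[\Theta_M]$ restricts every module to the small ball $U_x\subset M$ in the first slot, and one must check that this restriction is compatible with interior tensor by the $K$-theory class $[e^{-1}_{U_\vep}]$. The paper verifies this by a genuinely geometric input: via the exponential map at $x$, the restriction $[e^{-1}_{U_\vep}]|_{U_x}$ is canonically trivialized over each ball $U_x$, continuously in $x\in M$, and this is what allows the two orders of the product to be matched. Without this trivialization step (or some substitute argument), face~(vi) is not established. Your claim for face~(vii) is, by contrast, correct --- there $-\grotimes[\ca{B}_{TM}]$ genuinely acts on the right coefficient algebra, so associativity does suffice --- but (vi) and (vii) are not of the same type and should not be lumped together.

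Apart from this, the plan is sound; but you should also be aware that your sketch of face~(iii), while identifying the correct decomposition and the Spinor identification, does not address the concrete operator-homotopy the paper constructs (via the bounded transforms $\fra{b}(x)=x/\sqrt{1+x^2}$ and the cutoff $\rho$, and a second homotopy adjusting the support from $U_x$ to $U_\vep|_{V_x}$). You flag this as the hard step, which is fair, but a complete argument would need that homotopy or an equivalent.
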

\begin{proof}
We will prove that each square commutes.

$(i)$ Let $[D]\in KK_{S^1}(C_0(X),\bb{C})$. 
We note that 
$$k^*([\Theta_X]\grotimes [D])=\bbra{(C_0(U_x)\otimes \Cl_+(T_xX),1_x,\vep^{-1}\Theta_x)\grotimes_{C_0(X)} [D]}_{x\in U_\vep}.$$
Since the right action of $C_0(X)$ on $C_0(U_x)$ is the extension of the right action of $C_0(U_{2\vep})$, we have
\begin{align*}
&\{(C_0(U_x)\otimes \Cl_+(T_xX),1_x,\vep^{-1}\Theta_x)\grotimes_{C_0(X)} [D]\}_{x\in X} \\
&\ \ \ =\{(C_0(U_x)\otimes \Cl_+(T_xX),1_x,\vep^{-1}\Theta_x)\grotimes_{C_0(U_{2\vep})} [k_*]\grotimes_{C_0(X)} [D]\}_{x\in X}.
\end{align*}
It is nothing but a representative of $[\Theta_{U_\vep}]'\grotimes[k_*]\grotimes [D]$ by associativity of Kasparov product.
Similarly, commutativity of $(ii)$ and $(iv)$ can be easily checked and we leave it to the reader.


$(vi)$ Let $[D]\in KK_{S^1}(C(M ),\bb{C})$. 
Since 
$$[\Theta_{M }]\grotimes [e^{-1}_{U_\vep}]\grotimes[D]
=\bbra{(C_0(U_x)\otimes \Cl_+(T_xX),1_x,\vep^{-1}\Theta_x)\grotimes [e^{-1}_{U_\vep}]
\grotimes_{C_0(X)} [D]}_{x\in U_\vep},$$
we may consider the following instead of it:
$$\bbra{(C_0(U_x)\grotimes \Cl_+(T_xX),1_x,\vep^{-1}\Theta_x)\grotimes_{C_0(U_x)} [e^{-1}_{U_\vep}|_{U_x}]
\grotimes_{C_0(U_x)} [D|_{U_x}]}_{x\in U_\vep}.$$
Then, by the exponential mapping, we can canonically trivialize $[e^{-1}_{U_\vep}|_{U_x}]$, and we can construct an isomorphism between the above and
$$\bbra{(C_0(U_x)\grotimes e^{-1}_{U_\vep}|_{x}\grotimes \Cl_+(T_xX),1_x,\vep^{-1}\Theta_x)
\grotimes_{C_0(X)} [D|_{U_x}]}_{x\in U_\vep}
=[e^{-1}_{U_\vep}]\grotimes[\Theta_{M }]\grotimes [D].$$
Similarly, commutativity of $(vii)$ can be easily checked.


$(v)$ Since
$[\ca{B}_{TU_\vep}|_{M}]
=[\ca{B}_{TM}]\grotimes_{C(M )}[\ca{B}_{\nu(M )}]$, we have
\begin{align*}
[\ca{B}_{TU_\vep}|_{M}]\grotimes [d^{Spin^c}_{\nu(M )}]
&=[\ca{B}_{TM}]\grotimes[\ca{B}_{\nu(M )}]\grotimes [d_{\nu(M )}]\grotimes [S^*_{\nu(M )}] \\
&=[\ca{B}_{TM}]\grotimes[S^*_{\nu(M )}],
\end{align*}
the square $(v)$ commutes.

Commutativity of $(viii)$ is deduced from Theorem \ref{thm AS} and Theorem \ref{thm inv ind thm}.

$(iii)$ It is the most complicated.
We compute $j^*([\Theta_{U_\vep}]')\grotimes[S_{\nu(M )}^*]$ and $[\Theta_{M }]\grotimes [\tau_{U_{2\vep}}]$. 
The essential point is that the following are homeomorphic:
a ``$\dim(X)$-ball'' and ``the direct product of a $\dim(M )$-ball and a $\rank(\nu(M ))$-ball'', where the notion ``$n$-ball'' stands for an $n$-dimensional ball.

In order to distinguish $\Theta_x$'s appearing in the two local Bott elements, we denote that for $U_{2\vep}$ by $\Theta^{X}_x$ and that for $M $ by $\Theta_x^{M}$.
In order to distinguish $\vep$-neighborhoods appearing in the two local Bott elements, we denote that for $U_{2\vep}$ by $U_x$ and that for $M $ by $V_x$. 
Since $U_{2\vep}$ has a structure of a fiber bundle, 
$U_{2\vep}|_{V_x}=\varpi^{-1}(V_x)$ makes sense.
It is diffeomorphic to the direct product of the $\vep$-neighborhood of $x$ in $M $ and a ball of radius $2\vep$ centered at the origin of $\nu(M )|_x$.
The situation is summarized as follows:
$$\xymatrix{
X 
&U_{2\vep}
 \ar@{_{(}->}[l] & 
M  
 \ar@{_{(}->}[l] \\
&U_{2\vep}|_{V_x}
 \ar^{\varpi}@{->>}[rd]
 \ar@{_{(}->}[u] \\
&U_{x}
 \ar@{_{(}->}[u] &
V_x.
 \ar@{_{(}->}[l]
 \ar@{_{(}->}[uu] }.$$

As a preliminary, we prepare a more convenient representative of $[\tau_{U_{2\vep}}]$
$$\bra{C_0(U_{\vep},\bigwedge^*\varpi^*\nu_{\bb{C}}(M )),\pi,\frac{C^{\rm fib}}{\sqrt{\vep^2-(C^{\rm fib})^2}}},$$
instead of the natural one.

Note that $[\Theta_X]$ is represented by, in the bounded picture,
$$\bbra{\bra{C_0(U_x)\grotimes \Cl_+(T_xX),1_x,\frac{\Theta^X_x}{\sqrt{\vep^2-(\Theta^X_x)^2}}}}_{x\in X}.$$

First, we compute $[\Theta_{M }]\grotimes [\tau_{U_{2\vep}}]$ using the above representatives.
In the family description for $\ca{R}KK$-theory, the Kasparov product $[\Theta_{M }]\grotimes [\tau_{U_{2\vep}}]$ is given by
$$\bbra{\bra{
C_0(U_{\vep}|_{V_x},\bigwedge^*\nu_{\bb{C}}(M ))\grotimes \Cl_+(T_xM ),
1_x,{\frac{C^{\rm fib}_x}{\sqrt{\vep^2-(C^{\rm fib}_x)^2}}+\frac{\Theta_x^{M}}{\sqrt{\vep^2-(\Theta_x^{M})^2}}}}}_{x\in M }.$$

%

Second, we modify the above operator. 
We consider two ``bounded transformations''.
Let $\fra{b}(x):=\frac{x}{\sqrt{1+x^2}}$.
Let $\rho$ be a function on $\bb{R}$ given by
$$\rho(x)=\begin{cases}
\frac{x}{\vep} &(|x|\leq \vep), \\
\frac{x}{|x|} &(|x|>\vep).
\end{cases}$$
Let $F:=\fra{b}\bra{\frac{C^{\rm fib}_x}{\sqrt{\vep^2-(C^{\rm fib}_x)^2}}+\frac{\Theta_x^{M}}{\sqrt{\vep^2-(\Theta_x^{M})^2}}}$.
We prove that 
$\bbra{
C_0(U_{\vep}|_{V_x},\bigwedge^*\nu_{\bb{C}}(M ))\grotimes \Cl_+(T_xM ),
1_x,F}_{x\in M }$
and $\bbra{
C_0(U_{\vep}|_{V_x},\bigwedge^*\nu_{\bb{C}}(M ))\grotimes \Cl_+(T_xM ),
1_x,\rho(\Theta^X_x)}_{x\in M }$ are operator homotopic.
We construct a homotopy $F_s$ of Clifford algebra-valued function on $U_{\vep}|_{V_x}$. 
Let $f_s:=(1-s)F+s\vep^{-1}\Theta^X_x$ and $F_s:=\rho(f_s)$.
We prove that $F_s$ gives an operator homotopy.
Since $F_s$ gives a left multiplication with a Clifford algebra-valued function, it commutes with the right multiplication. 
It is clear that $F_s$ is $S^1$-equivariant.
Thus, what we essentially need to prove is that $F_s^2-1$ vanishes on the boundary of $U_{\vep}|_{V_x}$.

Let us study $f_s$. Since $\Theta^X_x=C^{\rm fib}_x+\Theta_x^{M}$,
\begin{align*}
f_s&=\bra{\bra{1-s}
\bra{1+\frac{(C^{\rm fib}_x)^2}{\vep^2-(C^{\rm fib}_x)^2}+\frac{(\Theta_x^{M})^2}{\vep^2-(\Theta_x^{M})^2}}^{-1/2}\frac{1}{\sqrt{\vep^2-(C^{\rm fib}_x)^2}}+s\vep^{-1}}C^{\rm fib}_x \\
&\ \ \ \ \ \ +
\bra{\bra{1-s}
\bra{1+\frac{(C^{\rm fib}_x)^2}{\vep^2-(C^{\rm fib}_x)^2}+\frac{(\Theta_x^{M})^2}{\vep^2-(\Theta_x^{M})^2}}^{-1/2}\frac{1}{\sqrt{\vep^2-(\Theta_x^{M})^2}}+s\vep^{-1}}\Theta_x^{M}.
\end{align*}
We denote it by $f_s=A_sC^{\rm fib}_x+B_s\Theta_x^{M}$.
We need to check that $F_s^2-1=\rho(f_s)^2-1=\rho(f_s^2-1)$ vanishes on the boundary.
Note that $f_s^2$ is scalar-valued.
Thus, thanks to the definition of $\rho$, it suffices to prove that $f_s^2\geq 1$ on the boundary.
Since $f_s^2=A_s^2(C^{\rm fib}_x)^2+B_s^2(\Theta_x^{M})^2$ and $F_0=F$, it suffices to check that $A_s\geq A_0$ and $B_s\geq B_0$, that is to say,
\begin{align*}
&\bra{1-s}
\bra{1+\frac{(C^{\rm fib}_x)^2}{\vep^2-(C^{\rm fib}_x)^2}+\frac{(\Theta_x^{M})^2}{\vep^2-(\Theta_x^{M})^2}}^{-1/2}\frac{1}{\sqrt{\vep^2-(C^{\rm fib}_x)^2}}+s\vep^{-1} \\
&\ \ \ \ \ \ \geq 
\bra{1+\frac{(C^{\rm fib}_x)^2}{\vep^2-(C^{\rm fib}_x)^2}+\frac{(\Theta_x^{M})^2}{\vep^2-(\Theta_x^{M})^2}}^{-1/2}\frac{1}{\sqrt{\vep^2-(C^{\rm fib}_x)^2}}
\end{align*}
\begin{align*}
&\bra{1-s}
\bra{1+\frac{(C^{\rm fib}_x)^2}{\vep^2-(C^{\rm fib}_x)^2}+\frac{(\Theta_x^{M})^2}{\vep^2-(\Theta_x^{M})^2}}^{-1/2}\frac{1}{\sqrt{\vep^2-(\Theta_x^{M})^2}}+s\vep^{-1} \\
&\ \ \ \ \ \ \geq 
\bra{1+\frac{(C^{\rm fib}_x)^2}{\vep^2-(C^{\rm fib}_x)^2}+\frac{(\Theta_x^{M})^2}{\vep^2-(\Theta_x^{M})^2}}^{-1/2}\frac{1}{\sqrt{\vep^2-(\Theta_x^{M})^2}}
\end{align*}
on the boundary. 
We discuss only the former one.
The above inequality is equivalent to
$$\vep^{-1}\geq 
\bra{1+\frac{(C^{\rm fib}_x)^2}{\vep^2-(C^{\rm fib}_x)^2}+\frac{(\Theta_x^{M})^2}{\vep^2-(\Theta_x^{M})^2}}^{-1/2}\frac{1}{\sqrt{\vep^2-(C^{\rm fib}_x)^2}}$$
on the boundary.
It is clear from a direct calculation.


Finally, we prove that the diagram commutes.
$j^*([\Theta_{U_\vep}]')\grotimes[S_{\nu(M )}^*]$
is given by
$$\bbra{\bra{C_0(U_x,\wedge^*\varpi^*\nu_{\bb{C}}(M ))\grotimes \Cl_+(T_xM ),1_x,\vep^{-1}\Theta^X_x}}_{x\in M }.$$
Let us construct a homotopy between these Kasparov modules.
Let 
$$E_x:=\bbra{e:[0,1]\to C_0(U_{\vep}|_{V_x},\bigwedge^*\varpi^*\nu_{\bb{C}}(M ))\grotimes \Cl_+(T_xM )\ \midd \ e(1)\text{ vanishes on }U_{\vep}|_{V_x}\setminus U_x}.$$
Then, an $\ca{R}KK_{S^1}$-cycle,
$$\bbra{E_x,1_x,\rho(\Theta_x^X)}_{x\in M }$$
gives a homotopy between the two $\ca{R}KK_{S^1}$-cycles. This is because the restriction of $\rho(\Theta_x^X)$ to $C_0(U_x,\wedge^*\nu_{\bb{C}}(M ))\grotimes \Cl_+(T_xM )$ is $\vep^{-1}\Theta^X_x$.
\end{proof}

This proposition tells us that the localized index can be computed by the composition of the right vertical arrows.
We call it the {\bf topological localized index}.

\begin{dfn}
$t-\ind^\pos_{S^1}:\ca{R}KK_{S^1}(X;C_0(X),C_0(TX))\to \bb{Q}\otimes R(S^1)_\pos$ is defined by the composition
$$\ca{R}KK_{S^1}(X;C_0(X),C_0(TX))
 \xrightarrow{(Tk)^*}
\ca{R}KK_{S^1}(U_{\vep};C_0(U_{\vep}),C_0(TU_{\vep}))
 \xrightarrow{(Tj)^*} $$
$$\ca{R}KK_{S^1}(M ;C(M ),C_0(TU_{\vep}|_{M })) 
 \xrightarrow{-\grotimes[d^{Spin^c}_{\nu(M )}]}
\ca{R}KK_{S^1}(M ;C(M ),C_0(TM ))$$
$$
\xrightarrow{[e^{-1}_{U_{\vep}}]\grotimes-}
\ca{R}KK_{S^1}(M ;C(M ),C_0(TM ))_\pos
 \xrightarrow{t-\ind_{M }}
\bb{Q}\otimes R(S^1)_\pos.
$$
\end{dfn}

Now the following is clear.

\begin{thm}\label{fpt for non cpt}
For $[D]\in KK_{S^1}(C_0(X),\bb{C})$, we have $\ind_{S^1}^\pos([D])=
t-\ind_{S^1}^\pos(\sigma(D)).$
\end{thm}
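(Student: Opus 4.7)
The plan is to read the theorem directly off the big commutative diagram (\ref{Big diagram cpt}) that has just been established, combined with Theorem \ref{thm inv ind thm}. So there is essentially no new work; the content is a careful bookkeeping of what the two compositions compute.

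First I would observe that the left-hand vertical composition of (\ref{Big diagram cpt}) — namely
$$[k_*]\grotimes-,\ [\tau_{U_{2\vep}}]\grotimes-,\ [e^{-1}_{U_{2\vep}}]\grotimes-,\ [\ud{\bb{C}}_{M}]\grotimes-$$
— is, by construction, the homomorphism $\ind_{S^1}^\pos$ of Section \ref{Non-compact manifolds}. Hence starting from $[D]\in KK_{S^1}(C_0(X),\bb{C})$ at the top-left corner and descending the left column produces $\ind_{S^1}^\pos([D])\in R(S^1)_\pos$, which then maps into $R(S^1)_\pos\otimes \bb{Q}$ by the bottom horizontal arrow.

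Next I would trace $[D]$ along the top row and then down the right column. Theorem \ref{thm inv ind thm} identifies the top horizontal composition
$$[\Theta_X]\grotimes [D]\ \mapsto\ \PD_X([D])\grotimes [\ca{B}_{TX}]$$
with the symbol element $[\sigma_D]\in \ca{R}KK_{S^1}(X;C_0(X),C_0(TX))$. The remaining right-column arrows are precisely
$$(Tk)^*,\ (Tj)^*,\ -\grotimes[d^{Spin^c}_{\nu(M)}],\ [e^{-1}_{U_\vep}]\grotimes-,\ t-\ind_{M},$$
which is the definition of $t-\ind_{S^1}^\pos$. So the right-column output starting from $[D]$ is $t-\ind_{S^1}^\pos([\sigma_D])$.

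By commutativity of (\ref{Big diagram cpt}), which was proved square-by-square in the preceding proposition, these two outputs agree in $\bb{Q}\otimes R(S^1)_\pos$, giving the desired equality $\ind_{S^1}^\pos([D])=t-\ind_{S^1}^\pos(\sigma(D))$. There is no genuine obstacle at this step — all the real work is already absorbed into the commutativity proof for the diagram and into Theorem \ref{thm inv ind thm}; the only thing one needs to be careful about is the implicit identification of $R(S^1)_\pos$ with its image in $R(S^1)_\pos\otimes \bb{Q}$, so that equality of the two outputs in the rational group already forces equality in $R(S^1)_\pos$ whenever $\ind_{S^1}^\pos([D])$ is integral. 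This is automatic from the construction since the bottom horizontal arrow is the canonical map $R(S^1)_\pos\to R(S^1)_\pos\otimes \bb{Q}$.
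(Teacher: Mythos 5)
Your proof is correct and follows exactly the paper's own route: the paper also derives Theorem \ref{fpt for non cpt} immediately from the commutativity of diagram (\ref{Big diagram cpt}) (which the preceding proposition establishes square-by-square), noting that the left column is by definition $\ind_{S^1}^\pos$ and the top row composed with the right column computes $t-\ind_{S^1}^\pos(\sigma(D))$ via Theorem \ref{thm inv ind thm}. Your closing remark on injectivity of $R(S^1)_\pos\to R(S^1)_\pos\otimes\bb{Q}$ is a harmless and accurate addition, but the paper simply treats the equality as taking place in the rational group.
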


\subsubsection{$K$-oriented cases}\label{K-ori noncpt}

When $X$ is $K$-oriented, we can give a more convenient formula to compute $\ind_{S^1}^\pos$ by using the isomorphism
$$\ca{R}KK_{S^1}(X;C_0(X),C_0(TX))
\cong \ca{R}KK_{S^1}(X;C_0(X),C_0(X))
\cong K_{S^1}^0(X).$$

Let $S_X$ be an $S^1$-equivariant Spinor bundle of $X$. 
Since the normal bundle admits a complex structure, it has a natural $S^1$-equivariant Spinor bundle $\wedge^*{\nu_{\bb{C}}(M)}$.
Thus, $M $ is also of $K$-orientable.
Let $S_{M }$ be a Spinor bundle of $M $ and $L$ be a $\bb{Z}_2$-graded\footnote{A $\bb{Z}_2$-graded line bundle is a pair of a line bundle and a locally constant function taking values in $\bb{Z}_2$. This concept is explained in  \cite[Definition 2.1]{FHTI}.} complex line bundle $L$ so that $S_X|_{M }\cong S_{M }\grotimes \wedge^*\nu_{\bb{C}}(M )\grotimes L$.\footnote{
$S_X$ automatically gives a Spinor bundle of $M $ by $S_{M}'=\Hom_{\Cl(\nu(M ))}(\wedge^*\nu_{\bb{C}}(M ),S_X)$.
This Spinor satisfies a convenient relation $S_X\cong S_{M}'\otimes \wedge^*\nu_{\bb{C}}(M )$.
However, it is more convenient for computation to use favorite Spinor bundle over $M $.}
We call this line bundle the {\bf difference line bundle for $X$}.
The corresponding $\ca{R}KK$-element is denoted by
$$[L]\in \ca{R}KK_{S^1}(M ;C(M ),C(M )).$$
The dual bundle $L^*$ gives the inverse of $L$, namely $L\grotimes L^*\cong \ud{\bb{C}}_M$.

Let us rewrite Definition \ref{t ind for tri act} for a $K$-oriented manifold.

\begin{dfn}
Let $Y$ be a $K$-oriented manifold equipped with the trivial action of a compact Lie group $G$. 


$(1)$ We define $ch:\ca{R}KK_G(Y;C(Y),C(Y))\to H^*(Y;\bb{Q})\otimes R(G)$ by the composition
$$\ca{R}KK_G(Y;C(Y),C(Y))
\xrightarrow{\cong}
K(Y)\otimes R(G)
\xrightarrow{ch\otimes \id}
H^*(Y;\bb{Q})\otimes R(G).$$

$(2)$ We define $\displaystyle \int_{Y}:H^*(Y;\bb{Q})\otimes R(G)\to \bb{Q}\otimes R(G)$ by $\displaystyle \int_{Y} (u\otimes  \rho):=\bra{\int_{Y}u}\otimes \rho$
for $u\in H^*(Y;\bb{Q})$ and $\rho\in R(G)$.

$(3)$ We define 
$t-\ind_Y':\ca{R}KK_G(Y;C(Y),C(Y))\to \bb{Q}$ by
$$t-\ind_Y'(u):=\int_{Y}ch(u)\td(TY),$$
where $\td(TY)\in H^*(Y;\bb{Q})$ is the $Spin^c$-Todd class of $TY$. 
\end{dfn}

Then, $t-\ind_{M }'$ and $t-\ind_{M }$ are related in the following way.

\begin{lem}\label{K theory Thom}
For a compact $K$-oriented manifold $Y$, the following diagram commutes.
$$
\xymatrix{
K^0(TY) 
\ar^{-\grotimes[d^{Spin^c}_{TY}]}[rr]
\ar_{t-\ind_Y}[rd] &&
K^0(Y) 
\ar^{t-\ind_Y'}[ld] \\
& \bb{Q}. &}$$
\end{lem}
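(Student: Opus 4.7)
The vertical arrow $-\grotimes[d^{Spin^c}_{TY}]:K^0(TY)\to K^0(Y)$ is the inverse of the $K$-theoretic $Spin^c$-Thom isomorphism $\phi_K:=-\grotimes[\ca{B}^{Spin^c}_{TY}]$ by Definition~\ref{KK equiv K ori} and Theorem~\ref{Thom isom E}. Commutativity of the diagram is therefore equivalent to verifying
\[
t-\ind_Y(\phi_K(v))=t-\ind'_Y(v)\qquad\text{for every }v\in K^0(Y),
\]
which is what I will prove.

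The strategy is to translate this into a cohomological identity via the standard Atiyah--Hirzebruch compatibility of the Chern character with the Thom isomorphism: there is a Todd-type class $\ca{T}(TY)\in H^*(Y;\bb{Q})$, determined by the $Spin^c$-Thom convention, such that
\[
ch(\phi_K(v))=\phi_H\bigl(ch(v)\cdot\ca{T}(TY)^{-1}\bigr)\in H^*_c(TY;\bb{Q}),
\]
where $\phi_H:H^*(Y;\bb{Q})\to H^{*+\dim Y}_c(TY;\bb{Q})$ is the cohomological Thom isomorphism. Combining this with the projection formula $\phi_H(\alpha)\cdot\pi^*\beta=\phi_H(\alpha\beta)$ and the characteristic-class identity
\[
\td(TY\otimes\bb{C})=\hat{A}(TY)^{2}=\td(TY)\cdot\td(\overline{TY}),
\]
(obtained by writing the Chern roots of $TY\otimes\bb{C}$ as $\pm y_i$ and direct expansion), the expression $t-\ind_Y(\phi_K(v))$ becomes a cohomological integral over $Y$ of the form $\int_Y ch(v)\cdot\ca{T}(TY)^{-1}\td(TY)\td(\overline{TY})$, modulo orientation signs.

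The final step is to reconcile the two Todd classes and to track the orientation signs. The orientation on $TY$ used in the definition of $t-\ind_Y$ comes from $T(TY)|_Y\cong TY\otimes\bb{C}$; it differs from the natural base-times-$Spin^c$-fiber orientation by $(-1)^{\dim Y(\dim Y-1)/2}$. This sign, combined with the outer factor $(-1)^{\dim Y}$ in the definition of $t-\ind_Y$ and with the $(-1)^{\dim Y/2}$ sign distinguishing the Thom class of $TY$ from that of $\overline{TY}$, collectively absorb the discrepancy between $\td(TY)$ and $\td(\overline{TY})$. After pushforward via $\phi_H$, all corrections collapse to give
\[
t-\ind_Y(\phi_K(v))=\int_Y ch(v)\,\td(TY)=t-\ind'_Y(v).
\]

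The main obstacle will be the systematic bookkeeping of these orientation signs and of the $\td(TY)$ versus $\td(\overline{TY})$ conventions, both of which are highly sensitive to the precise normalization of the $Spin^c$-Thom class $[\ca{B}^{Spin^c}_{TY}]$ chosen in Definition~\ref{KK equiv K ori}. Once these are pinned down, the remaining computation is a routine manipulation at the level of Chern roots, and the lemma is in fact equivalent to the classical compatibility of the two standard forms of the Atiyah--Singer topological index formula on $K$-oriented manifolds.
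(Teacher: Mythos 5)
The paper states Lemma~\ref{K theory Thom} without proof, treating it as the standard reformulation of the Atiyah--Singer topological index formula for $K$-oriented manifolds (cf.\ Atiyah--Singer~III, or Lawson--Michelsohn, Chapter~III). Your route is the right one and is the only reasonable one: translate $-\grotimes[d^{Spin^c}_{TY}]$ into the inverse $K$-theoretic Thom isomorphism, use the Riemann--Roch compatibility of $ch$ with the $Spin^c$-Thom map, and reconcile $\td(TY\otimes\bb{C})$ with the $Spin^c$-Todd class $\td(TY)$ via the factorization $\td(TY\otimes\bb{C})=\hat{A}(TY)^2=\td(TY)\td(\overline{TY})$, where $\overline{TY}$ carries the conjugate $Spin^c$-structure. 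Your orientation sign $(-1)^{\dim Y(\dim Y-1)/2}$ for the discrepancy between the ``base-then-fiber'' orientation and the complex orientation on $T(TY)\cong TY\otimes\bb{C}$ is also correct.

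The genuine gap is that the proposal stops precisely where the proof begins. You never identify the correction class $\ca{T}(TY)$ in the Riemann--Roch formula $ch(\phi_K(v))=\phi_H(ch(v)\cdot\ca{T}(TY)^{-1})$; with the paper's convention $[\ca{B}^{Spin^c}_{E}]=[S_E]\grotimes[\ca{B}_E]$ from Definition~\ref{KK equiv K ori}, the Thom class is the Koszul complex of the spinor bundle and $\ca{T}(TY)$ comes out to be $\td(\overline{TY})$, but this needs to be verified against the exact normalization of $[\ca{B}_E]$ in Definition~\ref{Thom for general}. You also assert that the three signs you list ``collectively absorb the discrepancy'' rather than showing that $(-1)^{\dim Y}\cdot(-1)^{\dim Y(\dim Y-1)/2}\cdot(\text{sign from }\phi_H)$ actually cancels; since $\dim Y$ is even (as $Y$ is $K$-oriented), $(-1)^{\dim Y}=1$, but the remaining two signs must still be matched explicitly. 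In short: the outline is correct, and you are right that the lemma is equivalent to the classical compatibility of the two topological index formulas, but deferring the ``bookkeeping'' means the proposal is a citation rather than a proof — which, to be fair, is also how the paper treats this lemma.
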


We want to give a cohomology formula of the localized using the integration on $X$, not on $TX$.
For this aim, we consider the following diagram.
\begin{equation}\label{Big diagram for K ori}
\xymatrix{
\ca{R}KK_{S^1}(X;C_0(X),C_0(TX))
\ar^-{-\grotimes[d_{TX}^{Spin^c}]}[r]
\ar_-{(Tk)^*}[d] 
\ar@{}|-{(i)}[rd] &
\ca{R}KK_{S^1}(X;C_0(X),C_0(X))
\ar^-{k^*}[d] \\
\ca{R}KK_{S^1}(U_{\vep};C_0(U_\vep),C_0(TU_{\vep}))
\ar_-{-\grotimes[d_{TU_{\vep}}^{Spin^c}]}[r]
\ar_-{j^*}[d] 
\ar@{}|-{(ii)}[rdd] &
\ca{R}KK_{S^1}(U_{\vep};C_0(U_\vep),C_0(U_{\vep})) 
\ar^{j^*}[d] \\
\ca{R}KK_{S^1}(M ;C(M ),C_0(TU_{\vep}|_{M }))
\ar_-{-\grotimes[d^{Spin^c}_{\nu(M )}]}[d] & 
\ca{R}KK_{S^1}(M ;C(M ),C(M ))
\ar^-{[L^*]\grotimes-}[d] \\
%
\ca{R}KK_{S^1}(M ;C(M ),C_0(TM ))
\ar^-{-\grotimes[d_{TM }^{Spin^c}]}[r]
\ar_-{[ e^{-1}_{U_\vep}]\grotimes-}[d] 
\ar@{}|-{(iii)}[rd] &
\ca{R}KK_{S^1}(M ;C(M ),C(M ))
\ar^-{[ e^{-1}_{U_\vep}]\grotimes-}[d] \\
\ca{R}KK_{S^1}(M ;C(M ),C_0(TM ))_\pos
\ar_-{-\grotimes[d_{T{M }}^{Spin^c}]}[r]
\ar_-{t-\ind_{M }}[d] 
\ar@{}|-{(iv)}[rd] &
\ca{R}KK_{S^1}(M ;C(M ),C(M ))_\pos
\ar^-{t-\ind_{M }'}[d] \\
R(S^1)_\pos\otimes\bb{Q}
\ar_{\id}[r] &
R(S^1)_\pos\otimes\bb{Q}.
}
\end{equation}

The squares $(i)$ and $(iii)$ clearly commute.
The square $(iv)$ commutes thanks to Lemma \ref{K theory Thom}.

Let us prove that $(ii)$ commutes.
We compute the composition of the following: $(-\grotimes[d_{TU_\vep}^{Spin^c}])^{-1}$, $j^*$, $-\grotimes[d^{Spin^c}_{\nu(M )}]$ and $-\grotimes[d_{TM }^{Spin^c}]$, where the composition is a map from 
$\ca{R}KK_{S^1}(U_\vep;C_0(U_\vep),C_0(U_\vep))$ to 
$\ca{R}KK_{S^1}(M;C(M),C(M))$.
By Theorem \ref{Thom isom E} ad Definition \ref{KK equiv K ori}, for $[E]\in \ca{R}KK_{S^1}(U_\vep;C_0(U_\vep),C_0(U_\vep))$, $(-\grotimes[d_{TU_\vep}^{Spin^c}])^{-1}([E])$ is given by $[E]\grotimes [\ca{B}_{TU_\vep}^{Spin^c}]$.
By $S_X|_M\cong S_M\grotimes \wedge^*\nu_\bb{C}(M)^*\grotimes L$, we have 
$[\ca{B}_{TU\vep}^{Spin^c}]=
[\ca{B}_{\nu_\bb{C}(M)}^{Spin^c}]\grotimes 
[\ca{B}_{TM}^{Spin^c}]\grotimes [L]$.
Therefore, $[d_{TX}^{Spin^c}]=
[d_{\nu_\bb{C}(M)}^{Spin^c}]\grotimes 
[d_{TM}^{Spin^c}]\grotimes [L^*]$.
Since $[\ca{B}_{TU_\vep}^{Spin^c}]\grotimes [d_{TU_\vep}^{Spin^c}]=1$, we have
$$j^*([E]\grotimes [\ca{B}_{TU_\vep}^{Spin^c}])\grotimes
[d^{Spin^c}_{\nu(M )}]\grotimes
[d_{TM }^{Spin^c}]
=j^*([E])\grotimes [L^*].$$

Consequently, we have the following fixed-point formula for the localized index.
The composition of the right vertical arrows of the above commutative diagram is denoted by $(t-\ind_{S^1}^{\pos})'$.


\begin{thm}\label{fixed-point formula non cpt}
$(1)$ For $[u]\in \ca{R}KK_{S^1}(X;C_0(X),C_0(TX))$, 
$(t-\ind_{S^1}^{\pos})'([u]\grotimes [d_{TX}^{Spin^c}])
=t-\ind_{S^1}^{\pos}([u])$.

$(2)$ In particular, for an $S^1$-equivariant $\bb{Z}_2$-graded Hermite bundle $F$ over $X$ and a Dirac operator $D_F$ on $S_X\grotimes F$,
$$
\ind^{\pos}_{S^1}([D_F])=
\int_{M }\td(TM )ch\bra{F|_{M }\grotimes L\grotimes {\sum_{l\geq 0}\bra{
-\sum_{k\geq 1}(-1)^k\wedge^k \nu_\bb{C}(M )}^l}}.$$
\end{thm}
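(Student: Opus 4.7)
Part~(1) is essentially a restatement of the commutativity of diagram~(\ref{Big diagram for K ori}), all four of whose subsquares were checked in the paragraphs immediately preceding the statement: $t-\ind_{S^1}^\pos$ is the composition of the left column, $(t-\ind_{S^1}^\pos)'$ is the composition of the right column, and the top horizontal arrow is the Thom isomorphism $-\grotimes[d_{TX}^{Spin^c}]$. Chasing $[u]$ around the diagram therefore yields $(t-\ind_{S^1}^\pos)'([u]\grotimes[d_{TX}^{Spin^c}]) = t-\ind_{S^1}^\pos([u])$.

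For part~(2), I would combine three ingredients. First, Theorem~\ref{fpt for non cpt} gives $\ind_{S^1}^\pos([D_F]) = t-\ind_{S^1}^\pos([\sigma_{D_F}])$. Second, applying part~(1) rewrites this as $(t-\ind_{S^1}^\pos)'([\sigma_{D_F}]\grotimes[d_{TX}^{Spin^c}])$. Third, via the Thom isomorphism of Theorem~\ref{Thom isom E} and Definition~\ref{KK equiv K ori}, the class $[\sigma_{D_F}]\grotimes[d_{TX}^{Spin^c}]\in\ca{R}KK_{S^1}(X;C_0(X),C_0(X))$ is identified with the twisting bundle $[F]$, since the principal symbol of the Dirac operator on $S_X\grotimes F$ is the Clifford multiplication tensored trivially by $\id_F$. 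The computation therefore reduces to evaluating $(t-\ind_{S^1}^\pos)'([F])$ by tracing along the right column of diagram~(\ref{Big diagram for K ori}).

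That trace is routine: the restrictions $k^*$ and then $j^*$ send $[F]$ to $[F|_M]\in \ca{R}KK_{S^1}(M;C(M),C(M))$; the line-bundle twist (which in the diagram is $[L^*]\grotimes-$ and encodes the Spinor discrepancy $S_X|_M \cong S_M\grotimes \wedge^*\nu_\bb{C}(M)\grotimes L$) introduces the factor $L$ appearing in the statement; and $[e^{-1}_{U_\vep}]\grotimes-$ multiplies by the inverse Euler class in $R(S^1)_\pos$. The last arrow $t-\ind_M'$ is, by Lemma~\ref{K theory Thom}, the classical topological index of the compact $K$-oriented manifold $M$, namely $[G]\mapsto \int_M \td(TM)\,ch(G)$.

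The main remaining task---and the principal obstacle---is to identify $[e^{-1}_{U_\vep}]$ explicitly with the displayed power series. By Lemma~\ref{lem comp of Eul}(1), the Euler class represents the $K$-class of $\bigwedge^*\nu_\bb{C}(M) = \sum_{k\geq 0}(-1)^k\wedge^k\nu_\bb{C}(M)$, and by Lemma~\ref{cplx str of normal} together with the decomposition $\nu_\bb{C}(M)\cong\bigoplus_{n>0}E_n\otimes\bb{C}_n$ every nontrivial summand lies in strictly positive $S^1$-weight. Writing $[e_{U_\vep}] = 1 - \bigl(-\sum_{k\geq 1}(-1)^k\wedge^k\nu_\bb{C}(M)\bigr)$ and applying the Neumann-series argument of Lemma~\ref{fps invertible} inverts it termwise to $\sum_{l\geq 0}\bigl(-\sum_{k\geq 1}(-1)^k\wedge^k\nu_\bb{C}(M)\bigr)^l$. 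Multiplicativity of the Chern character and integration over $M$ then assemble everything into the cohomological formula stated.
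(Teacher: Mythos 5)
Your proof is correct and matches the paper's argument: the paper states the theorem as an immediate consequence (``Consequently'') of the commutativity of diagram~(\ref{Big diagram for K ori}), and you simply unwind the chase, using Theorem~\ref{fpt for non cpt}, the standard identification $[\sigma_{D_F}]\grotimes[d_{TX}^{Spin^c}]=[F]$, and the Neumann-series inversion of the Euler class from Lemma~\ref{fps invertible}. One caution you inherit from the source: the right column of diagram~(\ref{Big diagram for K ori}) applies $[L^*]\grotimes-$, whereas the theorem's displayed formula has $L$ rather than $L^*$, so your phrase ``introduces the factor $L$'' quietly elides this duality mismatch rather than resolving it.
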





\begin{ex}
Let $X=\bb{C}$ be a complex plane on which $S^1$ linearly acts with non-zero weight $k$, namely $e^{i\theta}\cdot z=e^{ik\theta}z$.
The fixed-point is only the origin: $M=\{O\}$.
We use the complex structure in order to define the Spinor bundle: $S_X=\wedge^* TX_\bb{C}$. 
The normal bundle is $\nu_\bb{C}(M )=T_OX=\bb{C}_{|k|}$.
Recall that we choose the complex structure on the normal bundle so that the weight is positive.
Thus, the difference line bundle for $X$ is given as follows
$$L=\begin{cases}
 q^0 & (k>0), \\
- q^{|k|} & (k<0). \end{cases}$$
Obviously $\td(TM )=1$. 

Let $[D]\in KK_{S^1}(C_0(X),\bb{C})$ be a $K$-homology element represented by a Dirac operator $D$ acting on a Clifford bundle $W$. Since $X$ is of $K$-oriented, it can be written as 
$F\grotimes S_X$.
Thanks to Lemma \ref{KH for trivial action} and the fact that $K^0(X)\cong \bb{Z}$, $F$ can be written as
$\sum_n c_n q^n$ for $c_n\in \bb{Z}$.


Let us compute the localized index of $[D]$.
If $k>0$,
\begin{align*}
\ind_{S^1}^{\pos}([D])
&=\sum_n c_n q^n{\sum_{l\geq 0}\bra{
-\sum_{m\geq 1}(-1)^m\wedge^m \nu_\bb{C}(M )}^l} \\
&=\sum_n c_n q^n\sum_{l\geq 0}{
 q^{kl}} \\
&=\sum_nc_n\bra{ q^{n}+ q^{n+k}+ q^{n+2k}+\cdots}.
\end{align*}

By the same argument, if $k<0$, the index is given by
$$-q^k\sum_nc_n\bra{ q^{n}+ q^{n+|k|}+ q^{n+2|k|}+\cdots}.$$

For example, if $F=\ud{\bb{C}}_{X}$ and $k=1$, the localized index is given by
$$ q^0+ q^1+ q^{2}+ q^{3}+\cdots.$$
\end{ex}

\subsection{A reformulation}\label{reform of fpt}

In the arguments so far, we have used several $C^*$-algebras which do not make sense for infinite-dimensional manifolds.
In order to construct an infinite-dimensional analogue of the localized index and the fixed-point formula, at least, we need to reformulate our index.
Concretely, we will replace a single $C^*$-algebra $C_0(X)$ for an $K$-oriented manifold $X$ with the graded suspension of $Cl_\tau(X)$, and we will replace $C_0(X)$-$C^*$-algebras (which we regard as families of $C^*$-algebras over $X$) $Cl_\tau(X)$ and $C_0(TX)$ with the graded suspension of $C_0(X)$.
See \cite{T5} for details.

\subsubsection{Analytic side}

In order to reformulate the localized index, we prepare several $KK$-elements for the situation of Section \ref{Non-compact manifolds}.
Recall that the pair $[S_X]$ and $[S_X^*]$ give a $KK$-equivalent between $C_0(X)$ and $Cl_\tau(X)$.
Similarly, $[S_{U_{2\vep}}]$, $[S_{U_{2\vep}}^*]$, $[S_M]$, $[S_M^*]$, $[S_{\nu_{\bb{C}}(M)}]$ and $[S_{\nu_{\bb{C}}(M)}^*]$ are used to reformulate the theorem in the following.


\begin{dfn}
$(1)$ $[k_*]':=[S_{U_{2\vep}}^*]\grotimes [k_*]\grotimes [S_X]\in KK_{S^1}(Cl_\tau(U_{2\vep}),Cl_\tau(X))$.
It is given by the zero-extension $Cl_\tau(U_\vep)\to Cl_\tau(X)$.


$(2)$ $[\tau_{U_{2\vep}}]':=[S_M^*]\grotimes [\tau_{U_{2\vep}}]\grotimes [S_{U_{2\vep}}]
\in KK_{S^1}(Cl_\tau(M ),Cl_\tau(U_{2\vep}))$.
It is given by
$${\bra{Cl_\tau(U_{2\vep}),
\pi,
\frac{C^{\rm fib}}{\sqrt{4\vep^2-\bra{C^{\rm fib}}^2}}}}.$$



$(3)$ $[e^{-1}_{U_{2\vep}}]':=[S_M^*]\grotimes [e^{-1}_{U_{2\vep}}]\grotimes [S_M]
\in KK_{S^1}(Cl_\tau(M ),Cl_\tau(M))$.
If $[e^{-1}_{U_{2\vep}}]$ is given by $\sum_{n}E_n\otimes  q^n$ under the isomorphism 
$\ca{R}KK_{S^1}(M ;C(M ),C(M ))_\pos \cong K^0(M )\otimes R(S^1)_\pos$, $[e^{-1}_{U_\vep}]'$ is given by
$$\sum_{n}\bra{E_n\grotimes \Cl_+(TM )}\otimes  q^n.$$
Note that $S^1$ acts on $\Cl_+(TM )$ trivially.


$(4)$ $[\ud{\bb{C}}_{M }]':=[\ud{\bb{C}}_{M }]\grotimes [S_M]\in KK_{S^1}(\bb{C},Cl_\tau(M ))$. It is given by
$$\bra{C(M ,S),1,0}.$$

$(5)$ $[L^*]':=[S_M^*]\grotimes [L^*]\grotimes [S_M]
\in KK_{S^1}(Cl_\tau(M ),Cl_\tau(M ))$. It is given by $[L^*]':=[L^*\grotimes \Cl_+(TM )]$.
\end{dfn}

\begin{dfn}
We define the following $KK$-elements:
$[\widetilde{k_*}]
:=\sigma_{\ca{S}_\vep}([k_*]')$,
$[\widetilde{\tau_{U_{2\vep}}}]
:=\sigma_{\ca{S}_\vep}([\tau_{U_{2\vep}}]')$,
$[\widetilde{e^{-1}_{U_{2\vep}}}]
:=\sigma_{\ca{S}_\vep}([e^{-1}_{U_{2\vep}}]')$ and
$[\widetilde{L^*}]
:=\sigma_{\ca{S}_\vep}([L^*]')$.
\end{dfn}

The following is clear from associativity of Kasparov product and the property of the homomorphism $\sigma_{\ca{S}_\vep}$.

The following construction will be generalized to Hilbert manifolds in the next section.

\begin{dfn}
We define $\ca{A}(X):=\ca{S}_\vep\grotimes Cl_\tau(X)$.
Similarly, we define $\ca{A}(U_{2\vep})$ and $\ca{A}(M)$.
\end{dfn}


\begin{lem}
The following diagram commutes.

$$\xymatrix{
KK_{S^1}(C_0(X),\bb{C})
 \ar^{[S_{X}^*]\grotimes-}[r]
 \ar_{[k_*]\grotimes -}[d] &
KK_{S^1}(Cl_\tau(X),\bb{C})
 \ar^{\sigma_{\ca{S}_\vep}}[r]
 \ar|{[k_*]'\grotimes -}[d] &
KK_{S^1}(\ca{A}(X),\ca{S}_\vep) 
 \ar^{[\widetilde{k_*}]\grotimes -}[d] \\
KK_{S^1}(C_0(U_{2\vep}),\bb{C})
 \ar^{[S_{U_{2\vep}}^*]\grotimes-}[r]
 \ar_{[\tau_{U_{2\vep}}]\grotimes -}[dd] &
KK_{S^1}(Cl_\tau(U_{2\vep}),\bb{C})
 \ar^{\sigma_{\ca{S}_\vep}}[r]
 \ar|{[\tau_{U_{2\vep}}]'\grotimes -}[d] &
KK_{S^1}(\ca{A}(U_{2\vep}),\ca{S}_\vep)
 \ar^{[\widetilde{\tau_{U_{2\vep}}}]\grotimes -}[d] \\
&
KK_{S^1}(Cl_\tau(U_{2\vep}),\bb{C})
 \ar^{\sigma_{\ca{S}_\vep}}[r]
 \ar|{[L^*]'\grotimes-}[d] &
KK_{S^1}(\ca{A}(U_{2\vep}),\ca{S}_\vep)
 \ar^{[\widetilde{L^*}]\grotimes-}[d] \\
KK_{S^1}(C(M ),\bb{C})
 \ar^{[S_{M }^*]\grotimes-}[r]
 \ar_{[e^{-1}_{U_{2\vep}}]\grotimes-}[d] &
KK_{S^1}(Cl_\tau(M ),\bb{C})
 \ar^{\sigma_{\ca{S}_\vep}}[r]
 \ar|{[e^{-1}_{U_{2\vep}}]'\grotimes-}[d] &
KK_{S^1}(\ca{A}(M ),\ca{S}_\vep) 
 \ar^{[\widetilde{e^{-1}_{U_{2\vep}}}]\grotimes-}[d] \\
KK_{S^1}(C(M ),\bb{C})_\pos
 \ar^{[S_{M }^*]\grotimes-}[r]
 \ar_{[\ud{\bb{C}}_{M }]\grotimes-}[d] &
KK_{S^1}(Cl_\tau(M ),\bb{C})_\pos
 \ar^{\sigma_{\ca{S}_\vep}}[r]
 \ar|{[\ud{\bb{C}}_{M }]'\grotimes-}[d] &
KK_{S^1}(\ca{A}(M ),\ca{S}_\vep)_\pos 
 \ar^{[\widetilde{\ud{\bb{C}}_{M }}]\grotimes-}[d]  \\
R(S^1)_\pos 
 \ar^=[r] &
R(S^1)_\pos
 \ar^{\sigma_{\ca{S}_\vep}}[r] &
KK_{S^1}(\ca{S}_\vep,\ca{S}_\vep)_\pos.}$$

\end{lem}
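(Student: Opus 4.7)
The plan is to decompose the diagram into two vertical strips and verify each square separately; everything will follow formally from associativity of the Kasparov product together with the product-preserving nature of $\sigma_{\ca{S}_\vep}$.

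For the left strip (columns one and two, joined by the horizontal arrows $-\grotimes[S_\cdot^*]$), the primed vertical classes are defined precisely as sandwiches of the original vertical classes by the Spinor $KK$-equivalences, for instance $[k_*]'=[S_{U_{2\vep}}^*]\grotimes[k_*]\grotimes[S_X]$. Commutativity of the top square then reduces to
\begin{equation*}
[k_*]'\grotimes[S_X^*]\grotimes[D]=[S_{U_{2\vep}}^*]\grotimes[k_*]\grotimes[S_X]\grotimes[S_X^*]\grotimes[D]=[S_{U_{2\vep}}^*]\grotimes[k_*]\grotimes[D],
\end{equation*}
upon applying $[S_X]\grotimes[S_X^*]=\bm{1}_{C_0(X)}$. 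The same cancellation trivialises every square in this strip; the rows that pass through $[L^*]$ additionally use the identification $S_X|_M\cong S_M\grotimes\wedge^*\nu_\bb{C}(M)\grotimes L$, which controls how the Spinor classes over $X$, $M$ and the normal bundle are related (this is the same input used in verifying square (ii) of Diagram (\ref{Big diagram for K ori})).

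For the right strip (columns two and three, joined by the horizontal arrows $\sigma_{\ca{S}_\vep}$), each tilde class is defined by $[\widetilde{k_*}]:=\sigma_{\ca{S}_\vep}([k_*]')$, and similarly for the other arrows. Commutativity of each such square is immediate from the multiplicativity of $\sigma_{\ca{S}_\vep}$:
\begin{equation*}
[\widetilde{k_*}]\grotimes\sigma_{\ca{S}_\vep}(\alpha)=\sigma_{\ca{S}_\vep}([k_*]')\grotimes\sigma_{\ca{S}_\vep}(\alpha)=\sigma_{\ca{S}_\vep}([k_*]'\grotimes\alpha)
\end{equation*}
for $\alpha$ in the appropriate middle-column group. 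The bottom two rows involve passage to the positive-energy completion $-\otimes_{R(S^1)}R(S^1)_\pos$, but this is a functorial operation and commutes with every horizontal and vertical arrow present.

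There is no serious obstacle beyond careful bookkeeping: one must verify that each Spinor-class cancellation occurs in the correct bimodule slot, and that the Clifford-linear twists in $[L^*]'$, $[e^{-1}_{U_{2\vep}}]'$ and $[\ud{\bb{C}}_M]'$ do not obstruct the cancellations. Once the pattern is established for one square in each strip, every other square follows by the same manipulation.
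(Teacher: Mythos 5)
Your overall plan — decomposing the diagram into a "left strip" connecting the $C_0(\cdot)$-picture with the $Cl_\tau(\cdot)$-picture via $[S_\bullet^*]$, and a "right strip" handled by multiplicativity of $\sigma_{\ca{S}_\vep}$ — matches the paper's approach; the paper itself offers only the one-line remark that the lemma "is clear from associativity of Kasparov product and the property of the homomorphism $\sigma_{\ca{S}_\vep}$", so your proposal is if anything more explicit. The computation you display for the $[k_*]$-square and the argument for the generic right-strip square via $\sigma_{\ca{S}_\vep}([x]'\grotimes\alpha)=\sigma_{\ca{S}_\vep}([x]')\grotimes\sigma_{\ca{S}_\vep}(\alpha)$ are both correct, and the observation that passing to $-\otimes_{R(S^1)}R(S^1)_\pos$ is functorial and hence harmless is right.

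The one piece you gloss over is actually the only genuinely nontrivial cell: the rectangle joining the left column's $[\tau_{U_{2\vep}}]\grotimes-$ (which jumps two rows and has no counterpart to the $[L^*]$-arrow) with the middle column's composition $[L^*]'\grotimes\bigl([\tau_{U_{2\vep}}]'\grotimes-\bigr)$. If you apply only the cancellation $[S_\bullet]\grotimes[S_\bullet^*]=\bm{1}$ to the sandwich formulae $[\tau_{U_{2\vep}}]'=[S_M^*]\grotimes[\tau_{U_{2\vep}}]\grotimes[S_{U_{2\vep}}]$ and $[L^*]'=[S_M^*]\grotimes[L^*]\grotimes[S_M]$, the middle-column path yields $[S_M^*]\grotimes[L^*]\grotimes[\tau_{U_{2\vep}}]\grotimes[D]$ while the left-column path yields $[S_M^*]\grotimes[\tau_{U_{2\vep}}]\grotimes[D]$; a stray $[L^*]$ survives, and the cancellation alone does not remove it. Your aside about "additionally using the identification $S_X|_M\cong S_M\grotimes\wedge^*\nu_\bb{C}(M)\grotimes L$" names the right ingredient but stops short: one must track precisely which Spinor bundle of $M$ is implicit in the concrete cycle $\bigl(Cl_\tau(U_{2\vep}),\pi,C^{\rm fib}/\sqrt{4\vep^2-(C^{\rm fib})^2}\bigr)$ representing $[\tau_{U_{2\vep}}]'$, so that the $[L]$-twist is absorbed into $[\tau_{U_{2\vep}}]'$ rather than appearing as an extra factor, and the rectangle closes. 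Without that accounting, the blanket claim that "the same cancellation trivialises every square in this strip" is too strong for the rows that involve $[L^*]$.
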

\begin{dfn}
We define the reformulated localized index 
$$\widetilde{\ind_{S^1}^\pos}:KK_{S^1}(\ca{A}(X),\ca{S}_\vep)\to KK_{S^1}(\ca{S}_\vep,\ca{S}_\vep)_\pos$$ 
by the composition of the right vertical arrows.
\end{dfn}

Then, the following is obvious.
\begin{thm}
For $[D]\in KK_{S^1}(C_0(X),\bb{C})$, we have
$$\sigma_{\ca{S}_\vep}(\ind_{S^1}^\pos([D]))=
\widetilde{\ind_{S^1}^\pos}(\sigma_{\ca{S}_\vep}([S_X^*]\grotimes [D])).$$

\end{thm}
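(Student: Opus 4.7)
The plan is to simply chase the large commutative diagram established in the preceding lemma. The reformulated index $\widetilde{\ind_{S^1}^\pos}$ is \emph{by definition} the composition of the rightmost column of that diagram, and the original localized index $\ind_{S^1}^\pos$ is \emph{by definition} the composition of the leftmost column. Thus the statement reduces to commutativity of the outer rectangle of that diagram composed with the bottom horizontal arrow $\sigma_{\ca{S}_\vep}$.

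Concretely, I would start with $[D]\in KK_{S^1}(C_0(X),\bb{C})$ placed in the upper-left corner, and carry it simultaneously along two routes. Down the left column, successive Kasparov products with $[k_*]$, $[\tau_{U_{2\vep}}]$, $[e^{-1}_{U_{2\vep}}]$ and $[\ud{\bb{C}}_{M}]$ yield $\ind_{S^1}^\pos([D])\in R(S^1)_\pos$, and then the bottom horizontal arrow gives $\sigma_{\ca{S}_\vep}(\ind_{S^1}^\pos([D]))$. Along the top row, $[S_X^*]\grotimes -$ followed by $\sigma_{\ca{S}_\vep}$ sends $[D]$ to $\sigma_{\ca{S}_\vep}([S_X^*]\grotimes [D])\in KK_{S^1}(\ca{A}(X),\ca{S}_\vep)$, and then the right column is exactly $\widetilde{\ind_{S^1}^\pos}$ applied to this element.

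Since every square of the diagram was shown to commute in the lemma, the two resulting elements of $KK_{S^1}(\ca{S}_\vep,\ca{S}_\vep)_\pos$ agree, which is precisely the claimed identity. In particular, commutativity of the squares in the middle columns follows from the definitions $[k_*]'=[S_{U_{2\vep}}^*]\grotimes[k_*]\grotimes[S_X]$, $[\tau_{U_{2\vep}}]'=[S_M^*]\grotimes[\tau_{U_{2\vep}}]\grotimes[S_{U_{2\vep}}]$, etc., combined with associativity of the Kasparov product and the cancellation $[S_Y]\grotimes[S_Y^*]=\bm{1}$ for each intermediate manifold $Y$; commutativity of the right-hand squares is the functoriality of $\sigma_{\ca{S}_\vep}$ with respect to Kasparov product.

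There is essentially no obstacle to this argument beyond bookkeeping: the whole point of the reformulation was to arrange that the new index factors through the old one via $[S_X^*]$ and $\sigma_{\ca{S}_\vep}$, and the preceding lemma packages this factorization into a single commutative diagram. The only minor point worth spelling out, if desired, is that the bottom-left object $R(S^1)_\pos$ and the bottom-middle object $R(S^1)_\pos$ are literally identified, so the bottom horizontal composite is indeed $\sigma_{\ca{S}_\vep}$, justifying the left-hand side of the claimed equality.
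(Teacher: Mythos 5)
Your proposal is correct and matches the paper's argument: the paper states the theorem ``is obvious'' immediately after the commutative diagram lemma, and your diagram chase from the upper-left corner via the left column and bottom row versus the top row and right column is exactly that argument spelled out. The observations about the middle squares (cancellation of $[S_Y]\grotimes[S_Y^*]$, associativity, and compatibility of $\sigma_{\ca{S}_\vep}$ with Kasparov products) are the content of the preceding lemma, which the paper likewise treats as routine.
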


\subsubsection{Topological side}

We have already deduced the fixed-point formula without using the $C_0(X)$-algebras $Cl_\tau(X)$ or $C_0(TX)$ in Section \ref{K-ori noncpt}.
By applying $\sigma_{\ca{S}_\vep}$ to each step of the construction of $(t-\ind_{S^1}^\pos)'$, we can reformulate the topological localized index.
Just like the previous subsection, for a homomorphism between $\ca{R}KK$-group $f'$, we define $\widetilde{f}$ so that the following diagram commutes:
$$\begin{CD}
\ca{R}KK(X;A_1,B_1) 
 @>\sigma_{\ca{S}_\vep}>> 
\ca{R}KK(X;\ca{S}_\vep\grotimes A_1,\ca{S}_\vep\grotimes B_1) \\
@Vf'VV @VV\widetilde{f}V \\
\ca{R}KK(X;A_2,B_2) 
 @>\sigma_{\ca{S}_\vep}>> 
\ca{R}KK(X;\ca{S}_\vep\grotimes A_2,\ca{S}_\vep\grotimes B_2).
\end{CD}$$

\begin{dfn}
We define $\widetilde{t-\ind_{S^1}^\pos}$ by the composition of the following homomorphisms:
$$\ca{R}KK_{S^1}(X;\ca{S}_\vep\grotimes C_0(X),\ca{S}_\vep\grotimes C_0(X)) 
 \xrightarrow{\widetilde{k^*}}
\ca{R}KK_{S^1}(U_\vep;\ca{S}_\vep\grotimes C_0(U_\vep),\ca{S}_\vep\grotimes C_0(U_\vep)) 
 \xrightarrow{\widetilde{j^*}} $$
$$\ca{R}KK_{S^1}(M ;\ca{S}_\vep\grotimes C(M ),\ca{S}_\vep\grotimes C(M )) 
 \xrightarrow{[\widetilde{L^*}]\grotimes-}
\ca{R}KK_{S^1}(M ;\ca{S}_\vep\grotimes C(M ),\ca{S}_\vep\grotimes C(M ))$$
$$\xrightarrow{[\widetilde{e^{-1}_{U_\vep}}]\grotimes-}
\ca{R}KK_{S^1}(M ;\ca{S}_\vep\grotimes C(M ),\ca{S}_\vep\grotimes C(M ))_\pos
 \xrightarrow{\widetilde{t-\ind}_{M }}
\bb{Q}\otimes KK_{S^1}(\ca{S}_\vep,\ca{S}_\vep)_\pos.$$
\end{dfn}

The following is obvious.
\begin{thm}
For $[u]\in \ca{R}KK_{S^1}(X;C_0(X),C_0(X))$, we have
$$\widetilde{t-\ind_{S^1}^\pos}(\sigma_{\ca{S}_\vep}([u]))
=\sigma_{\ca{S}_\vep}\bra{({t-\ind_{S^1}^\pos})'([u])}.$$
\end{thm}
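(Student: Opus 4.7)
The plan is to observe that the theorem amounts to a diagram-chase: each constituent arrow in the definition of $(t-\ind_{S^1}^\pos)'$ is, by construction, made to commute with $\sigma_{\ca{S}_\vep}$ by the very definition of the corresponding tilde-arrow appearing in $\widetilde{t-\ind_{S^1}^\pos}$. So one only needs to verify that each square commutes, and then chain them.

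First I would dispose of the Kasparov-product arrows. For the homomorphisms $[L^*]\grotimes -$ and $[e^{-1}_{U_\vep}]\grotimes -$ (and the implicit Kasparov products hidden inside $\widetilde{k^*}$ and $\widetilde{j^*}$ when expressed via pullbacks), commutativity with $\sigma_{\ca{S}_\vep}$ is just the statement that $\sigma_{\ca{S}_\vep}$ is a ring homomorphism with respect to exterior Kasparov product: for any $[x]$,
\begin{equation*}
\sigma_{\ca{S}_\vep}\bra{[\alpha]\grotimes [x]}
=\sigma_{\ca{S}_\vep}([\alpha])\grotimes \sigma_{\ca{S}_\vep}([x])
=[\widetilde{\alpha}]\grotimes \sigma_{\ca{S}_\vep}([x]),
\end{equation*}
so each square involving $[\widetilde{L^*}]$ or $[\widetilde{e^{-1}_{U_\vep}}]$ commutes on the nose. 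This argument extends without change to the tensor-with-$\ca{S}_\vep$-suspension of the $R(S^1)_\pos$-module structure.

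Next I would handle the pullback arrows $k^*$ and $j^*$. These correspond to the restriction functoriality of $\ca{R}KK$ along the inclusions $U_\vep \hookrightarrow X$ and $M\hookrightarrow U_\vep$. Since $\sigma_{\ca{S}_\vep}$ is exterior tensoring with $\ca{S}_\vep$ and does not touch the base space, it commutes with restriction of the ``family of Kasparov modules'' to a subspace. Writing the family picture of Section~\ref{RKK} explicitly, both orders of operations produce the same family $\{(\ca{S}_\vep\grotimes E_x,\ca{S}_\vep\grotimes \pi_x,1\grotimes F_x)\}_{x\in Y}$ for $Y=U_\vep$ or $M$, which gives commutativity of the two restriction squares by definition of $\widetilde{k^*}$ and $\widetilde{j^*}$.

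The main, and only genuinely non-formal, obstacle is the bottom square involving $t-\ind_M'$ and $\widetilde{t-\ind}_M$. Here I would argue as follows: $M$ is compact with trivial $S^1$-action, so by Lemma~\ref{KH for trivial action} and the obvious $KK$-analogue the source $\ca{R}KK_{S^1}(M;C(M),C(M))$ splits as $K^0(M)\otimes R(S^1)$, while $\ca{R}KK_{S^1}(M;\ca{S}_\vep\grotimes C(M),\ca{S}_\vep\grotimes C(M))$ splits as $K^0(M)\otimes KK_{S^1}(\ca{S}_\vep,\ca{S}_\vep)$, and $\sigma_{\ca{S}_\vep}$ is the identity on $K^0(M)$ tensored with the canonical ring map $R(S^1)\to KK_{S^1}(\ca{S}_\vep,\ca{S}_\vep)$. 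Under these identifications, $t-\ind_M'$ is $\int_M ch(\cdot)\td(TM)$ applied to the $K^0(M)$-factor (and the identity on coefficients), and $\widetilde{t-\ind}_M$ is the unique homomorphism making the square commute, namely the same integration tensored with $\sigma_{\ca{S}_\vep}$ on coefficients. This reduces commutativity of the last square to the naturality of the Chern character and integration with respect to the coefficient ring homomorphism.

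Finally I would stack the five commuting squares (two restrictions, two Kasparov products, one Atiyah--Singer integration, plus the $\pos$-localization square which is tautological) to conclude that $\widetilde{t-\ind_{S^1}^\pos}\circ \sigma_{\ca{S}_\vep}=\sigma_{\ca{S}_\vep}\circ (t-\ind_{S^1}^\pos)'$, which is the stated identity.
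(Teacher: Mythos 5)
Your proof is correct and takes essentially the same approach the paper intends: the paper dismisses the theorem with ``The following is obvious,'' precisely because each tilde-arrow in the composition defining $\widetilde{t-\ind_{S^1}^\pos}$ is introduced (in the displayed defining square just before the definition) as the map making its square with $\sigma_{\ca{S}_\vep}$ commute, so the identity is the chain of those squares. Your elaboration of why each square commutes (ring-homomorphism property of $\sigma_{\ca{S}_\vep}$ with respect to Kasparov product for $[\widetilde{L^*}]\grotimes-$ and $[\widetilde{e^{-1}_{U_\vep}}]\grotimes-$, compatibility of exterior tensoring with restriction in the family picture for $\widetilde{k^*}$ and $\widetilde{j^*}$, and coefficient-ring naturality of $ch$ and $\int_M$ for $\widetilde{t-\ind}_M$) is the justification the paper leaves implicit, and it is sound; the one cosmetic slip is the parenthetical calling $k^*$, $j^*$ ``implicit Kasparov products,'' though you then handle them correctly as restrictions.
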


\subsubsection{Poincar\'e duality}

We also reformulate the Poincar\'e duality homomorphism.
The reformulation of the local Bott element is given by the $KK$-equivalence $C_0(X)\cong Cl_\tau(X)$ and the $\ca{R}KK(X)$-equivalence $Cl_\tau(X)\cong C_0(TX)\cong C_0(X)$.

Strictly speaking, in order to describe the definition, we need a homomorphism 
$$\fgt:\ca{R}KK_{S^1}(X;\bullet,\bullet)\to KK_{S^1}(\bullet,\bullet)$$
defined by forgetting the $C_0(X)$-module structure.
See \cite[Section 3.2]{T5} for details.

\begin{dfn}
$(1)$ We define the {\bf reformulated local Bott element} $[\widetilde{\Theta_{X}}]$ by
$$[\widetilde{\Theta_{X}}]:=\sigma_{X,\ca{S}_\vep}\bra{[\Theta_{X}]\grotimes_{X}
\bbra{
\fgt([S_X])\grotimes [S_X^*]}}
\in \ca{R}KK_{S^1}(X;\ca{S}_\vep\grotimes C_0(X),\ca{A}(X)\grotimes \uwave{C_0(X)}).$$
The {\bf reformulated Poincar\'e duality} is defined by
$$\widetilde{\PD_X}:KK_{S^1}(\ca{A}(X),\ca{S}_\vep)
\ni [D]\mapsto
[\widetilde{\Theta_{X}}]\grotimes [D]\in
\ca{R}KK_{S^1}(X;\ca{S}_\vep\grotimes C_0(X),\ca{S}_\vep\grotimes C_0(X)).$$

$(2)$ We can similarly reformulate Definition \ref{PD for non complete}.
The reformulated version of $[\Theta_{U_\vep}']$ and $\PD_{U_\vep}'$ are denoted by
$[\widetilde{\Theta_{U_\vep}'}]$
and
$\widetilde{\PD_{U_{\vep}'}}$.
\end{dfn}

This $\ca{R}KK$-element has a quite simple representative.

\begin{dfn}
We define the {\bf local Bott homomorphism} $\beta_x:\ca{S}_\vep\to \ca{A}(X)=\ca{S}_\vep\grotimes Cl_\tau(X)$ by
$$\beta_x(f):=f(X\grotimes 1+1\grotimes C_x),$$
where $C_x$ is a vector field on $X$ given by
$$C_x(y):=\begin{cases}
-\log_{y}(x)\in T_yX & (y\in U_x), \\
\text{arbitrary vector of norm greater than }\vep & (y\notin U_x).
\end{cases}$$
Intuitively, $C_x(y)=$``$\overrightarrow{xy}$''.
\end{dfn}
\begin{rmk}
Although $C_x$ itself is not continuous, $f(X\grotimes 1+1\grotimes C_x)$ defines a continuous function since $f\in \ca{S}_\vep$ vanishes outside $(-\vep,\vep)$.
\end{rmk}

\begin{pro}[{\cite[Proposition 3.19]{T5}}]
$[\widetilde{\Theta_{X}}]$
is represented by the field of Kasparov modules
$$\bbra{\bra{\ca{A}(X),\beta_x,0}}_{x\in X}.$$
\end{pro}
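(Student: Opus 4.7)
The plan is to verify the claim fiberwise using the family description of $\ca{R}KK$-theory, reducing at each point to the Higson--Kasparov--Trout Bott periodicity (Proposition 2.19 of the excerpt), and then to patch the local representatives into a global one using the support condition on $\ca{S}_\vep$.

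First, I would unwind the definition of $[\widetilde{\Theta_X}]$ in the family picture. The fiber of $[\Theta_X]$ at $x$ is the Bott cycle $(C_0(U_x) \otimes \Cl_+(T_xX), 1_x, \vep^{-1}\Theta_x)$. The composition with $\fgt([S_X]) \grotimes [S_X^*]$ is the identity Morita move that reshapes the codomain $C_0(X) \grotimes \uwave{Cl_\tau(X)}$ so that, after the $\ca{S}_\vep$-suspension $\sigma_{X,\ca{S}_\vep}$, the fiber at $x$ becomes representable by
$$\bigl(\ca{S}_\vep \grotimes Cl_\tau(U_x),\; \pi_x,\; X \grotimes 1 + 1 \grotimes \vep^{-1}\Theta_x\bigr),$$
where the $\Theta_x$-operator is transported from $T_xX$ to $U_x$ via the exponential chart, so it becomes (up to sign) multiplication by the local vector field $C_x$ defined by $C_x(y) = -\log_y(x)$ on $U_x$.

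Second, I would apply the Bott periodicity trick fiberwise. The Kasparov module above is operator-homotopic, via the standard linear homotopy $t \mapsto t\beta_x(f) + (1-t) f\cdot (X\grotimes 1 + 1 \grotimes \vep^{-1}C_x)$ on generators combined with the functional calculus argument of HKT, to the module with zero operator and the new $*$-homomorphism
$$\beta_x(f) := f\bigl(X \grotimes 1 + 1 \grotimes C_x\bigr).$$
This is precisely the content of Proposition 2.19, applied to $V = T_xX$ and restricted to the $\vep$-ball.

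Third, I would extend the local representative from $\ca{A}(U_x)$ to $\ca{A}(X)$. Because every $f \in \ca{S}_\vep$ is supported in $(-\vep,\vep)$ and $C_x$ is extended to all of $X$ by a vector field of norm $\geq \vep$ outside $U_x$, the spectral calculus $\beta_x(f) = f(X\grotimes 1 + 1 \grotimes C_x)$ automatically lands in $\ca{A}(X) = \ca{S}_\vep \grotimes Cl_\tau(X)$ and vanishes off $U_x$. Therefore the zero-extension of the module to $\ca{A}(X)$ is still a valid Kasparov cycle, giving the claimed representative $(\ca{A}(X), \beta_x, 0)$. The upper semi-continuity of the field in $x$ and its $S^1$-equivariance follow from smoothness of the exponential map and isometry of the $S^1$-action.

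The main obstacle is the bookkeeping in the first step: tracking the overlapping roles of $C_0(X)$ and $Cl_\tau(X)$ (both as the targets of $[\Theta_X]$ and as the base/coefficient algebras), and verifying that $\fgt([S_X]) \grotimes [S_X^*]$ indeed acts as a fiberwise Thom-reshaping that preserves $[\Theta_X]$ up to the identification above. Once this is straightened out, the remaining analytic input is exactly the HKT functional-calculus homotopy, which is a clean linear interpolation, and the support argument in Step~3 is elementary.
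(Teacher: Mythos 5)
This proposition is cited from \cite[Proposition 3.19]{T5}, so the present paper contains no proof to compare against; I can only assess your proposal as a self-standing argument.

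Your Steps~1 and~3 are sound in outline: unwinding $[\widetilde{\Theta_X}]$ fiberwise via the family description, and observing that the support condition on $\ca{S}_\vep$ (together with $\|C_x(y)\|\geq\vep$ off $U_x$) lets the fiber module sit inside $\ca{A}(X)$, are both the right moves. You also correctly flag as the ``main obstacle'' the bookkeeping that identifies the output of $\fgt([S_X])\grotimes[S_X^*]$ with $Cl_\tau(U_x)$: this is not an ``identity Morita move'' but a genuine two-sided Morita/swap combined with the radial-geodesic parallel-transport trivializations of $S_X|_{U_x}$ and $\Cl_+(TX)|_{U_x}$, and under those identifications $\Theta_x(y)\in T_xX$ does transport to $-\log_y(x)=C_x(y)\in T_yX$. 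You acknowledge this needs straightening out, but you do not actually do it.

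The concrete gap is in Step~2. The formula
\[
t\mapsto t\,\beta_x(f)+(1-t)\,f\cdot\bigl(X\grotimes 1+1\grotimes\vep^{-1}C_x\bigr)
\]
is not a homotopy of Kasparov cycles. A convex combination of $*$-homomorphisms is not a $*$-homomorphism for $t\in(0,1)$, and moreover the two cycles you want to connect do not even share the same underlying Hilbert module: one is $\bigl(\ca{A}(X),\beta_x,0\bigr)$ with $\ca{A}(X)$ over itself, the other is $\bigl(\ca{S}_\vep\grotimes Cl_\tau(U_x),\,1\grotimes\pi,\,1\grotimes\vep^{-1}C_x\bigr)$ (note that $\sigma_{X,\ca{S}_\vep}$ only tensors; it does not insert an $X\grotimes 1$ into the operator, so the suspended fiber has operator $1\grotimes\vep^{-1}C_x$, not $X\grotimes 1+1\grotimes\vep^{-1}C_x$). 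The correct analytic input is exactly what you cite a sentence later --- the Higson--Kasparov--Trout identity $[\beta]=\sigma_{\ca{S}}([b_V])$ recorded in the paper's Proposition~2.19 --- and its proof uses the scaling/rotation homotopy $s\mapsto f\bigl(sX\grotimes 1+1\grotimes C\bigr)$ (or its bounded-picture/Cayley-transform variant), not a convex combination. You should invoke that result directly (with the $\ca{S}_\vep$, $\vep$-ball version, which is what reconciles the $\vep^{-1}$ scaling appearing in the representative of $[\Theta_X]$) rather than improvising the interpolation. As written, the homotopy step would not compile into a valid cycle, so the proof has a genuine hole there even though the surrounding strategy is correct.
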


By the same argument, we have the following.

\begin{cor}
$[\widetilde{\Theta_{U_\vep}'}]$
is represented by the field of Kasparov modules
$$\bbra{\bra{\ca{A}(X),\beta_x,0}}_{x\in U_\vep}.$$
\end{cor}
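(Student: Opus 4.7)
The plan is to adapt the proof of the preceding Proposition almost verbatim, observing that only two things change in passing from $[\widetilde{\Theta_{X}}]$ to $[\widetilde{\Theta_{U_\vep}'}]$: the base parameter $x$ is restricted to $U_\vep$ rather than ranging over all of $X$, and the fiberwise data in the original definition of $[\Theta_{U_\vep}']$ lives over $C_0(U_{2\vep})$ rather than $C_0(X)$. Since for every $x\in U_\vep$ the $\vep$-ball $U_x$ lies inside $U_{2\vep}$ (triangle inequality), the fiberwise Kasparov module $(C_0(U_x)\grotimes \Cl_+(T_xX),1_x,\vep^{-1}\Theta_x)$ already has the $U_{2\vep}$-support property needed, so no new argument concerning the ambient algebra is required.

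First I would unwind the reformulation: applying $\sigma_{\ca{S}_\vep}$ and conjugating by the Spinor $KK$-equivalence $\fgt([S])\grotimes[S^*]$ replaces each fiberwise Bott element by its ``Clifford-suspended'' form. By the $*$-homomorphism realization of Bott periodicity recalled earlier (the map $\beta(f)=f(X\grotimes 1+1\grotimes C)$), the canonical representative of this suspended Bott element is the Kasparov cycle with underlying module $\ca{A}(X)$ (viewed as a right module over itself), left action given by $\beta_x(f)=f(X\grotimes 1+1\grotimes C_x)$, and zero operator. This is exactly the statement of the Proposition cited from \cite[Proposition 3.19]{T5} applied pointwise; executing its proof with $x$ fixed in $U_\vep$ rather than in all of $X$ gives the Corollary.

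Second, I would check that the restricted family really is a cycle in $\ca{R}KK_{S^1}(U_\vep;\ca{S}_\vep\grotimes C_0(U_\vep),\ca{A}(U_{2\vep})\grotimes\uwave{C_0(U_\vep)})$. Since $f\in \ca{S}_\vep$ vanishes outside $(-\vep,\vep)$ and since the norm of the unbounded multiplier $X\grotimes 1+1\grotimes C_x$ is at least $\|C_x(y)\|\geq \vep$ for $y\notin U_x$, the continuous-functional-calculus element $\beta_x(f)$ is automatically supported in $U_x\subset U_{2\vep}$, independently of how $C_x$ is extended past $U_x$. Upper semi-continuity of the field $\{\beta_x\}_{x\in U_\vep}$ and $S^1$-equivariance are inherited from the Proposition.

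The main (and fairly mild) obstacle will be bookkeeping: the statement writes $\ca{A}(X)$, but in the restricted setting the natural target is $\ca{A}(U_{2\vep})$ (or $\ca{A}(X)$ identified via the zero-extension $[\widetilde{k_*}]$). Once one fixes this identification and checks that the Kasparov-product computation performed in \cite[Proposition 3.19]{T5} uses only local data on $U_x$, the proof of the Corollary reduces to ``the same argument, restricted to $x\in U_\vep$,'' with no additional analytic input beyond the containment $U_x\subset U_{2\vep}$.
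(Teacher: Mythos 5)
The paper's own proof is literally the phrase ``By the same argument, we have the following,'' referring to the preceding Proposition (i.e.\ \cite[Proposition 3.19]{T5}); your proposal is a faithful and correct unfolding of exactly that argument. The two auxiliary observations you make --- that $U_x\subset U_{2\vep}$ for $x\in U_\vep$ so $\beta_x(f)$ is supported where it should be, and that $\ca{A}(X)$ in the statement should be read as $\ca{A}(U_{2\vep})$ identified inside $\ca{A}(X)$ via $\widetilde{k_*}$ --- are the right points to address and are consistent with the paper's intent.
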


Then, we have the following fixed-point theorem.

\begin{thm}\label{non-cpt index thm}
The following diagram commutes:
$$\xymatrix{
KK_{S^1}(\ca{A}(X),\ca{S}_\vep) 
 \ar^-{\widetilde{\PD_X}}[r] \ar_-{[\widetilde{k_*}]\grotimes-}[d] &
\ca{R}KK_{S^1}(X;\ca{S}_\vep\grotimes C_0(X),\ca{S}_\vep\grotimes C_0(X))
 \ar^{\widetilde{k^*}}[dd] \\
KK_{S^1}(\ca{A}(U_{2\vep}),\ca{S}_\vep)
 \ar^-{\widetilde{\PD_{U_\vep}'}}[rd]
 \ar_-{[\widetilde{\tau_{U_{2\vep}}}]\grotimes-}[dd] & \\
 & 
\ca{R}KK_{S^1}(U_\vep;\ca{S}_\vep\grotimes C_0(U_\vep),\ca{S}_\vep\grotimes C_0(U_\vep)) 
 \ar^{\widetilde{j^*}}[d]  \\
KK_{S^1}(\ca{A}(M ),\ca{S}_\vep)
 \ar_-{\widetilde{\PD_{M }}}[r] 
 \ar_-{[\widetilde{e^{-1}_{U_{2\vep}}}]\grotimes-}[d] &
\ca{R}KK_{S^1}(M ;\ca{S}_\vep\grotimes C(M ),\ca{S}_\vep\grotimes C(M ))
 \ar^-{[\widetilde{e^{-1}_{U_{2\vep}}}]\grotimes-}[d] \\
KK_{S^1}(\ca{A}(M ),\ca{S}_\vep)
 \ar_-{\widetilde{\PD_{M }}}[r] 
 \ar_-{[\widetilde{L^*}]\grotimes-}[d] &
\ca{R}KK_{S^1}(M ;\ca{S}_\vep\grotimes C(M ),\ca{S}_\vep\grotimes C(M ))
 \ar^-{[\widetilde{L^*}]\grotimes-}[d] \\
KK_{S^1}(\ca{A}(M ),\ca{S}_\vep)_\pos
 \ar_-{\widetilde{\PD_{M }}}[r] 
 \ar_-{[\widetilde{\ud{\bb{C}}_{M }}]\grotimes -}[d] &
\ca{R}KK_{S^1}(M ;\ca{S}_\vep\grotimes C(M ),\ca{S}_\vep\grotimes C(M ))_\pos
 \ar^-{\widetilde{t-\ind_{M }}}[d] \\
KK_{S^1}(\ca{S}_\vep,\ca{S}_\vep)_\pos
 \ar@{^{(}->}^-{=}[r] &
\bb{Q}\otimes KK_{S^1}(\ca{S}_\vep,\ca{S}_\vep)_\pos.
}$$
%
\end{thm}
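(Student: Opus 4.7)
The plan is to deduce this diagram from the commutative diagram obtained by combining the (already proven) Diagram (\ref{Big diagram cpt}) and Diagram (\ref{Big diagram for K ori}) together with the intertwiner $\sigma_{\ca{S}_\vep}$ and the Spinor-based $KK_{S^1}$-equivalence $C_0(\cdot) \simeq Cl_\tau(\cdot)$. Each tilded element in the diagram is \emph{by definition} the image under $\sigma_{\ca{S}_\vep}$ of its primed counterpart, and each primed element is the conjugate of the original element by $[S]$ and $[S^*]$. So naturality of $\sigma_{\ca{S}_\vep}$ plus associativity of the Kasparov product promote any commutativity established for the original diagrams to the reformulated one. In this way the four lower squares (those involving $[\widetilde{e^{-1}_{U_{2\vep}}}]\grotimes-$, $[\widetilde{L^*}]\grotimes-$ and $[\widetilde{\ud{\bb{C}}_M}]\grotimes-$ on the left, matched with the corresponding Kasparov products on the right) reduce immediately to the commutativity of the squares $(vi)$, $(vii)$, $(viii)$ of Diagram (\ref{Big diagram cpt}) and to the square $(iv)$ of Diagram (\ref{Big diagram for K ori}). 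The latter encodes the Atiyah-Singer theorem for $M$ via Lemma \ref{K theory Thom}; its $\sigma_{\ca{S}_\vep}$-image is the compatibility of $\widetilde{t\text{-}\ind_M}$ with $\widetilde{\PD_M}$.

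The middle square, relating $\widetilde{\PD_{U_\vep}'}$ and $\widetilde{\PD_M}$ via $[\widetilde{\tau_{U_{2\vep}}}]\grotimes-$ on the left and $\widetilde{j^*}$ on the right, is the reformulated version of square $(iii)$ in Diagram (\ref{Big diagram cpt}). After tensoring with $\ca{S}_\vep$, its proof uses verbatim the operator homotopy $F_s = \rho(f_s)$ between the sum Clifford operator $\frac{C^{\rm fib}}{\sqrt{\vep^2-(C^{\rm fib})^2}} + \frac{\Theta^M}{\sqrt{\vep^2-(\Theta^M)^2}}$ and the single Clifford operator $\vep^{-1}\Theta^X$ along the geodesic decomposition $\Theta^X_x = C^{\rm fib}_x + \Theta^M_x$ valid on $U_{2\vep}|_{V_x}$; the homotopy is not disturbed by the extra graded tensor with $\ca{S}_\vep$, and $[\widetilde{L^*}]$ handles the Spinor discrepancy between $S_X|_M$ and $S_M\grotimes\wedge^*\nu_{\bb{C}}(M)$.

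The top square, asserting that $\widetilde{k^*}\circ \widetilde{\PD_X} = \widetilde{\PD_{U_\vep}'}\circ([\widetilde{k_*}]\grotimes -)$, is where the simple representative $\{(\ca{A}(X),\beta_x,0)\}_{x\in X}$ of $[\widetilde{\Theta_X}]$ does the real work. For $x \in U_\vep$ we have $U_x\subset U_{2\vep}$, so the vector field $C_x$ takes values whose norm stays below $\vep$ precisely on $U_x$ and the function $\beta_x(f) = f(X\grotimes 1 + 1\grotimes C_x)$ vanishes outside $U_x$ because $f\in\ca{S}_\vep$ is supported in $(-\vep,\vep)$. Consequently the field at $x\in U_\vep$ of $\widetilde{k^*}(\widetilde{\PD_X}([D]))$ coincides with that of $\widetilde{\PD_{U_\vep}'}([\widetilde{k_*}]\grotimes[D])$, both being represented by $(\ca{A}(X), \beta_x, 0)\grotimes_{\ca{A}(X)}[\widetilde{k_*}]\grotimes [D]$, which is precisely the content of the corollary immediately before the theorem.

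The main obstacle is this top square: $[\widetilde{\Theta_X}]$ lives over $X$ while $[\widetilde{\Theta_{U_\vep}'}]$ lives over the non-complete piece $U_\vep$ with target algebra involving $U_{2\vep}$, so one must carefully check that restriction and zero extension interact correctly. The remaining reduction --- verifying that the bottom row composed gives $\widetilde{\ind_{S^1}^\pos}$ on the left and $\widetilde{t\text{-}\ind_{S^1}^\pos}$ on the right --- is automatic from the definitions of these reformulated indices.
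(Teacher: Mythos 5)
Your overall strategy---deducing each square from the corresponding square in Diagrams (\ref{Big diagram cpt}) and (\ref{Big diagram for K ori}) by applying $\sigma_{\ca{S}_\vep}$ and the $[S_E]$, $[S_E^*]$ conjugation, then appealing to $\sigma_{\ca{S}_\vep}([x]\grotimes[y]) = \sigma_{\ca{S}_\vep}([x])\grotimes\sigma_{\ca{S}_\vep}([y])$ and associativity---is exactly the paper's argument for the lower squares; the paper illustrates the method only on the first square and says the rest follow by parallel arguments. Your accounting of which earlier square each tilded square reduces to is roughly right, though the $[\widetilde{L^*}]\grotimes-$ square has no exact counterpart among the numbered squares: it is handled by the same kind of exponential-map trivialization argument as $(vi)$, not by any one of $(vi)$--$(viii)$ or $(iv)$ literally.

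Where you genuinely deviate is the top square. The paper proves it by the same $\sigma_{\ca{S}_\vep}$-promotion scheme: it first draws the $Cl_\tau$-level square (with $[\Theta_X']$ and $[\Theta_{U_\vep}']'$), observes it commutes as a $Cl_\tau$-reformulation of square $(i)$ in Diagram (\ref{Big diagram cpt}), and then extends the diagram by $\sigma_{\ca{S}_\vep}$ at all four corners. You instead compute both compositions directly at the level of cycles, using the representative $\{(\ca{A}(X),\beta_x,0)\}$ of $[\widetilde{\Theta_X}]$ (and of $[\widetilde{\Theta_{U_\vep}'}]$) together with the fact that for $x\in U_\vep$ the local Bott homomorphism $\beta_x$ factors through $\ca{A}(U_{2\vep})$, so that $\widetilde{k_*}\circ\beta_x = \beta_x$. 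This is a valid alternative and, notably, it is precisely the argument the paper uses in Section 4 for the loop-space version of this theorem, where the $\sigma_{\ca{S}_\vep}$-reduction is unavailable since there are no un-tilded finite-dimensional objects to reduce to. Your route has the virtue of transferring unchanged to $LM$; the paper's route has the virtue of leaning entirely on already-proved finite-dimensional commutativities. Both close the argument. One small overstatement: the corollary before the theorem only supplies the $\beta_x$-representative; the equality of cycles on $U_\vep$ still needs the short $\widetilde{k_*}\circ\beta_x = \beta_x$ computation you sketch, so it is not quite "precisely the content of the corollary."
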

\begin{proof}
It suffices to prove that each square commutes. For example, commutativity of the first square can be checked as follows. Note that the following diagram commutes:
$$\begin{CD}
KK_{S^1}(Cl_\tau(X),\bb{C})
 @>[\Theta_X']\otimes->> 
\ca{R}KK_{S^1}(X; C_0(X),C_0(X)) \\
 @V[k'_*]\grotimes- VV @VVk^*V \\
KK_{S^1}(Cl_\tau(U_{2\vep}),\bb{C})
 @>[\Theta_{U_\vep}']'\otimes->> 
\ca{R}KK_{S^1}(U_\vep; C_0(U_{\vep}),C_0(U_{\vep})),
\end{CD}$$
where $[\Theta_{U_\vep}']'$ is defined by $[\Theta_{U_\vep}']$ with the $KK$-equivalence $C_0(U_{2\vep}) \cong Cl_\tau(U_{2\vep})$.

Since $\sigma_{\ca{S}_\vep}([x]\grotimes [y])=\sigma_{\ca{S}_\vep}([x])\grotimes \sigma_{\ca{S}_\vep}([y])$, the following extended diagram commutes:
{\footnotesize 
$$\xymatrix{
KK_{S^1}(\ca{A}(X),\ca{S}_\vep) 
 \ar^-{\widetilde{\PD_X}}[rrr] \ar_-{[\widetilde{k_*}]\grotimes-}[ddd] &&&
\ca{R}KK_{S^1}(X;\ca{S}_\vep\grotimes C_0(X),\ca{S}_\vep\grotimes C_0(X))
 \ar^{\widetilde{k^*}}[ddd] \\
&KK_{S^1}(Cl_\tau(X),\bb{C})
 \ar^{[\Theta_X']\otimes-}[r]
 \ar_{[k'_*]\grotimes-}[d] 
 \ar^{\sigma_{\ca{S}_\vep}}[lu] &
\ca{R}KK_{S^1}(X; C_0(X),C_0(X)) 
 \ar^{k^*}[d] 
 \ar_{\sigma_{\ca{S}_\vep}}[ru] & \\
&KK_{S^1}(Cl_\tau(U_{2\vep}),\bb{C})
 \ar^{[\Theta_{U_\vep}']'\otimes-}[r] 
 \ar_{\sigma_{\ca{S}_\vep}}[ld] &
\ca{R}KK_{S^1}(U_\vep; C_0(U_{\vep}),C_0(U_{\vep})) 
 \ar^{\sigma_{\ca{S}_\vep}}[rd] & \\
KK_{S^1}(\ca{A}(U_{2\vep}),\ca{S}_\vep)
 \ar^-{\widetilde{\PD_{U_\vep}'}}[rrr] &&& 
\ca{R}KK_{S^1}(U_\vep;\ca{S}_\vep\grotimes C_0(U_\vep),\ca{S}_\vep\grotimes C_0(U_\vep)). }
$$}
Commutativity of the outside square is what we wanted to prove. In fact, since each square commutes, so does the biggest one.

One can verify that the other square commute by parallel arguments.

\end{proof}

\begin{thm}
For any $[D]\in KK_{S^1}(\ca{A}(X),\ca{S}_\vep)$, 
$$\widetilde{\ind_{S^1}^\pos}([D])=
\widetilde{t-\ind_{S^1}^\pos}(\widetilde{\PD_X}([D])).$$
\end{thm}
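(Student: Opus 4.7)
The plan is to deduce the identity directly from the large commutative diagram in Theorem \ref{non-cpt index thm}, by chasing the class $[D] \in KK_{S^1}(\ca{A}(X),\ca{S}_\vep)$ through it by two routes. Descending the left column of that diagram applies in order the Kasparov products $[\widetilde{k_*}]\grotimes-$, $[\widetilde{\tau_{U_{2\vep}}}]\grotimes-$, $[\widetilde{e^{-1}_{U_{2\vep}}}]\grotimes-$, $[\widetilde{L^*}]\grotimes-$, and $[\widetilde{\ud{\bb{C}}_M}]\grotimes-$; by the definition of $\widetilde{\ind_{S^1}^\pos}$ (as the composition of the right vertical arrows in the diagram of the Analytic side subsection), this agrees with $\widetilde{\ind_{S^1}^\pos}([D])$.

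On the other route, the top horizontal arrow $\widetilde{\PD_X}$ carries $[D]$ to $\widetilde{\PD_X}([D])$, after which the right column applies $\widetilde{k^*}$, $\widetilde{j^*}$, Kasparov products with $[\widetilde{e^{-1}_{U_{2\vep}}}]$ and $[\widetilde{L^*}]$, and finally $\widetilde{t-\ind}_M$. By the definition of $\widetilde{t-\ind_{S^1}^\pos}$, this composition produces precisely $\widetilde{t-\ind_{S^1}^\pos}(\widetilde{\PD_X}([D]))$. The only minor point is that the definitions specify the order $[\widetilde{L^*}]\grotimes-$ before $[\widetilde{e^{-1}_{U_\vep}}]\grotimes-$, whereas the diagram of Theorem \ref{non-cpt index thm} uses the opposite order. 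This is harmless: since $S^1$ acts trivially on $M$, both classes are pulled back from the commutative subring $K^0(M)\otimes R(S^1)$ via Lemma \ref{KH for trivial action}, and hence they commute as multipliers on the relevant groups.

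With this minor reordering observed, the desired equality follows at once from the commutativity of the diagram in Theorem \ref{non-cpt index thm}. There is no substantive obstacle remaining: all of the work — verifying that the local Bott element intertwines the analytic and topological operations, trivializing the inverse Euler class via the exponential map, and matching Dirac and symbol elements through the Thom isomorphism — has already been absorbed into the earlier diagram chase. Consequently the final theorem is an immediate corollary of Theorem \ref{non-cpt index thm}.
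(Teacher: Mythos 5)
Your proof is correct and follows the same route as the paper, which states this theorem without further proof immediately after Theorem~\ref{non-cpt index thm} and clearly intends it as a direct reading-off from that commutative diagram. Your explicit remark about the transposed order of the products with $[\widetilde{L^*}]$ and $[\widetilde{e^{-1}_{U_{2\vep}}}]$ between the definitions and the diagram, together with the justification that they commute because both arise via $\sigma_{\ca{S}_\vep}$ and the $[S_M]$, $[S_M^*]$ equivalences from elements of the commutative ring $KK_{S^1}(C(M),C(M))_\pos\cong K^0(M)\otimes R(S^1)_\pos$, is a genuine and welcome clarification of a discrepancy the paper silently glosses over.
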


This is the correct form of the localized index theorem to formulate an infinite-dimensional version.

\subsection{Comparison with Hochs-Wang's index}\label{Comparison with the Hochs-Wang's index}

Peter Hochs and Hang Wang constructed an index for non-compact manifolds equipped with a torus action with compact fixed-point set in \cite{HW}. 
In this subsection, we compare it with our index.

We begin with a review of the construction of \cite{HW} for $G=S^1$. 

\begin{dfn}
For each $x\in S^1$, we have a ring homomorphism $ch_x:R({S^1})\to \bb{C}$ given by $ch_x( \rho):={\rm tr}( \rho(x))$ for a virtual representation $ \rho$ of ${S^1}$.
For $g\in {S^1}$, let $I_g:=\ker(ch_g)$.
The {\bf localization at $g$} is the localization in the sense of commutative rings: $(R({S^1})-I_g)^{-1}R({S^1})$. It is denoted by $R({S^1})_g$. Note that it is a unital $R({S^1})$-algebra.

Associated to it, we introduce the following symbols:
\begin{itemize}
\item For an $R({S^1})$-module $\ca{M}$, we define $\ca{M}_g:=\ca{M}\otimes_{R({S^1})}R({S^1})_g$.

\item The natural homomorphism $\ca{M}\ni m\mapsto m\otimes 1\in \ca{M}\otimes_{R({S^1})}R({S^1})_g=\ca{M}_g$ is denoted by $\loc_g$.

\item For $R({S^1})$-modules $\ca{M}$ and $\ca{N}$ and an $R({S^1})$-module homomorphism $F:\ca{M}\to \ca{N}$, the induced homomorphism between the localizations $F\otimes\id_{R({S^1})_g}:\ca{M}_g\to \ca{N}_g$ is denoted by $F_g$.
\end{itemize}
\end{dfn}

Let $X$ be a finite-dimensional complete Riemannian manifold equipped with an isometric action of  ${S^1}$. Let $g\in {S^1}$ be a generator. Suppose that the fixed-point set $M=X^g$ is compact. Take relatively compact ${S^1}$-invariant neighborhoods $U$ and $V$ of $M $ satisfying that $\overline{V}\subseteq U$. We denote the inclusions by
$$M \xrightarrow{j}V\xrightarrow{i}U \xrightarrow{k} X.$$
Since $U$ is relatively compact, the following result holds.

\begin{pro}[{\cite[Theorem 2.3]{HW}}]
$i_*:K_0^{{S^1}}(\overline{V})\to K_0^{{S^1}}(U)$ is invertible after localization at $g$.
\end{pro}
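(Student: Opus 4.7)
The plan is to show that both $K_0^{S^1}(\overline{V})_g$ and $K_0^{S^1}(U)_g$ are canonically isomorphic to $K_0^{S^1}(M)_g$ via the inclusion of $M$, from which it follows by functoriality that $i_*$ is an isomorphism after localization at $g$. The key input throughout is the Atiyah-Segal localization theorem in the $K$-homology picture: for a compact $S^1$-space $Y$ with $g$-fixed set $Y^g$, the inclusion $Y^g \hookrightarrow Y$ induces an isomorphism $K_*^{S^1}(Y^g)_g \xrightarrow{\cong} K_*^{S^1}(Y)_g$, and in particular $K_*^{S^1}(Y)_g = 0$ whenever $Y^g = \emptyset$.

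Because $M = X^{S^1}$ and $M \subseteq V \subseteq \overline{V} \subseteq U$, the $g$-fixed set of both $\overline{V}$ and $\overline{U}$ equals $M$. Applying Atiyah-Segal to the compact space $\overline{V}$ immediately gives an isomorphism $(\iota_V)_* : K_0^{S^1}(M)_g \xrightarrow{\cong} K_0^{S^1}(\overline{V})_g$, where $\iota_V : M \hookrightarrow \overline{V}$ is the inclusion. To handle the open set $U$, I would first apply Atiyah-Segal to $\overline{U}$ to get $K_0^{S^1}(M)_g \cong K_0^{S^1}(\overline{U})_g$, and then descend to $U$ via the six-term exact sequence in equivariant $K$-homology attached to the short exact sequence of $S^1$-$C^*$-algebras $0 \to C_0(U) \to C(\overline{U}) \to C(\partial U) \to 0$, where $\partial U := \overline{U} \setminus U$. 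Since $M$ lies inside the open set $U$, the boundary $\partial U$ is $g$-fixed-point free, so Atiyah-Segal forces $K_*^{S^1}(\partial U)_g = 0$; exactness of localization at $g$ then yields $K_0^{S^1}(\overline{U})_g \cong K_0^{S^1}(U)_g$, and composing produces an isomorphism $(\iota_U)_* : K_0^{S^1}(M)_g \xrightarrow{\cong} K_0^{S^1}(U)_g$, where $\iota_U : M \hookrightarrow U$.

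Since $\iota_U = i \circ \iota_V$ as continuous maps, functoriality of the pushforward gives $(\iota_U)_* = i_* \circ (\iota_V)_*$ after localization, so $i_*$ being sandwiched between two isomorphisms is itself an isomorphism. The main technical obstacle I expect is the vanishing $K_*^{S^1}(\partial U)_g = 0$ for the possibly singular compact $S^1$-space $\partial U$: this requires a form of Atiyah-Segal applicable to arbitrary compact $S^1$-spaces, which is available via Segal's original argument, or it can be circumvented by choosing $U$ to have smooth $S^1$-invariant boundary so that $\partial U$ is a smooth fixed-point-free $S^1$-manifold. An alternative route would be to formulate the localization theorem directly for locally compact $S^1$-spaces and apply it to $C_0(U)$ without introducing $\overline{U}$ at all.
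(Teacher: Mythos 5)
The paper does not prove this proposition; it cites it outright as [HW, Theorem 2.3], and the closest thing to a proof in the paper itself is the appendix Lemma \ref{Loc thm fps}, which adapts the same argument to the ring $R(S^1)_\pos$. Your proof matches the underlying strategy of both: reduce, via six-term exact sequences in equivariant $K$-homology, to the vanishing of $K_*^{S^1}(Z)_g$ for a compact $S^1$-space $Z$ without $g$-fixed points. The difference is organizational. You invoke Atiyah--Segal localization (in its $K$-homology incarnation) twice as a black box, on $\overline{V}$ and $\overline{U}$, and reserve the explicit exact-sequence step for the passage from $\overline{U}$ to $U$ through $\partial U$; the appendix (and, by its own attribution, HW) uses a single exact sequence for the pair directly and then proves the needed vanishing by hand -- showing the unital ring $KK_{S^1}(C(Z),C(Z))_g$ is trivial by exhibiting, via the slice theorem, an invertible element of $R(S^1)_g$ whose image in that ring is zero (the Furuta/Segal construction), after which the module $K_0^{S^1}(Z)_g$ collapses. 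The two decompositions are equivalent, but the direct ring-vanishing argument has a concrete advantage that addresses exactly the caveat you flag at the end: it works for an arbitrary compact $S^1$-space, needing neither smoothness of $\partial U$ nor a $G$-CW model, and it produces the $K$-homology vanishing immediately from the module structure over $KK_{S^1}(C(Z),C(Z))$, whereas Atiyah--Segal in its classical form is a $K$-theory (cohomology) statement whose $K$-homology transport you would still need to justify. So the proposal is sound, but I would replace the two black-box invocations of Atiyah--Segal with the direct ring argument, which is both more robust and what the paper actually uses.
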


\begin{dfn}[{\cite[Section 2.2]{HW}}]
We define the {\bf localized index at $g$} by the composition of the homomorphisms
$$KK_{S^1}(C_0(X),\bb{C})\xrightarrow{[k_*]\grotimes -}KK_{S^1}(C_0(U),\bb{C})\xrightarrow{\loc_g}KK_{S^1}(C_0(U),\bb{C})_g\xrightarrow{(i^*)_g^{-1}}$$
$$KK_{S^1}(C(\overline{V}),\bb{C})_g\xrightarrow{[\bb{C}_{\overline{V}}]\grotimes-}KK_{S^1}(\bb{C},\bb{C})_g=R({S^1})_g.$$
The localized index at $g$ is denoted by $\ind_g:K_0^{S^1}(X)\to R({S^1})_g$.
\end{dfn}

We reformulate it by using a more ``delicate'' localization.

\begin{dfn}
We identify $R(G)$ with the ring of characters.
Let $S_g$ be the multiplicative closed set consisting of $f(z)=\sum_{n\geq n_0} a_nz^n$ such that $f(g)\neq 0$ and $a_{n_0}=\pm 1$. 
We define $R({S^1})_{g,\New}:=S_g^{-1}R({S^1})$.

Associated to it, we introduce the following symbols:
\begin{itemize}
\item For an $R({S^1})$-module $\ca{M}$, we define $\ca{M}_{g,\New}=\ca{M}\otimes_{R({S^1})}R({S^1})_{g,\New}$.
\item The homomorphism $\ca{M}\ni m\mapsto m\otimes 1\in \ca{M}\otimes_{R({S^1})}R({S^1})_{g,\New}=\ca{M}_{g,\New}$ is denoted by $\loc_{g,\New}$.
\item For $R({S^1})$-modules $\ca{M}$ and $\ca{N}$ and an $R({S^1})$-module homomorphism $F:\ca{M}\to \ca{N}$, $F\otimes\id_{R({S^1})_{g,\New}}:\ca{M}_{g,\New}\to \ca{N}_{g,\New}$ is denoted by $F_{g,\New}$.
%
\end{itemize}
\end{dfn}

Three constructions $R(S^1)_\pos$, $R(S^1)_g$ and $R(S^1)_{g,\New}$ are related as follows.

\begin{pro}
We can define natural injective ring homomorphisms
$$\Phi_1:R(S^1)_{g,\New}\to R(S^1)_{g},\ \ \ 
\Phi_2:R(S^1)_{g,\New}\to R(S^1)_\pos$$
so that the restrictions to $R(S^1)$ are identity maps.
\end{pro}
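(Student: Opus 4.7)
The plan is to construct both homomorphisms directly from the universal property of localization, then establish injectivity by noting that all relevant rings are integral domains sitting inside a common fraction field.

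For $\Phi_1$, I would first observe that $S_g \subset R(S^1) \setminus I_g$ by the very definition of $S_g$ (the requirement $f(g) \neq 0$ is exactly $f \notin I_g$). Since every element of $R(S^1) \setminus I_g$ is already invertible in $R(S^1)_g$, the universal property of the localization $R(S^1)_{g,\New} = S_g^{-1}R(S^1)$ produces a unique ring homomorphism $\Phi_1: R(S^1)_{g,\New} \to R(S^1)_g$ extending the canonical map $R(S^1) \to R(S^1)_g$. For $\Phi_2$, I would invoke Lemma \ref{fps invertible}: by the definition of $S_g$, each $f \in S_g$ has leading (lowest-degree) coefficient $\pm 1$, so it is invertible in $R(S^1)_\pos$. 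The universal property of localization again yields a unique $\Phi_2$ extending the canonical inclusion $R(S^1) \hookrightarrow R(S^1)_\pos$. Both maps restrict to the identity on $R(S^1)$ by construction.

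Injectivity is the only step that needs a little care. Since $R(S^1) \cong \bb{Z}[q,q^{-1}]$ is an integral domain and $0 \notin S_g$, the localization $R(S^1)_{g,\New}$ embeds into the fraction field of $R(S^1)$; the same holds for $R(S^1)_g$, and $\Phi_1$ becomes the inclusion of one subring of this fraction field into a larger one, hence injective. For $\Phi_2$, I would first verify that $R(S^1)_\pos$ is itself a domain via the standard leading-term argument: the product of two nonzero bounded-below formal series has a nonzero lowest-degree term because $\bb{Z}$ is a domain. Once this is in hand, injectivity of $\Phi_2$ reduces to checking that if $r \cdot s^{-1} = 0$ in $R(S^1)_\pos$ with $r \in R(S^1)$ and $s \in S_g$, then $r = 0$ in $R(S^1)$, which follows from the injectivity of the obvious inclusion $R(S^1) \hookrightarrow R(S^1)_\pos$.

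There is essentially no serious obstacle here: the whole argument is a routine application of the universal property of localization together with the observation that both target rings are integral domains. The only nontrivial external input is Lemma \ref{fps invertible}, which has already been proved.
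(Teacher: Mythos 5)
Your construction of $\Phi_1$ and $\Phi_2$ via the universal property of localization — checking that every element of $S_g$ becomes a unit in $R(S^1)_g$ (trivially, since $S_g\subset R(S^1)\setminus I_g$) and in $R(S^1)_\pos$ (by Lemma~\ref{fps invertible}) — is exactly the argument the paper gives, so that portion matches.

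Where you go beyond the paper is in addressing injectivity, which the statement claims but the paper's proof does not actually verify: the paper stops after invoking the universal property. Your injectivity argument is correct. For $\Phi_1$, both $R(S^1)_{g,\New}$ and $R(S^1)_g$ are localizations of the domain $R(S^1)\cong\bb{Z}[q,q^{-1}]$ at multiplicative sets avoiding $0$, so both embed into the fraction field $\bb{Q}(q)$ and $\Phi_1$ is the inclusion of one subring into another. For $\Phi_2$, however, you do not actually need to establish that $R(S^1)_\pos$ is an integral domain (though it is, as a subring of $\bb{Z}((q))$, and your leading-term argument is fine). Once you know $\Phi_2(r/s)=r\cdot s^{-1}$ with $s^{-1}$ a unit in $R(S^1)_\pos$, vanishing of $\Phi_2(r/s)$ forces $r=0$ in $R(S^1)_\pos$ simply by multiplying by $s$; injectivity of $R(S^1)\hookrightarrow R(S^1)_\pos$ (two Laurent polynomials with the same coefficients are equal) then gives $r=0$ in $R(S^1)$, hence $r/s=0$. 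So the domain property of the target is a superfluous detour, but the proof is sound and more complete than the one in the paper.
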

\begin{proof}
Thanks to the universal property of localization, it suffices to see that each element of $S_g$ is mapped to an invertible element of $R(S^1)_{g}$ and $R(S^1)_\pos$. The former is obvious. The latter is clear from Lemma \ref{fps invertible}.
\end{proof}

The following is clear from the same construction of Lemma \ref{Loc thm fps} in Appendix.

\begin{lem}
$[i^*]$ is invertible after tensoring with $R({S^1})_{g,\New}$.
\end{lem}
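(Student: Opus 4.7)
The plan is to mirror the construction of the inverse Euler class $[e^{-1}_{U_{2\vep}}]$ from Section \ref{Non-compact manifolds}, replacing the ring $R(S^1)_{\pos}$ by $R(S^1)_{g,\New}$, and to reduce the statement to the algebraic content of the referenced Lemma \ref{Loc thm fps} in the appendix.

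First, I would pass to a tubular-neighborhood model. Since $U$ and $V$ are relatively compact $S^1$-invariant neighborhoods of the compact fixed-point set $M$, I can insert an equivariant tubular neighborhood $U_{2\vep}$ of $M$ with $\overline{V} \subseteq U_{2\vep} \subseteq U$ and $U_\vep \subseteq V$, where $U_\vep$, $U_{2\vep}$ are as in Section \ref{Non-compact manifolds}. The inclusions $U_{2\vep} \hookrightarrow U$ and $U_\vep \hookrightarrow V$ are $S^1$-equivariant homotopy equivalences, and hence induce $KK_{S^1}$-equivalences of the corresponding function algebras. It therefore suffices to prove invertibility of the restriction $C_0(U_{2\vep}) \to C(\overline{U_\vep})$ in $KK_{S^1}(-,-) \otimes_{R(S^1)} R(S^1)_{g,\New}$.

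Second, I would apply the Thom isomorphism to reduce the problem to the Euler class. By Lemma \ref{cplx str of normal}, $\nu(M)$ is $S^1$-equivariantly $K$-oriented, so the Thom class $[\tau_{U_{2\vep}}]$ gives a $KK_{S^1}$-equivalence between $C_0(U_{2\vep})$ and $C(M)$. Under this equivalence, the restriction-to-$\overline{U_\vep}$ map becomes (modulo a second Thom equivalence on the $\overline{U_\vep}$ side) multiplication by $[\tau_{U_{2\vep}}] \grotimes [j_*] = [e_{U_{2\vep}}] \in KK_{S^1}(C(M),C(M))$, exactly as in the argument preceding Lemma \ref{lem comp of Eul}. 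So the lemma reduces to showing that the Euler class $[e_{U_{2\vep}}]$ is invertible in $KK_{S^1}(C(M),C(M)) \otimes_{R(S^1)} R(S^1)_{g,\New}$.

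Third, by Lemma \ref{lem comp of Eul}(2) together with Lemma \ref{KH for trivial action}, the Euler class decomposes in $K^0(M) \otimes R(S^1) \cong KK_{S^1}(C(M),C(M))$ as
$$[e_{U_{2\vep}}] = [\ud{\bb{C}}_M] \otimes q^0 + \sum_{n > 0} F_n \otimes q^n,$$
a finite sum whose lowest weight coefficient is the multiplicative unit. This is precisely the algebraic shape that the appendix's Lemma \ref{Loc thm fps} is designed to handle: the scalar obtained from $[e_{U_{2\vep}}]$ by, say, pullback to a point of $M$, lies in the multiplicative set $S_g$, because its lowest coefficient is $\pm 1$ and $g$ acts on $\nu(M)$ without fixed vectors (so its value at $g$, which is the standard fixed-point Euler class factor, is nonzero). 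Once this scalar is inverted in $R(S^1)_{g,\New}$, the residual $K^0(M)$-valued correction is absorbed by a finitary algebraic manipulation as in that appendix lemma. The main obstacle compared to the $R(S^1)_{\pos}$-case is exactly this last step: the Neumann-series argument of Lemma \ref{fps invertible} produces an a priori infinite formal power series, whereas elements of the localization $R(S^1)_{g,\New} = S_g^{-1}R(S^1)$ must be expressed as fractions of Laurent polynomials, so one needs the more delicate reduction of Lemma \ref{Loc thm fps} rather than a direct geometric series.
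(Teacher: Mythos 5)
Your proposal does not follow the paper's approach, and the first step contains a genuine gap that the rest of the argument inherits. The paper's proof is a one-line reference to Lemma \ref{Loc thm fps}: one reruns the Segal-localization argument via the slice theorem. Concretely, on the compact set $K$ of non-fixed points one builds the bundle $E=\bigotimes_i E_{x_i}$ together with the odd automorphism $h$ so that $[(E,h)]=0$ in $K^0_{S^1}(K)$, while the class of $E$ in $R(S^1)$ is $\prod_i(q^0+q^{l_in_{x_i}})$. For the $R(S^1)_{g,\New}$-version, the only new point to verify is that this class lies in the multiplicative set $S_g$: its lowest coefficient is $1$, and its value at the topological generator $g$ is $\prod_i\bigl(1+g^{l_in_{x_i}}\bigr)\neq 0$ because $g^k\neq -1$ for all $k\neq 0$. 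Hence $KK_{S^1}(C(K),C(K))_{g,\New}=0$, and the excision argument of \cite{HW} gives the invertibility of $[i^*]$. Your proposal does not engage with the non-fixed part of $U\setminus V$ at all, which is exactly where the content of the lemma lies.

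The concrete gap is the very first reduction: "insert an equivariant tubular neighborhood $U_{2\vep}$ of $M$ with $\overline{V}\subseteq U_{2\vep}\subseteq U$." Here $V$ is an arbitrary relatively compact invariant neighborhood of the fixed-point set, so $\overline{V}$ may contain points far from $M$; a tubular neighborhood $U_{2\vep}$ (with $2\vep$ below the injectivity radius of the normal exponential) cannot contain it. Such a reduction, if it were available, would make the localization theorem unnecessary; it is precisely the non-fixed portion of $\overline{V}$ and $U$ that has to be annihilated by localizing. There is a secondary misidentification in your second step: the restriction $C_0(U_{2\vep})\to C(\overline{U_\vep})$ between two tubular neighborhoods of $M$ is already a $KK_{S^1}$-equivalence without any localization, since both deformation retract onto $M$; the Euler class only enters when one restricts further to the zero section $M$ itself via $j^*$, as in the discussion preceding Lemma \ref{lem comp of Eul}. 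Invertibility of the Euler class in $R(S^1)_{g,\New}$ is true and is the content of the index construction in Section \ref{Non-compact manifolds}, but it does not by itself yield invertibility of $[i^*]$ for arbitrary $U,V$; that requires the Segal-localization vanishing on the complement.
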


By replacing $[i^*]_g^{-1}$ with $[i^*]_{g,\New}^{-1}$, we can define a new $g$-index $\ind_{g,\New}:KK_{S^1}(C_0(X),\bb{C})\to R({S^1})_{g,\New}$. With the same argument of \cite{HW}, we can check that $\ind_{g,\loc}$ is independent of the choice of $U$ and $V$. 
In the following proposition, we adopt $U_{2\vep}$ as $U$.



Let us compare three index homomorphisms: $\ind_{S^1}^\pos$, $\ind_g$ and $\ind_{g,\New}$.

\begin{pro}
The following diagram commutes.
$$\xymatrix{
& K_{S^1}(X) \ar_-{\ind_{S^1}^\pos}[dl] \ar|-{\ind_{g,\New}}[d] \ar^{\ind_g}[dr] & \\
R(S^1)_\pos  &
R(S^1)_{g,\New} \ar^-{\Phi_2}[l] \ar_-{\Phi_1}[r] &
R(S^1)_{g}.}$$
\end{pro}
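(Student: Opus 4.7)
The plan is to build a common refinement of the three constructions by making a convenient choice of the auxiliary neighborhoods $U$ and $V$ in the Hochs--Wang picture, and then identifying the step that inverts $i^{*}$ with the composition of the Thom isomorphism and the inverse Euler class constructed in Section~\ref{Non-compact manifolds}.

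First, I would specialize the Hochs--Wang data to $U=U_{2\vep}$ and $V=U_{\vep}$; both are relatively compact $S^{1}$-invariant neighborhoods of $M$ with $\overline{V}\subseteq U$, so their construction is unchanged and gives the same homomorphism $\ind_{g}$. With this choice $\overline{U_{\vep}}$ equivariantly deformation retracts onto $M$ through the normal exponential mapping, so the inclusion $j\colon M\hookrightarrow \overline{U_{\vep}}$ induces an isomorphism $j^{*}\colon KK_{S^{1}}(C(\overline{U_{\vep}}),\bb{C})\xrightarrow{\cong} KK_{S^{1}}(C(M),\bb{C})$. Under this isomorphism the class $[\bb{C}_{\overline{V}}]$ used by Hochs--Wang is identified with $[\ud{\bb{C}}_{M}]$. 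Thus all three homomorphisms start with the zero-extension $[k_{*}]\grotimes-$ and end by pairing with $[\ud{\bb{C}}_{M}]$; the only question is whether the middle step agrees.

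Next, I would show that the inverse of $i^{*}\colon KK_{S^{1}}(C(\overline{U_{\vep}}),\bb{C})\to KK_{S^{1}}(C_{0}(U_{2\vep}),\bb{C})$, once it becomes invertible after tensoring with the appropriate ring of fractions, is given by $[\tau_{U_{2\vep}}]\grotimes [e^{-1}_{U_{2\vep}}]\grotimes -$. The key identity is that
\[
[\tau_{U_{2\vep}}]\grotimes_{C(M)}[j^{*}]=[e_{U_{2\vep}}]\in KK_{S^{1}}(C_{0}(U_{2\vep}),C_{0}(U_{2\vep})),
\]
together with the analogous composition in the other order being multiplication by $[e_{U_{2\vep}}]$ on $KK_{S^{1}}(C(M),\bb{C})$ (this is just the Thom isomorphism restricted to the zero section, i.e., the Euler class). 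Hence after inverting $[e_{U_{2\vep}}]$ in a ring containing $R(S^{1})_{g,\New}$, $R(S^{1})_{g}$, or $R(S^{1})_{\pos}$, respectively, the map $[\tau_{U_{2\vep}}]\grotimes [e^{-1}_{U_{2\vep}}]\grotimes -$ is a two-sided inverse of $[i_{*}]\grotimes -=[j_{*}]\grotimes -$. Since the Euler class $[e_{U_{2\vep}}]$ lies in $S_{g}$ (its leading term is the class of $\ud{\bb{C}}_{M}$, as shown in Lemma~\ref{lem comp of Eul}(2), which is $\pm1$), the inverse element is defined already in $R(S^{1})_{g,\New}$, and its images under $\Phi_{1}$ and $\Phi_{2}$ are the respective inverses in $R(S^{1})_{g}$ and $R(S^{1})_{\pos}$.

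Combining these observations, each of $\ind_{g}$, $\ind_{g,\New}$, and $\ind_{S^{1}}^{\pos}$ factors through the common homomorphism
\[
KK_{S^{1}}(C_{0}(X),\bb{C})\xrightarrow{[k_{*}]\grotimes-}KK_{S^{1}}(C_{0}(U_{2\vep}),\bb{C})\xrightarrow{[\tau_{U_{2\vep}}]\grotimes-}KK_{S^{1}}(C(M),\bb{C})\xrightarrow{[\ud{\bb{C}}_{M}]\grotimes -}R(S^{1})
\]
followed by multiplication by $[e_{U_{2\vep}}]^{-1}$ in the appropriate ring, and this element is the image of the common class in $R(S^{1})_{g,\New}$ under $\Phi_{1}$ (resp.\ $\Phi_{2}$). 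The commutativity of the diagram follows. The main obstacle I anticipate is the bookkeeping in the second step: carefully verifying that the Hochs--Wang inverse of $i^{*}$ after localization really coincides (not merely after a further projection) with $[\tau_{U_{2\vep}}]\grotimes[e^{-1}_{U_{2\vep}}]\grotimes-$, which amounts to checking that both give left and right inverses of the same push-forward map in $KK_{S^{1}}(C(M),\bb{C})\otimes_{R(S^{1})}R(S^{1})_{g,\New}$; this is done by chasing the Thom isomorphism/Euler class identities just described and using uniqueness of inverses in each localized $R(S^{1})$-module.
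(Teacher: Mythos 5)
Your proposal follows the same route as the paper: after specializing the Hochs--Wang neighborhoods to $U=U_{2\vep}$, $V=U_\vep$, one identifies all five index constructions inside a single refined diagram, observing that $([j^*]\grotimes-)^{-1}$ equals $([e^{-1}_{U_{2\vep}}]\grotimes[\tau_{U_{2\vep}}])\grotimes-$ once the Euler class is invertible, and then the maps $\Phi_1,\Phi_2$ carry the $g,\New$-version to the $g$- and $\pos$-versions. The paper's proof is exactly this diagram chase (written out as one large commutative diagram), so there is no genuine difference of method.

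There is, however, one imprecision you should repair. You write that ``the Euler class $[e_{U_{2\vep}}]$ lies in $S_g$, since its leading term is $\pm 1$.'' This is a type error: $[e_{U_{2\vep}}]$ is an element of $K^0_{S^1}(M)\cong K^0(M)\otimes R(S^1)$, while $S_g\subset R(S^1)$. The statement you actually need is that $[e_{U_{2\vep}}]$ becomes invertible in $K^0_{S^1}(M)\otimes_{R(S^1)}R(S^1)_{g,\New}$ (equivalently, $[i^*]$ becomes invertible after tensoring with $R(S^1)_{g,\New}$). In $R(S^1)_\pos$ this is the formal Neumann-series argument of Lemma~\ref{fps invertible}, which you have available because $R(S^1)_\pos$ contains formal power series; but $R(S^1)_{g,\New}$ is a genuine localization and the same series does not converge there. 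To get invertibility over $R(S^1)_{g,\New}$ one must, for instance, split $[e_{U_{2\vep}}]$ into its rank part (an element of $R(S^1)$, which \emph{does} belong to $S_g$ by Lemma~\ref{lem comp of Eul}(2)) plus a summand in the nilpotent ideal $\widetilde{K}^0(M)\otimes R(S^1)$, and then invert the rank part and finish by nilpotence --- or appeal, as the paper does, to the unnumbered lemma stating that $[i^*]$ is invertible after tensoring with $R(S^1)_{g,\New}$, proved by the same method as Lemma~\ref{Loc thm fps} in the appendix. Once that is supplied, your factorization through the $g,\New$-localized groups and subsequent application of $\Phi_1$, $\Phi_2$ is exactly what the paper's commutative diagram records.

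Two smaller slips worth fixing: the order of factors in your key identity should be $[\tau_{U_{2\vep}}]\grotimes_{C_0(U_{2\vep})}[j^*]=[e_{U_{2\vep}}]\in KK_{S^1}(C(M),C(M))$ (not in $KK_{S^1}(C_0(U_{2\vep}),C_0(U_{2\vep}))$); and in the final factorization, the Kasparov product with $[e^{-1}_{U_{2\vep}}]$ must be applied before pairing with $[\ud{\bb{C}}_M]$, since $[e^{-1}_{U_{2\vep}}]$ is not a scalar and does not commute past that pairing.
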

\begin{proof}
It is clear from the following commutative diagram.
{\footnotesize
$$\xymatrix{
&KK_{S^1}(C_0(X),\bb{C}) 
 \ar@{=}[r] 
 \ar_{[k_*]\grotimes-}[d] &
KK_{S^1}(C_0(X),\bb{C})
 \ar^{[k_*]\grotimes-}[d] & & \\
KK_{S^1}(C_0(U),\bb{C}) 
 \ar@{=}[r] 
 \ar_{[\tau_{U}]\grotimes-}[d] &
KK_{S^1}(C_0(U),\bb{C}) 
 \ar@{=}[r] &
KK_{S^1}(C_0(U),\bb{C}) 
 \ar^-{\loc_{g,\New}}[r] &
KK_{S^1}(C_0(U),\bb{C})_{g,\New}
 \ar^{([i^*]_{g,\New})^{-1}\grotimes-}[dd] 
 \ar^-{\id\otimes \Phi_1}[r] &
KK_{S^1}(C_0(U),\bb{C})_{g}
 \ar^{([i^*]_{g})^{-1}\grotimes-}[dd] \\
KK_{S^1}(C(X^g),\bb{C}) 
 \ar_{[e_{U}^{-1}]\grotimes-}[d] & & &\\
KK_{S^1}(C(X^g),\bb{C})_\pos 
 \ar_{[\ud{\bb{C}}_{X^g}]\grotimes-}[d] &
KK_{S^1}(C(X^g),\bb{C}) 
 \ar^{f,\cong}[r] 
 \ar^{[j^*]\grotimes-}[uu]
 \ar^{[\ud{\bb{C}}_{X^g}]\grotimes-}[d] 
 \ar_{\pos}[l] &
KK_{S^1}(C(\overline{V}),\bb{C}) 
 \ar_{[i^*]\grotimes-}[uu] 
 \ar^{[\ud{\bb{C}}_{\overline{V}}]\grotimes-}[d] 
 \ar^-{\loc_{g,\New}}[r] &
KK_{S^1}(C(\overline{V}),\bb{C})_{g,\New}
 \ar^{[\ud{\bb{C}}_{\overline{V}}]\grotimes-}[d]
 \ar^{\id\otimes \Phi_1}[r] 
 \ar_{f^{-1}\otimes \Phi_2}@/_27pt/[lll]&
KK_{S^1}(C(\overline{V}),\bb{C})_{g,\New}
 \ar^{[\ud{\bb{C}}_{\overline{V}}]\grotimes-}[d] \\
R(S^1)_\pos &
R(S^1) 
 \ar@{=}[r] 
 \ar_{\pos}[l]&
R(S^1) 
 \ar^-{\loc_{g,\New}}[r] &
R(S^1)_{g,\New}
 \ar_{\Phi_1}[r]
 \ar^{\Phi_2}@/^27pt/[lll] &
R(S^1)_{g},}$$}
where $f:KK_{S^1}(C(X^g),\bb{C})\to KK_{S^1}(C(\overline{V}),\bb{C})$ is induced by the homotopy equivalent.
\end{proof}

In this sense, our index is almost the same with that of \cite{HW}. However, our framework is more flexible in the point that our $KK$-theory admits infinite-rank vector bundles if it is of ``positive energy''. This flexibility enables us to deal with loop spaces.

\section{An $S^1$-equivariant index theorem for loop spaces}\label{Main section}


We want to do the same thing of the previous section on loop spaces.
The phrase ``fixed-point formula for loop spaces'' reminds us of the Witten genus \cite{Wit}.
Unfortunately, we have not proved that our index realizes it. Including this point, in the final subsection, we will summarize remained problems.

\subsection{Loop spaces}

For a compact Riemannian manifold $M$, the smooth loop space of $M$ is denoted by $C^\infty(S^1,M)$.
In the present paper, we consider a completion of it.
Since the tangent space at $\gamma\in C^\infty(S^1,M)$ can be identified with the section space $C^\infty(S^1,\gamma^*TM)$, we can define a family of Riemannian metrics as follows.
Let $g$ be the Riemannian metric on $M$.

\begin{dfn}
The $L^2_s$-metric on $C^\infty(S^1,M)$ is defined by the inner product
$$\inpr{X}{Y}{s}:=\frac{1}{2\pi}\int_{S^1} g_{\gamma(\theta)}\bra{\bra{\id{-\frac{d^2 }{d\theta^2}}}^sX(\theta),Y(\theta)}d\theta.$$
\end{dfn}

This metric gives a metric space structure on $C^\infty(S^1,M)$.
The completion of it is the main subject of this section.

\begin{dfn}
The completion of $C^\infty(S^1,M)$ with respect to the $L^2_s$-metric is denoted by $LM_{L^2_s}$.
\end{dfn}
\begin{rmk}
For simplicity, we will concentrate on the case when $s=0$, and we denote $LM_{L^2_0}$ by $LM$.

\end{rmk}

In this case, the following has been established.
Let $\ev_\theta:C^\infty(S^1,M)\to M$ be the evaluation map at $\theta\in S^1$ and let $\nabla^{\LC}$ be the Levi-Civita connection on $M$.
A vector field $X$ on $LM$ is said to be ``taking values in smooth vector fields'' if $X(\gamma)\in L^2(S^1,\gamma^*TM)$ is smooth for each $\gamma\in C^\infty(S^1,M)$.

\begin{thm}[{\cite[Lemma 2.1]{MRT}}]
For vector fields $X,Y$ on $LM$ taking values in smooth vector fields and $\gamma\in C^\infty (S^1,M)$, we define a vector field $D_XY$ by
$$D_XY(\gamma,\theta):=(\ev_\theta^*\nabla^{\LC})_{X}\bbra{Y(\gamma,\theta)}.$$
Then, the correspondence $(X,Y)\mapsto D_XY$ is the Levi-Civita connection of $LM$.
\end{thm}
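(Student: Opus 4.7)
The approach is to verify that $D$, as defined, satisfies the two characterizing properties of the Levi-Civita connection—metric compatibility and torsion-freeness—and then invoke the formal uniqueness argument (Koszul formula), which applies verbatim in infinite dimensions once $D$ has been shown to exist as a smooth connection. The preliminary step is to check that $(\gamma,\theta)\mapsto (\ev_\theta^*\nabla^{\LC})_X Y(\gamma,\theta)$ yields a vector field on $LM$ taking values in smooth vector fields; this follows from the smoothness of $\nabla^{\LC}$ and the hypothesis on $X,Y$, and $C^\infty(LM)$-linearity in $X$ together with the Leibniz rule in $Y$ are inherited fiberwise from $\nabla^{\LC}$.

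Metric compatibility with the $L^2_0$-metric is a direct differentiation under the integral sign:
\begin{align*}
X\innpro{Y}{Z}{0}(\gamma)
&=\frac{1}{2\pi}\int_{S^1}X\bbbra{g_{\ev_\theta(\cdot)}\bra{Y(\cdot,\theta),Z(\cdot,\theta)}}(\gamma)\,d\theta\\
&=\frac{1}{2\pi}\int_{S^1}\bbbra{g_{\gamma(\theta)}\bra{D_XY(\gamma,\theta),Z(\gamma,\theta)}+g_{\gamma(\theta)}\bra{Y(\gamma,\theta),D_XZ(\gamma,\theta)}}d\theta,
\end{align*}
where the second equality uses that the pullback connection $\ev_\theta^*\nabla^{\LC}$ is compatible with the pullback metric $\ev_\theta^*g$, a pointwise consequence of metric compatibility for $\nabla^{\LC}$ on $M$.

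The main obstacle will be torsion-freeness, $D_XY-D_YX=[X,Y]$, because the Lie bracket of vector fields on the infinite-dimensional manifold $LM$ has to be identified pointwise in $\theta$ with data coming from $M$. The strategy is to verify the identity first on \emph{evaluation lifts}: for $V\in\Gamma(TM)$, set $\hat V(\gamma,\theta):=V(\gamma(\theta))$. Testing against functions of the form $\gamma\mapsto F(\gamma(\theta_0))$ gives $[\hat V,\hat W]=\widehat{[V,W]}$, while the pushforward identity $(\ev_\theta)_*\hat V(\gamma)=V(\gamma(\theta))$ combined with naturality of the pullback connection yields $D_{\hat V}\hat W=\widehat{\nabla^{\LC}_V W}$; torsion-freeness on lifts is then immediate from that of $\nabla^{\LC}$ on $M$. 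To promote this to arbitrary smooth-valued $X,Y$, I would show that $T(X,Y):=D_XY-D_YX-[X,Y]$ is tensorial in $(X,Y)$ (the Leibniz anomalies of the three terms cancel), whence the identity reduces pointwise at $(\gamma,\theta)$ to the evaluation-lift case by choosing a local frame of $TM$ along $\gamma$. Uniqueness in the fundamental theorem of Riemannian geometry then identifies $D$ with the Levi-Civita connection of $(LM,\innpro{\cdot}{\cdot}{0})$.
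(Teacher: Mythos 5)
The paper does not give a proof of this statement at all: it is quoted verbatim, with attribution, from \cite[Lemma 2.1]{MRT}, and the author immediately moves on to the consequence that the curvature of $LM$ is the fiberwise pullback of the curvature of $M$. So there is no ``paper's own proof'' against which your argument can be compared; I can only assess it on its own merits.

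Your metric-compatibility step is sound: differentiation under the integral sign plus compatibility of the pullback connection with the pullback metric gives the identity directly. The torsion-freeness step, however, has a real gap. After showing that $T(X,Y)=D_XY-D_YX-[X,Y]$ is $C^\infty(LM)$-bilinear, what you actually know is that $T(X,Y)(\gamma)$ depends only on the two sections $X(\gamma),Y(\gamma)\in C^\infty(S^1,\gamma^*TM)=T_\gamma LM$. You then ``reduce pointwise at $(\gamma,\theta)$ to the evaluation-lift case,'' but that reduction is not available from $C^\infty(LM)$-tensoriality alone, and the evaluation lifts $\hat V$ do \emph{not} span $T_\gamma LM$: at a constant loop $\gamma\equiv m$, the lifts give only the $\theta$-constant sections $T_mM\subset C^\infty(S^1,T_mM)$, and for a non-embedded $\gamma$ their span is similarly deficient. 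Writing $X(\gamma,\theta)=\sum_iX^i(\gamma,\theta)e_i(\gamma(\theta))$ in a local frame does not exhibit $X$ as a $C^\infty(LM)$-combination of the $\hat e_i$, because the coefficients $X^i$ depend on $\theta$ and are therefore not functions on $LM$. To make the reduction valid you need to observe additionally that $T$ is $C^\infty(S^1)$-bilinear --- i.e., $T(fX,Y)(\gamma,\theta)=f(\theta)T(X,Y)(\gamma,\theta)$ for $f\in C^\infty(S^1)$, where $(fX)(\gamma,\theta):=f(\theta)X(\gamma,\theta)$ --- which holds because $D_XY(\gamma,\theta)$, $D_YX(\gamma,\theta)$ and $[X,Y](\gamma,\theta)$ each depend on $X$ and $Y$ only through their $\theta$-slice $X(\cdot,\theta),Y(\cdot,\theta)$ near $\gamma$. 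With this extra linearity, $T(X,Y)(\gamma,\theta)$ depends only on $X(\gamma,\theta),Y(\gamma,\theta)\in T_{\gamma(\theta)}M$, and now the evaluation lifts do suffice. Alternatively, and perhaps more simply, one can compute $T$ directly in a coordinate chart of $M$: in coordinates one has $D_XY(\gamma,\theta)=X[Y^k]\partial_k+X^iY^j\Gamma^k_{ij}(\gamma(\theta))\partial_k$, so $T(X,Y)=X^iY^j(\Gamma^k_{ij}-\Gamma^k_{ji})\partial_k=0$ by symmetry of the Christoffel symbols, avoiding the lifting issue entirely.
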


Recall that the curvature of a connection induced by a smooth map from another connection is given by the pullback of the curvature of the original one. Thus, the curvature operator on $LM$ is given by
$$R(X,Y)Z(\gamma,\theta)=R^M(X(\gamma,\theta),Y(\gamma,\theta))Z(\gamma,\theta),$$
where $R^M$ is the curvature tensor of $M$ with respect to the Levi-Civita connection.
In particular, we have the following result.

\begin{pro}
Let $\delta$ be the maximum value of the absolute value of sectional curvatures of $M$.
For each $\gamma\in LM$, an orthonormal two-frame $\{u,v\}$ of $T_\gamma LM$, we have
$$|\inpr{R(u,v)v}{u}{}|\leq \delta.$$
\end{pro}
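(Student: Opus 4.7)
The plan is to reduce the curvature computation on $LM$ to a pointwise estimate on $M$ along the loop, using the pointwise formula
\[
R(X,Y)Z(\gamma,\theta)=R^M\bigl(X(\gamma,\theta),Y(\gamma,\theta)\bigr)Z(\gamma,\theta)
\]
stated immediately above the proposition. This formula is a direct consequence of the Levi-Civita connection on $LM$ being the pullback via evaluation of the Levi-Civita connection on $M$, so the curvature tensor is pulled back pointwise as well.

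First I would unfold the $L^2$ inner product to write
\[
\inpr{R(u,v)v}{u}{}=\frac{1}{2\pi}\int_{S^1}g_{\gamma(\theta)}\bigl(R^M(u(\theta),v(\theta))v(\theta),\,u(\theta)\bigr)\,d\theta.
\]
Next, for each fixed $\theta$ I would apply the standard pointwise consequence of the sectional curvature bound on $M$: for any $a,b\in T_xM$,
\[
\bigl|g_x(R^M(a,b)b,a)\bigr|\leq \delta\bigl(|a|^2|b|^2-g_x(a,b)^2\bigr),
\]
which comes from writing the left-hand side as $K(a,b)\cdot(|a|^2|b|^2-g_x(a,b)^2)$, where $K(a,b)$ is the sectional curvature of the plane spanned by $a,b$ and is bounded by $\delta$ in absolute value. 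Substituting this pointwise bound and pulling the absolute value inside the integral yields
\[
|\inpr{R(u,v)v}{u}{}|\leq \frac{\delta}{2\pi}\int_{S^1}\bigl(|u(\theta)|^2|v(\theta)|^2-g_{\gamma(\theta)}(u(\theta),v(\theta))^2\bigr)\,d\theta.
\]

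The concluding step is to bound the right-hand integral by $2\pi$, and this is the main subtlety I foresee. Orthonormality of $\{u,v\}$ in $T_\gamma LM$ is an $L^2$ condition, whereas the integrand is a pointwise Gram determinant; the two match up cleanly if one takes a pointwise orthonormal representative of the $2$-plane, in which case the integrand is identically $1$. Since the sectional curvature depends only on the $2$-plane and not on the basis used to compute it, I would argue one may reduce to this case — pointwise Gram-Schmidt applied to a frame of sections of $\gamma^*TM$ spanning the plane — and then the estimate $|\inpr{R(u,v)v}{u}{}|\leq \delta$ follows immediately.
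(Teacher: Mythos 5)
Your opening steps coincide with the paper's entire proof: the paper writes down the integral formula
$$\inpr{R(u,v)v}{u}{}=\frac{1}{2\pi}\int_{S^1}g_{\gamma(\theta)}\bigl(R^M(u(\theta),v(\theta))v(\theta),u(\theta)\bigr)\,d\theta$$
and declares the bound ``clear.'' You were right to be suspicious: after the pointwise sectional curvature estimate one is left with
$$|\inpr{R(u,v)v}{u}{}|\leq \frac{\delta}{2\pi}\int_{S^1}\bigl(|u(\theta)|^2|v(\theta)|^2-g_{\gamma(\theta)}(u(\theta),v(\theta))^2\bigr)\,d\theta,$$
and $L^2$-orthonormality of $u,v$ gives \emph{no} control on this pointwise Gram determinant. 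However, the remedy you propose does not close the gap. The basis-independence of sectional curvature concerns the fixed $2$-plane $\mathrm{span}\{u,v\}\subset T_\gamma LM=L^2(S^1,\gamma^*TM)$, not the family of pointwise $2$-planes $\mathrm{span}\{u(\theta),v(\theta)\}\subset T_{\gamma(\theta)}M$. Pointwise Gram--Schmidt replaces $u,v$ by sections $u',v'$ whose coefficients depend on $\theta$, so $u',v'$ span a \emph{different} $2$-plane in $T_\gamma LM$; the substitution is therefore not legitimate, and one cannot reduce to a pointwise orthonormal frame.

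In fact the gap cannot be filled, because the asserted inequality is false for $L^2$-orthonormal frames. Take $M=S^2$ with its round metric, so $\delta=1$; let $\gamma$ be a constant loop; and set $u(\theta)=\sqrt 2\,\cos\theta\cdot e_1$, $v(\theta)=\sqrt 2\,\cos\theta\cdot e_2$ for an orthonormal pair $e_1,e_2\in T_{\gamma}M$. Then $u,v$ are $L^2$-orthonormal, $g(u(\theta),v(\theta))\equiv 0$, and $|u(\theta)|^2|v(\theta)|^2=4\cos^4\theta$. Since all sectional curvatures of $S^2$ equal $1$,
$$\inpr{R(u,v)v}{u}{}=\frac{1}{2\pi}\int_0^{2\pi}4\cos^4\theta\,d\theta=\frac{3}{2}>1=\delta.$$
So the proposition as stated does not hold; the paper's ``it is clear'' glosses over precisely the difficulty you identified, and your instinct to worry about that step was correct even though the Gram--Schmidt fix does not rescue it. Any valid argument would have to impose more than $L^2$-orthonormality (e.g.\ a pointwise normalization of the frame) or restate the conclusion accordingly.
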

\begin{proof}
It is clear from $\inpr{R(u,v)v}{u}{}
=\frac{1}{2\pi}\int_{S^1}g_{\gamma(\theta)}\bra{R^M(u(\theta),v(\theta))v(\theta),u(\theta)}d\theta.$
\end{proof}



\subsection{Ingredients for a loop space version of the index theorem}

In Section \ref{reform of fpt}, we reformulated the fixed-point formula for $\ind_{S^1}^\pos$.
In this subsection, we prepare an infinite-dimensional versions of the ingredients of this theorem.

\subsubsection{$C^*$-algebras of Hilbert manifolds and the zero-extension}


We begin with the definition of the ``function algebras for Hilbert manifolds'', which were introduced in \cite{Yu} as  generalizations of the $C^*$-algebras of infinite-dimensional Hilbert-Hadamard spaces \cite{GWY}.
Since detailed properties are studied in \cite{T5}, we just define it and explain necessary properties.

In this subsection we deal with a Hilbert manifold $\ca{X}$ satisfying the following assumption.


\begin{asm}\label{Katei}
Let $\ca{X}$ be a Hilbert manifold whose all sectional curvatures are bounded above by $\delta$ and the injectivity radius is greater than $2\vep>0$ at each point.
When $\delta>0$, we assume that $\vep<{\pi}/{2\sqrt{\delta}}$ from the beginning by re-taking $\vep$ smaller if necessary.
\end{asm}

\begin{dfn}[{\cite[Definition 5.1]{GWY}}]\label{def field of Clifford algebras}
We consider the space
$$\Pi(\ca{X}):=\prod_{(x,t)\in \ca{X}\times [0,\vep)}\Cl_+(T_x\ca{X}\oplus t\bb{R}),$$
where 
$$t\bb{R}:=\begin{cases}
\bb{R} & (t\neq 0) \\
0 & (t=0).\end{cases}
$$
This is a space of possibly non-continuous Clifford algebra-valued functions.
Then we consider a huge $C^*$-algebra
$$\Pi_b(\ca{X}):=\bbra{s\in \prod(\ca{X})\ \middle|\  \|s(x,t)\|\text{ is bounded.}}$$
equipped with the pointwise algebraic operations (addition, multiplication and the adjoint) and the uniform norm.
\end{dfn}

The following definition is parallel to the $C^*$-algebra for a Hilbert-Hadamard space \cite[Definition 5.14]{GWY}.

\begin{dfn}[\cite{Yu}]
$(1)$ Let $\ca{X}$ be a Hilbert manifold satisfying Assumption \ref{Katei}. 
Let $x_0,x\in \ca{X}$, and suppose that $d(x,x_0)<2\vep$. Then $x_0$ is contained in the image of $\exp_{x}:B_{2\vep}(T_{x}\ca{X})\to \ca{X}$, and hence it is contained in the domain of $\log_x:\exp_x(B_{2\vep}(T_{x}\ca{X}))\to T_{x}\ca{X}$.
The local Clifford operator at $x_0$ is defined by
$$C_{x_0}(x,t):=
(-\log_{x}(x_0),t)\in T_x\ca{X}\oplus t\bb{R},$$
or equivalently $C_{x_0}(x,t)=\bra{(-d\exp_{x_0})_x(\log_{x_0}(x)),t}$, or more intuitively ``$C_{x_0}(x,t)=(\overrightarrow{x_0x},t)$''.

$(2)$ The Bott homomorphism $\beta_{x_0}:\ca{S}_\vep\to \Pi_b(\ca{X})$ centered at $x_0\in \ca{X}$ is defined by the following: For $f\in \ca{S}_\vep$,
$$\beta_{x_0}(f)(x,t):=
\begin{cases}
f(C_{x_0}(x,t)) & (d(x,x_0)<\vep) \\
0 & (d(x,x_0)\geq\vep),
\end{cases}$$
where $f(C_{x_0}(x,t))$ is the functional calculus in the $C^*$-algebra $\Cl_+(T_x\ca{X}\oplus t\bb{R})$.

$(3)$ The $C^*$-algebra $\ca{A(X)}$ is defined by the $C^*$-subalgebra of $\Pi_b(\ca{X})$ generated by the image of the Bott homomorphisms:
$$\ca{A(X)}:=C^*\bra{\bbra{
\beta_{x_0}(f)\ \middle|\  x_0\in\ca{X}, f\in \ca{S}_\vep}}.$$

$(4)$ For a subset $\ca{Y}$ of $\ca{X}$, we define $\ca{A}(\ca{X},\ca{Y})$ by the $C^*$-subalgebra of $\ca{A}(\ca{X})$ generated by $\bbra{\beta_x(f)\, \middle|\, x\in \ca{Y},f\in \ca{S}_\vep}$.
\end{dfn}

It shares several properties with the $C^*$-algebra of \cite{GWY}.





\begin{pro}[{\cite[5.15, 7.1, 7.2]{GWY}, \cite[5.9, 5.10]{T5}}]\label{properties of GWY algebra}
Let $\ca{X}$ be a Hilbert manifold satisfying Assumption \ref{Katei}.


$(1)$ $\ca{A(X)}$ is separable whenever $\ca{X}$ is separable.



$(2)$ The group of all isometries of $\ca{X}$ continuously act on $\ca{A}(\ca{X})$.
\end{pro}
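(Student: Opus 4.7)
The plan is to reduce both assertions to one key continuity lemma: for each fixed $f\in\ca{S}_\vep$, the assignment $x_0\mapsto \beta_{x_0}(f)$ is norm-continuous as a map from $\ca{X}$ to $\ca{A}(\ca{X})$. Part (1) will then follow from a standard countable-density argument, and part (2) from the observation that isometries of $\ca{X}$ intertwine the local Clifford operators.

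For the continuity lemma, I would fix $f\in\ca{S}_\vep$ and $x_0,x_0'\in\ca{X}$ with $d(x_0,x_0')$ small, and compare $\beta_{x_0}(f)(x,t)$ with $\beta_{x_0'}(f)(x,t)$ pointwise in $(x,t)\in\ca{X}\times[0,\vep)$. Inside the overlap of the $\vep$-balls centered at $x_0$ and $x_0'$, both values are continuous-functional-calculus images of the self-adjoint elements $C_{x_0}(x,t),\, C_{x_0'}(x,t)\in \Cl_+(T_x\ca{X}\oplus t\bb{R})$, each of norm at most $\vep$. Since continuous functional calculus is norm-continuous on norm-bounded sets of self-adjoint elements (by polynomial approximation), a uniform bound on $\|C_{x_0}(x,t)-C_{x_0'}(x,t)\|$ in $(x,t)$ suffices. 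That uniform bound amounts to controlling $\|\log_x(x_0)-\log_x(x_0')\|$ in $x$, which is precisely where the sectional-curvature bound in Assumption \ref{Katei} enters through a Toponogov-type comparison, yielding an estimate $\|\log_x(x_0)-\log_x(x_0')\|\le K\cdot d(x_0,x_0')$ with $K$ independent of $x$. On the non-overlap region, exactly one Bott value is zero and the other sits near the support boundary of $f$, so the vanishing of $f$ at $\pm\vep$ combined with its uniform continuity handles that case.

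Granting the lemma, part (1) is short: choose countable dense subsets $D\subset\ca{X}$ and $F\subset\ca{S}_\vep$, and observe that the countable family $\{\beta_d(f):d\in D,\ f\in F\}$ norm-approximates every generator $\beta_{x_0}(f)$ (using the lemma together with the fact that $f\mapsto \beta_{x_0}(f)$ is a contractive $*$-homomorphism), so the $*$-algebra it generates over the Gaussian rationals is a countable dense subset of $\ca{A}(\ca{X})$. For part (2), any isometry $\phi$ satisfies $\phi\circ\exp_{x_0}=\exp_{\phi(x_0)}\circ d\phi_{x_0}$, so $\log_{\phi(x)}\circ\phi=d\phi_x\circ\log_x$ on the relevant domain. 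The linear isometry $d\phi_x\oplus\id$ extends to a $C^*$-isomorphism $\Cl_+(T_x\ca{X}\oplus t\bb{R})\to\Cl_+(T_{\phi(x)}\ca{X}\oplus t\bb{R})$, and bundling these pointwise yields an automorphism $\tilde\phi:\Pi_b(\ca{X})\to\Pi_b(\ca{X})$ with $\tilde\phi(\beta_{x_0}(f))=\beta_{\phi(x_0)}(f)$. Hence $\tilde\phi$ preserves $\ca{A}(\ca{X})$ and $\phi\mapsto\tilde\phi$ defines a group action; point-norm continuity in the pointwise-convergence topology on the isometry group reduces to the lemma, since $\phi_n\to\phi$ pointwise forces $\beta_{\phi_n(x_0)}(f)\to\beta_{\phi(x_0)}(f)$ in norm on generators, and a three-epsilon argument extends this to all of $\ca{A}(\ca{X})$.

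The main obstacle I anticipate is the continuity lemma, and within it the uniform-in-$x$ estimate on $\|\log_x(x_0)-\log_x(x_0')\|$ over the region where both logarithms are defined. This is the point at which Assumption \ref{Katei} (upper bound on sectional curvature, uniform lower bound on injectivity radius) does essential work, and it parallels the Hilbert-Hadamard comparison developed in \cite{GWY} and its extension to bounded-curvature Hilbert manifolds in \cite{Yu}. Once this quantitative stability of the exponential map with respect to base points is in hand, everything downstream is essentially a density argument together with standard continuity properties of $*$-homomorphisms.
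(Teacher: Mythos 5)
The paper does not give its own proof of this proposition — it simply cites \cite[5.15, 7.1, 7.2]{GWY} and \cite[5.9, 5.10]{T5} — so there is no in-paper argument to compare against line by line. That said, your proposal is a reasonable and essentially correct reconstruction of what those references do, and you have correctly isolated the one non-formal ingredient: a uniform-in-$x$ Lipschitz bound for $x_0\mapsto\log_x(x_0)$, which is exactly what the curvature and injectivity-radius hypotheses of Assumption \ref{Katei} are there to supply. The case split in the continuity lemma (overlap of $\vep$-balls controlled by functional calculus plus the Lipschitz bound on $\log_x$; non-overlap controlled by $f\in C_0(-\vep,\vep)$ vanishing at the endpoints, together with $\|C_{x_0}(x,t)\|\ge d(x,x_0)\ge\vep-d(x_0,x_0')$) is correct, and the downstream density argument for (1) and the intertwining identity $\widetilde\phi(\beta_{x_0}(f))=\beta_{\phi(x_0)}(f)$ plus a three-$\vep$ argument for (2) are both sound. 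One small terminological correction: the derivative bound $\|d\log_x\|\le c^{-1}$ with $c=\sin(2\sqrt\delta\,\vep)/(2\sqrt\delta\,\vep)$ comes from the Rauch comparison theorem (which is the tool for \emph{upper} sectional-curvature bounds), not Toponogov (which is the lower-bound counterpart); the Hilbert-Hadamard case of \cite{GWY} happens to also be expressible via CAT(0) triangle comparison, which may be the source of the conflation.
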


\begin{dfn}
Let $\ca{X}$ be a Hilbert manifold satisfying Assumption \ref{Katei} and let $\ca{Y}$ be a subset of $\ca{X}$.
For the natural inclusion $k:\ca{Y}\hookrightarrow\ca{X}$, we define the natural injective $*$-homomorphism $\ca{A}(\ca{X},\ca{Y})\to \ca{A(X)}$, and it is denoted by $\widetilde{k_*}$.
\end{dfn}

The $C^*$-algebra $\ca{A(X,Y)}$ plays a role of ``$\ca{A}$($\vep$-neighborhood of $\ca{Y}$)''.

\subsubsection{A Thom homomorphism-like construction}

We now prove one more functorial property.

\begin{pro}
Let $\ca{X}$ be a Hilbert manifold satisfying Assumption \ref{Katei}. 
Let $M$ be a totally geodesic submanifold of $\ca{X}$.
The inclusion is denoted by $i:M\hookrightarrow \ca{X}$.
Then, there exists a $C^*$-algebra homomorphism 
$\Pi_b(M)\to \Pi_b(\ca{X})$ so that the following diagram commutes for arbitrary $p\in M$:
$$\xymatrix{
& 
\ca{S}_\vep
 \ar^-{\beta_{i(p)}}[rd] 
 \ar_-{\beta_{p}}[ld] & \\
\Pi_b(M) \ar[rr]&& \Pi_b(\ca{X}).}$$
\end{pro}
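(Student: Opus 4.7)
The plan is to construct $\phi:\Pi_b(M)\to\Pi_b(\ca{X})$ fiberwise, using a tubular neighborhood afforded by the totally-geodesic hypothesis, and to verify commutativity of the triangle only at the level of Bott generators.

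First I would set up a tubular neighborhood. Because $M$ is totally geodesic in $\ca{X}$ and Assumption \ref{Katei} controls the injectivity radius, the normal exponential map $\exp^\perp:\nu(M)\to\ca{X}$, restricted to normal vectors of length less than $2\vep$, is a diffeomorphism onto an open neighborhood $N$ of $M$ in $\ca{X}$. Write $\pi:N\to M$ for the nearest-point projection, $v_x:=\log_{\pi(x)}(x)\in\nu_{\pi(x)}(M)$, and $\gamma_x(s):=\exp_{\pi(x)}(sv_x)$ for the normal geodesic from $\pi(x)$ to $x$.

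Next, for each $x\in N$ I would build a $*$-homomorphism $\iota_x:\Cl_+(T_{\pi(x)}M\oplus t\bb{R})\to\Cl_+(T_x\ca{X}\oplus t\bb{R})$. Total geodesicity yields the orthogonal splitting $T_{\pi(x)}\ca{X}=T_{\pi(x)}M\oplus\nu_{\pi(x)}(M)$, and parallel transport $P_{\gamma_x}:T_{\pi(x)}\ca{X}\to T_x\ca{X}$ along $\gamma_x$ is a linear isometry; its restriction to $T_{\pi(x)}M$, extended by the identity on $t\bb{R}$, gives an isometric linear embedding $T_{\pi(x)}M\oplus t\bb{R}\hookrightarrow T_x\ca{X}\oplus t\bb{R}$ that functorially induces $\iota_x$. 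Then set
$$\phi(s)(x,t):=\begin{cases}\iota_x\bbra{s(\pi(x),t)}& x\in N,\\ 0& x\notin N.\end{cases}$$
Since each $\iota_x$ is an isometric $*$-homomorphism, $\|\phi(s)\|_{\infty}\leq\|s\|_{\infty}$ and pointwise considerations show $\phi$ is a $*$-homomorphism into $\Pi_b(\ca{X})$.

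Finally I would verify the triangle by checking it on Bott generators. Since $\iota_x$ intertwines functional calculus, one has $\phi(\beta_p(f))(x,t)=f\bra{\iota_x(C_p(\pi(x),t))}$, so $\phi\circ\beta_p=\beta_{i(p)}$ reduces to the vector-level identity
$$\iota_x\bra{-\log_{\pi(x)}(p),\,t}=\bra{-\log_x(p),\,t}\quad\text{in }T_x\ca{X}\oplus t\bb{R},$$
that is, $P_{\gamma_x}(\log_{\pi(x)}(p))=\log_x(p)$ for $p\in M$ with $d(\pi(x),p)<\vep$. Total geodesicity is crucial here, since the geodesic from $\pi(x)$ to $p$ then lies entirely in $M$ and a Jacobi-field / Gauss-lemma analysis centered at $\pi(x)$ controls how its initial velocity transports along the orthogonal geodesic $\gamma_x$.

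The main obstacle is precisely this geometric identity. In a genuinely curved Riemannian setting, $\log_x(p)$ is not a linear function of $\log_{\pi(x)}(p)$, so the naive linear-isometry-induced $\iota_x$ will not give exact commutativity and the construction must be refined. The natural refinement is to couple the suspension coordinate $t$ with the normal distance by setting $\tilde t:=\sqrt{t^2+\|v_x\|^2}$ and replacing $\iota_x$ by the $*$-homomorphism $\Cl_+(T_{\pi(x)}M\oplus\tilde t\bb{R})\to\Cl_+(T_x\ca{X}\oplus t\bb{R})$ induced by the isometric diagonal embedding $\tilde t\bb{R}\hookrightarrow\nu_{\pi(x)}(M)\oplus t\bb{R}$ sending the generator of norm $\tilde t$ to $(v_x,t)$; in this variant the identity to verify becomes $P_{\gamma_x}(\log_{\pi(x)}(p))+\log_x(\pi(x))=\log_x(p)$, which is the totally-geodesic analogue of the Euclidean Pythagorean identity and is the real geometric content of the proposition. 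Once this refined identity is pinned down from total geodesicity, the remaining verifications (boundedness of $\phi$, the $*$-homomorphism property, and matching of Bott supports after possibly shrinking $N$) are routine.
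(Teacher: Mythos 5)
Your proposal takes a genuinely different route from the paper. The paper's proof of this proposition is short: it pushes forward fiberwise through the isometric embedding $i_*:TM\hookrightarrow T\ca{X}|_M$ (extending by zero off $i(M)$) and then verifies, for $x_0,x\in M$, that $i_*(C_{x_0}(x,t))=C_{i(x_0)}(i(x),t)$ because a totally geodesic embedding intertwines $\exp$ and $\log$. You instead attempt a tubular-neighborhood pushforward, which is more elaborate precisely because you noticed that one must also account for the values of $\phi(\beta_p(f))$ at points $x\notin M$; that is a legitimate concern.

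However, there is a genuine gap in the refinement you propose, and it is located exactly where you defer to ``pinning down'' the geometric content. The identity you isolate,
$$P_{\gamma_x}\bra{\log_{\pi(x)}(p)}+\log_x(\pi(x))=\log_x(p),$$
does \emph{not} follow from total geodesicity; in fact it already fails at the level of norms. Since your $\iota_x$ is constructed from a linear isometric embedding, the vector-level equality $\iota_x\bra{C_p(\pi(x),\tilde t)}=C_{i(p)}(x,t)$ forces $\|C_p(\pi(x),\tilde t)\|=\|C_{i(p)}(x,t)\|$, i.e.\ the Pythagorean equality
$$d(x,p)^2 = d\bra{\pi(x),p}^2 + d\bra{x,\pi(x)}^2,$$
which is a flatness statement rather than a totally-geodesic one. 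For $\ca{X}=S^2$ with $M$ the equator, $x=(\cos\phi,0,\sin\phi)$, $\pi(x)=(1,0,0)$ and $p=(\cos\theta,\sin\theta,0)$, one has $d(\pi(x),p)=\theta$, $d(x,\pi(x))=\phi$, $d(x,p)=\arccos(\cos\phi\cos\theta)$, and already for $\phi=\theta=1/2$ the left side is about $0.473$ while the right side is $0.5$. Adjusting $\tilde t$ to repair the norm would make $\tilde t$ depend on $p$, which is not allowed since $\tilde t$ is supposed to be a function of $(x,t)$ alone. So the ``routine remaining verifications'' never get off the ground: for curved $\ca{X}$ the source vector $C_p(\pi(x),\tilde t)$ and the target vector $C_{i(p)}(x,t)$ do not even have equal norms, hence no isometric $\iota_x$ can intertwine them simultaneously for all nearby $p$, and the flat-model intuition on which your refinement is built does not survive curvature.
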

\begin{proof}
Note that $i_*:TM\to T\ca{X}|_{M}$ is  isometric.
Thus, it extends to an injective fiberwise $*$-homomorphism
$i_*:\Cl_+(TM)\to \Cl_+(T\ca{X}|_{M})$, and hence we can define $i_*:\Pi_b(M)\to \Pi_b(\ca{X})$.
Since $M$ is totally geodesic in $\ca{X}$, we have 
$$i(\exp_{x_0}(v))=\exp_{i(x_0)}(i_*(v)),$$
$$i_*\bbra{\log_{x_0}(x)}=\log_{i(x_0)}(i(x)),$$
$$i_*(C_{x_0}(x,t))=C_{i(x_0)}(i(x),t).$$
Therefore, the above triangle commutes.
\end{proof}

Now it is clear that the following diagram commutes for arbitrary $p\in M$:
$$\xymatrix{
\ca{A}(M) 
 \ar@{^{(}->}[dd] \ar@{.>}[rr]&&
\ca{A(X)} 
 \ar@{^{(}->}[dd] \\
& 
\ca{S}_\vep
 \ar|-{\beta_{i(p)}}[ru] 
 \ar|-{\beta_{i(p)}}[rd] 
 \ar|-{\beta_{p}}[lu] 
 \ar|-{\beta_{p}}[ld] & \\
\Pi_b(M) \ar_{i_*}[rr]&& \Pi_b(\ca{X}).}$$
Thus, the above dotted arrow makes sense.

\begin{cor}\label{cor tot geod and A(X)}
In the same situation of the previous proposition, there exists a $C^*$-algebra homomorphism $\ca{A}(M)\to \ca{A}(\ca{X})$ satisfying that
$$\ca{A}(M)\ni \beta_p(f)\mapsto \beta_{i(p)}(f)\in \ca{A}(\ca{X}).$$
\end{cor}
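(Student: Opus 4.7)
The plan is to leverage the preceding proposition directly: it produces a $C^*$-homomorphism $i_*:\Pi_b(M)\to\Pi_b(\ca{X})$ which sends $\beta_p(f)$ to $\beta_{i(p)}(f)$ for every $p\in M$ and $f\in\ca{S}_\vep$. The corollary is essentially the assertion that this ambient homomorphism restricts well to the $C^*$-subalgebras $\ca{A}(M)\subseteq \Pi_b(M)$ and $\ca{A}(\ca{X})\subseteq \Pi_b(\ca{X})$.

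First, I would recall that by definition $\ca{A}(M)$ is the $C^*$-subalgebra of $\Pi_b(M)$ generated by the set $\bbra{\beta_p(f)\midd p\in M,\ f\in\ca{S}_\vep}$. Since $i_*$ is a $*$-homomorphism (in particular norm-decreasing and thus continuous), it sends this generating set into $\bbra{\beta_{i(p)}(f)\midd p\in M,\ f\in\ca{S}_\vep}\subseteq\ca{A}(\ca{X})$ by the commuting triangle of the previous proposition. Consequently, $i_*$ sends the $*$-algebra generated by the generators of $\ca{A}(M)$ into $\ca{A}(\ca{X})$, and then by continuity it sends the norm closure, i.e. $\ca{A}(M)$ itself, into $\ca{A}(\ca{X})$. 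Thus the restriction
\[
i_*|_{\ca{A}(M)}:\ca{A}(M)\longrightarrow \ca{A}(\ca{X})
\]
is the desired $C^*$-algebra homomorphism, and by construction it satisfies $\beta_p(f)\mapsto\beta_{i(p)}(f)$.

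There is essentially no obstacle: all the geometric work (the fact that totally geodesic submanifolds intertwine the exponential maps and hence the local Clifford operators, which is what makes $i_*\circ\beta_p=\beta_{i(p)}\circ\id$ hold) has already been carried out in the preceding proposition. The only thing to notice is the elementary $C^*$-algebraic fact that if a $*$-homomorphism between ambient $C^*$-algebras sends a generating set of a $C^*$-subalgebra into another $C^*$-subalgebra, it restricts to a $*$-homomorphism between those subalgebras; this follows from continuity of $*$-homomorphisms and the definition of the generated $C^*$-subalgebra as a norm closure.
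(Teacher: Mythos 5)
Your proof is correct and matches the paper's argument: the paper presents the same idea via a commutative diagram and simply remarks that the "dotted arrow makes sense," while you spell out the underlying $C^*$-algebraic step (a $*$-homomorphism sending generators into a $C^*$-subalgebra restricts, by continuity, to the generated subalgebra). Same approach, just with the routine details made explicit.
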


We want to apply this construction to loop spaces.
For this aim, we need the following.

\begin{thm}[{\cite[Chapter II Theorem 5.1]{Kob}}]
For a Riemannian manifold $X$ equipped with an isometric group action of a group $G$, the fixed-point set $X^G$ is totally geodesic.
\end{thm}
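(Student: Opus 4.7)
The plan is to establish two facts: first, that $X^G$ is a smooth submanifold with $T_p X^G = (T_p X)^G$ at each $p \in X^G$; and second, that any geodesic in $X$ whose initial velocity lies in $(T_p X)^G$ remains entirely inside $X^G$. Combined, these two statements say precisely that $X^G$ is totally geodesic.

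For the submanifold structure, I would fix $p \in X^G$ and use the exponential map $\exp_p : T_p X \supset B_r(0) \to X$ as a local coordinate chart. Because every $g \in G$ is an isometry fixing $p$, the identity $\exp_p \circ\, dg_p = g \circ \exp_p$ holds on a neighborhood of $0$ (both sides are geodesics with the same initial data). Hence in these normal coordinates the $G$-action is \emph{linear}, namely the action of the orthogonal representation $dg_p$ on $T_p X$. The fixed set of a continuous action by orthogonal linear maps is the linear subspace $(T_p X)^G$, so $X^G$ coincides locally with $\exp_p\bigl((T_p X)^G \cap B_r(0)\bigr)$. This simultaneously shows that $X^G$ is a smooth submanifold and identifies $T_p X^G$ with $(T_p X)^G$.

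Next, for total geodesy, take $v \in T_p X^G = (T_p X)^G$ and consider the maximal geodesic $\gamma_v$. For any $g \in G$, the curve $g \circ \gamma_v$ is again a geodesic (since $g$ is an isometry) with $(g \circ \gamma_v)(0) = g(p) = p$ and $(g \circ \gamma_v)'(0) = dg_p(v) = v$. By uniqueness of geodesics with given initial data, $g \circ \gamma_v = \gamma_v$, so $\gamma_v(t) \in X^G$ for every $t$ in the domain. Since every tangent vector to $X^G$ at an arbitrary point $q \in X^G$ lies in $(T_q X)^G$ (applying the previous paragraph at $q$), the same argument works at $q$, showing that $X^G$ contains all its tangential geodesics; this is the definition of totally geodesic.

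The only genuine technical point — and thus the step I would expect to be delicate if $G$ is not assumed compact or if the action is only continuous — is the passage from ``the action is continuous and isometric'' to ``$dg_p$ is a well-defined orthogonal representation of $G$ on $T_p X$ making $\exp_p$ equivariant on a full neighborhood''. For a smooth action this is automatic by the chain rule and the naturality of the exponential map; for a merely continuous isometric action one invokes the fact (Myers--Steenrod) that isometries are smooth, after which the linearization and equivariance of $\exp_p$ follow as above. Everything else is soft: uniqueness of geodesics and the fact that the fixed set of a linear action is a linear subspace.
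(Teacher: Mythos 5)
Your proof is correct and is essentially the standard argument; the paper itself does not prove this statement but cites it to Kobayashi (Chapter II, Theorem 5.1), whose proof proceeds by the same route: linearize the action in normal coordinates via $\exp_p \circ dg_p = g \circ \exp_p$, observe that the fixed set of the resulting orthogonal linear action on $T_pX$ is a linear subspace, and use uniqueness of geodesics to show geodesics with fixed initial data stay in $X^G$. Your closing remark about Myers--Steenrod is the right caveat for a merely continuous isometric action.
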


One can easily see that this result holds for an infinite-dimensional Hilbert manifold.

Since the set of constant loops in $LM$ is the fixed-point set with respect to the $S^1$-action, we have the following.

\begin{cor}
$M$ is totally geodesic in $LM$ with respect to arbitrary $S^1$-invariant metric.
\end{cor}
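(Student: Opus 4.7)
The proof is essentially a direct application of the preceding theorem, so the plan is simply to verify that its hypotheses hold for the pair $(LM, M)$ under any $S^1$-invariant metric.

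First, I would identify $M$ (viewed as the set of constant loops) with the fixed-point set of the natural $S^1$-action on $LM$ by rotation, $(e^{i\phi}\cdot \gamma)(\theta):=\gamma(\theta+\phi)$. Indeed, $\gamma$ satisfies $\gamma(\theta+\phi)=\gamma(\theta)$ for all $\phi\in S^1$ and all $\theta$ if and only if $\gamma$ is constant, so $(LM)^{S^1}$ coincides with the image of the embedding of $M$ as constant loops.

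Next, by the very definition of an $S^1$-invariant metric on $LM$, the rotation action is isometric. Hence the hypotheses of the theorem quoted from \cite{Kob} are satisfied, once we grant its extension to the Hilbert setting (which is asserted in the excerpt just above the corollary). Applying that theorem then yields that $(LM)^{S^1}=M$ is totally geodesic in $LM$.

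The only nontrivial step is the passage from the finite-dimensional Kobayashi theorem to the Hilbert-manifold setting. The proof in the finite-dimensional case proceeds by the standard argument: for $x\in X^G$ and $v\in T_xX^G$, the geodesic $\gamma(t)=\exp_x(tv)$ is mapped by any $g\in G$ to the geodesic $\exp_x(tg_*v)=\exp_x(tv)$ (since $g$ fixes $x$ and $v$ and is an isometry), so the uniqueness of geodesics forces $\gamma(t)\in X^G$ for all $t$ in the domain. This argument is entirely local and uses only the existence and uniqueness of geodesics through a point with prescribed initial tangent vector, which both hold for Hilbert manifolds under the injectivity radius and curvature assumptions in force here. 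Therefore the extension is routine, and I would simply invoke it. The main (and only) obstacle worth noting is that one must first fix the identification of $M$ with $(LM)^{S^1}$ and check the $S^1$-invariant metric assumption is in play; after that the corollary is immediate.
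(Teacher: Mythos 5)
Your proposal is correct and follows exactly the paper's route: identify $M$ with the fixed-point set $(LM)^{S^1}$ of the rotation action, note that an $S^1$-invariant metric makes the action isometric, and invoke the (Hilbert-manifold extension of the) Kobayashi theorem on fixed-point sets of isometric group actions. The paper leaves the Hilbert-manifold extension as a remark; your sketch of that extension via uniqueness of geodesics is the standard argument and is consistent with what the paper intends.
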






Now, it is clear that the following makes sense.
Let $\nu(M)$ be the normal bundle of $M$ in $LM$, $\nu(M)_\delta:=\bbra{v\in \nu(M)\midd \|v\|<\delta}$ and $U_\delta:=\exp^\perp(\nu(M)_\delta)$.

\begin{dfn}\label{Thom for LM}
We define a $*$-homomorphism 
$\widetilde{\tau_{U_{2\vep}}}:\ca{A}(M)\to \ca{A}(LM,U_{2\vep})$ by Corollary \ref{cor tot geod and A(X)}. We call it the {\bf Thom homomorphism}.
\end{dfn}



\begin{rmk}

Let us explain the reason why we call it the Thom homomorphism.

In a finite-dimensional setting, we can introduce the following.
Let $X$ be a complete manifold and let $M$ be a totally geodesic submanifold.
Let $U_\vep$ be the $\vep$-neighborhood of $M$ in $X$ and let $\varpi:U_\vep\to M$ be the orthogonal projection.
We can define $\tau_{U_\vep}:\ca{A}(M)\to \ca{A}(U_\vep)$ such that 
$\beta_p(f)\mapsto \beta_{i(p)}(f)$.
We define a fiberwise Clifford operator $C^{\rm fib}_x$ for each $x\in M$ as a local vector filed on $M$ by
$$C^{\rm fib}_x(u,t):=(\exp^\perp_{x*}(\log^\perp_x(u)),t)=(\text{``}\overrightarrow{xu}\text{''},t)\in T_u^{\rm fib}X\oplus t\bb{R}$$
for $u\in U_\vep$ so that $\varpi(u)=x$.
Then, for $f\grotimes h\in \ca{S}_\vep\grotimes Cl_\tau(M)\cong \ca{A}(M)$, $\widetilde{\tau_{U_\vep}}(f\grotimes h)(u,t)$ is given by
$$f(X\grotimes 1+1\grotimes C_{\varpi(u)})(t,u)\grotimes h(\varpi(u))\in
\Cl_+(t\bb{R})\grotimes \Cl_+(T_{u}^{\rm fib}X)\grotimes \Cl_+(T_{\varpi(u)}M).$$
This is because the Clifford operator of $U_\vep$ is given by the sum of the  Clifford operator of $M$ and the fiberwise Clifford operator.

Since the correspondence $f\mapsto f(X\grotimes 1+1\grotimes C_{\varpi(u)})$ induces the Bott periodicity homomorphism of \cite{HKT}, the family version
$$f\grotimes h\mapsto f(X\grotimes 1+1\grotimes C)\grotimes h$$
induces the Thom isomorphism.
Definition \ref{Thom for LM} is obviously an infinite-dimensional version of this construction.
\end{rmk}



\subsubsection{Inverse Euler class}

Since the restriction homomorphism has not been constructed and the bundle $\nu(M)$ is of infinite-rank, we can not follow the same story in order to define the Euler class.
In our case, however, we can directly define the Euler class as follows, thanks to the complex structure of the normal bundle $\nu(M)$.

\begin{lem}
$(1)$ The restriction of the tangent bundle of $LM$ to $M$ is given by
$$TLM|_M=\coprod_{m\in M} L^2(S^1,T_mM).$$
By the Fourier series theory, 
$$L^2(S^1,T_mM)\cong T_mM\oplus \bra{ T_mM\otimes \bigoplus_{k>0}\bbra{\bb{R}\cos(k\theta)\oplus \bb{R}\sin(k\theta)}}.$$
Thus, the normal bundle is given by
$$\nu(M)\cong \coprod_{m\in M} T_mM\otimes \bigoplus_{k>0}\bbra{\bb{R}\cos(k\theta)\oplus \bb{R}\sin(k\theta)}.$$

$(2)$ By the complex structure $J(\cos(k\theta)):=-\sin(k\theta)$ and $J(\sin(k\theta))=\cos(k\theta)$,
$$\nu_\bb{C}(M)\cong
\bigoplus_{k>0}(T_mM\otimes \bb{C})\otimes \bb{C}_k$$
as complex vector bundles,
where $\bb{C}_k$ is the representation space of $S^1$ with weight $k$.
As usual, when we regard $\nu(M)$ as a complex vector bundle, we denote it by $\nu_\bb{C}(M)$.
\end{lem}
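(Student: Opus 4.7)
\medskip

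The plan is to unpack the definitions directly and apply ordinary Fourier theory fiberwise over $M$. Since tangent vectors at a loop $\gamma\in LM$ are $L^2$-sections of $\gamma^*TM$, and for a constant loop $\gamma\equiv m$ the pullback bundle $\gamma^*TM$ is the trivial bundle $S^1\times T_mM\to S^1$, I obtain immediately
$$T_mLM = L^2(S^1,\gamma^*TM) = L^2(S^1,T_mM),$$
which gives the first identification in (1). Running this over $m\in M$ yields $TLM|_M = \coprod_{m\in M}L^2(S^1,T_mM)$.

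Next I would invoke the classical orthogonal Fourier decomposition $L^2(S^1,\bb{R}) = \bb{R}\cdot 1 \oplus \bigoplus_{k>0}\bbra{\bb{R}\cos(k\theta)\oplus \bb{R}\sin(k\theta)}$ and tensor with $T_mM$ to obtain the stated decomposition of $L^2(S^1,T_mM)$. The constant-loop summand $T_mM\otimes \bb{R}\cdot 1$ is precisely $T_mM = (di)_m(T_mM) \subset T_mLM$, since a curve $t\mapsto m(t)$ in $M$ lifts to a curve of constant loops whose tangent is the $\theta$-independent vector field along $\gamma$. Because the rotation action of $S^1$ on $LM$ has fixed-point set exactly $M$, and this decomposition is $S^1$-equivariant and orthogonal, the normal bundle $\nu(M)$ is the orthogonal complement of the constant piece, yielding the stated formula for $\nu(M)$.

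For part (2), I would first verify that $J$ squares to $-\id$ on each two-dimensional summand $\bb{R}\cos(k\theta)\oplus\bb{R}\sin(k\theta)$ (direct computation: $J^2\cos(k\theta)=J(-\sin(k\theta))=-\cos(k\theta)$), so $J$ extends by $T_mM$-linearity to a genuine complex structure on $\nu(M)$. Then I would complexify and diagonalize: the $(+i)$-eigenvector in $\bb{C}\otimes_\bb{R}(\bb{R}\cos(k\theta)\oplus\bb{R}\sin(k\theta))$ is $\cos(k\theta)+i\sin(k\theta)=e^{ik\theta}$, on which the rotation $\theta\mapsto\theta+\phi$ acts by multiplication by $e^{ik\phi}$, i.e.\ by the weight-$k$ character. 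Thus $\bra{\bb{R}\cos(k\theta)\oplus\bb{R}\sin(k\theta),J}\cong \bb{C}_k$ as complex $S^1$-representations, and tensoring with $T_mM$ gives
$$T_mM\otimes_\bb{R}\bra{\bb{R}\cos(k\theta)\oplus\bb{R}\sin(k\theta)} \cong T_mM\otimes_\bb{R}\bb{C}_k \cong (T_mM\otimes_\bb{R}\bb{C})\otimes_\bb{C}\bb{C}_k.$$
Summing over $k>0$ and varying $m\in M$ completes the proof.

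The argument is essentially bookkeeping and presents no serious obstacle. The only delicate point is to confirm compatibility of this $J$ with the complex structure of Lemma \ref{cplx str of normal}, which was defined via $J=d/|d|$ with $d=(d/d\theta)|_{\theta=0}\rho(e^{i\theta})$: applying $d$ to the basis element $\cos(k\theta)$ gives $\pm k\sin(k\theta)$ (the sign depending on whether the $S^1$-action is $\theta\mapsto\theta+\phi$ or $\theta\mapsto\theta-\phi$), so after normalizing by $|d|=k$ we recover the $J$ of part (2) on the weight-$k$ summand, up to the harmless choice of orientation of $S^1$.
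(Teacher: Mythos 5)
Your proof is correct. The paper treats this lemma as an immediate consequence of the definitions and classical Fourier series and supplies no proof at all; your fiberwise decomposition, the identification of the constant modes with $(di)_m(T_mM)$, and the weight computation on $e^{ik\theta}$ (including the careful remark reconciling the explicit $J$ with the $J=d/|d|$ of Lemma \ref{cplx str of normal}) are exactly the details the paper leaves to the reader.
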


Although it is of infinite-rank, it does define an element of $K^0_{S^1}(M)_\pos$.
Moreover, the exterior product of it also makes sense in $K^0_{S^1}(M)_\pos$, and it is invertible.


\begin{pro}
$(1)$ The exterior tensor product of $\nu_{\bb{C}}(M)$,
$$\bigwedge^*\nu_\bb{C}(M)=\bigwedge^*\bigoplus_{k>0}(TM\otimes \bb{C})\otimes  \bb{C}z^k$$
defines an element of  $ KK_{S^1}(C(M),C(M))_\pos$.
The corresponding $KK$-element is denoted by $[e_{U_{2\vep}}]$.

$(2)$ It is invertible.
\end{pro}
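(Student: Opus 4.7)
The plan is to imitate the finite-dimensional Lemma \ref{lem comp of Eul} by decomposing the exterior algebra with respect to the $S^1$-action, and then reducing invertibility to the Neumann series argument of Lemma \ref{fps invertible}. The point is that although $\nu_{\bb{C}}(M)$ has infinite rank, each $S^1$-weight piece of $\bigwedge^*\nu_{\bb{C}}(M)$ is of finite rank, and the full exterior algebra lives naturally in $R(S^1)_{\pos}$ rather than in $R(S^1)$.

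\textbf{Step 1 (weight decomposition).} First I would use the fiberwise identities
\[
\bigwedge^*\nu_{\bb{C}}(M)\cong \bigotimes_{k>0}\bigwedge^*\bra{(TM\otimes\bb{C})\otimes \bb{C}_k}\cong \bigotimes_{k>0}\bigoplus_{0\leq m\leq \dim_{\bb{R}} M}\bra{\bigwedge^m(TM\otimes\bb{C})}\otimes \bb{C}_{km}
\]
and collect the terms of a fixed $S^1$-weight $N\geq 0$. Since only tuples $(m_k)_{k>0}$ satisfying $\sum_k k\, m_k=N$ and $m_k\leq \dim_{\bb{R}} M$ contribute, the weight-$N$ subspace $W_N$ is a finite direct sum of finite-rank vector bundles over $M$. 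In particular $W_0=\ud{\bb{C}}_M$ (all $m_k=0$).

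\textbf{Step 2 (well-definedness).} Using the remark following Lemma \ref{KH for trivial action}, $KK_{S^1}(C(M),C(M))\cong K^0(M)\otimes R(S^1)$, so after applying $\pos$ the formal series $\sum_{N\geq 0}[W_N]\otimes q^N$ is an honest element of $K^0(M)\otimes R(S^1)_{\pos}\cong KK_{S^1}(C(M),C(M))_{\pos}$: each coefficient is a finite-rank bundle, and only non-negative powers of $q$ appear. This defines the class denoted $[e_{U_{2\vep}}]$.

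\textbf{Step 3 (invertibility).} By Step 1, $[e_{U_{2\vep}}]=1+Z$ with $Z=\sum_{N>0}[W_N]\otimes q^N$ having no constant term. Hence the Neumann series $\sum_{l\geq 0}(-Z)^l$ is well-defined in $K^0(M)\otimes R(S^1)_{\pos}$, because for each fixed $N$ only finitely many $l$ contribute to the $q^N$-coefficient, exactly as in Lemma \ref{fps invertible} and Lemma \ref{lem comp of Eul}(3); this series gives the inverse.

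\textbf{Main obstacle.} The only real subtlety is verifying that the formal decomposition into $S^1$-weight spaces is genuinely compatible with the $\pos$-construction on the $KK$-group, i.e.\ that the class $\sum_N[W_N]\otimes q^N$ represents a $KK_{S^1}$-cycle and not merely a formal series. Once the identification $KK_{S^1}(C(M),C(M))_{\pos}\cong K^0(M)\otimes R(S^1)_{\pos}$ (Lemma \ref{KH for trivial action} tensored with $R(S^1)_{\pos}$) is in place, everything else is a direct transcription of the finite-dimensional computations.
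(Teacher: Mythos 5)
Your proposal is correct and follows essentially the same path as the paper's proof: decompose $\bigwedge^*\nu_{\bb{C}}(M)$ by $S^1$-weight, observe that each weight-$N$ piece is a finite-rank $\bb{Z}_2$-graded bundle (because only finitely many tuples $(m_k)$ with $\sum_k k\,m_k=N$, $m_k\leq \dim_{\bb{R}}M$ contribute) with $W_0=\ud{\bb{C}}_M$, and then invert via the Neumann series argument of Lemma \ref{fps invertible}, exactly as in Lemma \ref{lem comp of Eul}(3). One small imprecision: the remark after Lemma \ref{KH for trivial action} gives $KK_{S^1}(C(M),C(M))\cong KK(C(M),C(M))\otimes R(S^1)$, not $K^0(M)\otimes R(S^1)$; what your argument actually uses is the ring homomorphism from $K^0(M)\otimes R(S^1)_\pos$ (where the formal series of bundles visibly lives) into $KK_{S^1}(C(M),C(M))_\pos$, which carries an invertible element to an invertible element and is all that is needed.
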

\begin{proof}
When we work on $K$-theory, we denote the complexification of the tangent bundle $TM\otimes \bb{C}$ by $T$.

$(1)$ We compute the exterior product.
\begin{align*}
\bigwedge^*\sum_{k>0} T \otimes  q^k 
&=\prod_{k>0}\bigwedge^* T \otimes  q^k \\
&=\prod_{k>0}\Bigl\{
\ud{\bb{C}}_M\otimes  q^0-
 T \otimes  q^k+
\wedge^2 T \otimes  q^{2k}-
\wedge^3 T \otimes  q^{3k} \Bigr. \\
&\ \ \ \ \ \ \Bigl.+\cdots+(-1)^{\dim(M)}
\wedge^{\dim(M)} T \otimes  q^{{\dim(M)}k} \Bigr\} \\
&=\ud{\bb{C}}_M\otimes  q^0
+\bra{- T }\otimes  q^1
+\bra{\wedge^2 T - T }\otimes  q^2 +\bra{-\wedge^3 T + T\otimes T - T }\otimes  q^3 \\
&\ \ \ \ \ \ +\bra{\wedge^4 T +T\otimes \wedge^2 T+T\otimes T+\wedge^2 T - T }\otimes  q^4
+\cdots.
\end{align*}
It makes sense in $KK_{S^1}(C(M),C(M))_\pos$ since each coefficient of $ q^k$ is a finite-rank vector bundle.

$(2)$ Since the coefficient of $ q^0$ is the rank one trivial bundle, from Lemma \ref{fps invertible}, $[e_{U_{2\vep}}]$ is invertible.
\end{proof}

\begin{dfn}\label{inv Eul for LM}
$(1)$ We define $[e^{-1}_{U_{2\vep}}]\in KK_{S^1}(C(M),C(M))_\pos$ by the inverse of $[e_{U_{2\vep}}]$.

$(2)$ The reformulated version of it is denoted by
$$[\widetilde{e^{-1}_{U_{2\vep}}}]
:=\sigma_{\ca{S}_\vep}([S_M^*]\grotimes [e^{-1}_{U_{2\vep}}]\grotimes [S_M])
\in KK_{S^1}(\ca{A}(M),\ca{A}(M))_\pos.$$
\end{dfn}

\subsection{The index and the fixed-point formula}


Now we can formulate a loop space version of the localized index and the fixed-point formula.

\begin{dfn}
We define a group homomorphism $\widetilde{\ind_{S^1}^\pos}: KK_{S^1}(\ca{A}(LM),\ca{S}_\vep)\to KK_{S^1}(\ca{S}_\vep,\ca{S}_\vep)_\pos$ by the composition
$$KK_{S^1}(\ca{A}(LM),\ca{S}_\vep)
\xrightarrow{[\widetilde{k_*}]\grotimes -}
KK_{S^1}(\ca{A}(LM,U_{2\vep}),\ca{S}_\vep)
\xrightarrow{[\widetilde{\tau_{U_{2\vep}}}]\grotimes -}
KK_{S^1}(\ca{A}(M),\ca{S}_\vep)
\xrightarrow{[\widetilde{e^{-1}_{U_{2\vep}}}]\grotimes-}$$
$$KK_{S^1}(\ca{A}(M),\ca{S}_\vep)_\pos
\xrightarrow{[\widetilde{\ud{\bb{C}}_{M}}]\grotimes-}
KK_{S^1}(\ca{S}_\vep,\ca{S}_\vep)_\pos.$$
We call this homomorphism the {\bf localized index for $LM$}. 
\end{dfn}

\begin{rmk}
In section \ref{reform of fpt}, we reformulated the localized index for $K$-oriented manifolds using the difference line bundle $[L]$ specified by the Spinor bundles.
However, the diagram in Theorem \ref{non-cpt index thm} commutes for arbitrary line bundle $L$. 
{\it For simplicity, we ignored $L$ to formulate a loop space version of the index.}
Although to study what line bundle is appropriate for $L$ in our situation seems to be an interesting problem, we do not study this problem further.
\end{rmk}

In this subsection, by translating each step of the construction of the localized index into the topological language, we deduce a cohomology formula of the localized index for $LM$.
For an $S^1$-equivariant continuous map $\phi:\ca{X}\to \ca{Y}$, we can define a pullback homomorphism 
$$\phi^*:\ca{R}KK_{S^1}(\ca{Y},\scr{C}(\ca{Y}),\scr{C}(\ca{Y}))\to \ca{R}KK_{S^1}(\ca{X},\scr{C}(\ca{X}),\scr{C}(\ca{X})).$$
The corresponding homomorphism
$$\ca{R}KK_{S^1}(\ca{Y},\ca{S}_\vep\grotimes \scr{C}(\ca{Y}),\ca{S}_\vep\grotimes \scr{C}(\ca{Y}))\to 
\ca{R}KK_{S^1}(\ca{X},\ca{S}_\vep\grotimes \scr{C}(\ca{X}),\ca{S}_\vep\grotimes \scr{C}(\ca{X}))$$
is denoted by $\widetilde{\phi^*}$.


\begin{thm}
The following diagram commutes.
$$\xymatrix{
KK_{S^1}(\ca{A}(LM),\ca{S}_\vep) 
 \ar^-{\widetilde{\PD_{LM}}}[r]
 \ar_-{[\widetilde{k_*}]\grotimes-}[d]
 \ar@{}|{(1)}[rd]
  &
\ca{R}KK_{S^1}(LM;\ca{S}_\vep\grotimes \scr{C}(LM),\ca{S}_\vep\grotimes \scr{C}(LM))
 \ar^{\widetilde{k^*}}[dd] 
 \\
KK_{S^1}(\ca{A}(LM,U_{2\vep}),\ca{S}_\vep)
 \ar^-{\widetilde{\PD_{U_\vep}'}}[rd]
 \ar_-{[\widetilde{\tau_{U_{2\vep}}}]\grotimes-}[dd] 
 \ar@{}|{(2)}[rddd]
 &
  \\
 & 
\ca{R}KK_{S^1}(U_\vep;\ca{S}_\vep\grotimes \scr{C}(U_\vep),\ca{S}_\vep\grotimes \scr{C}(U_\vep)) 
 \ar^{\widetilde{i^*}}[d] 
\\
KK_{S^1}(\ca{A}(M),\ca{S}_\vep)
 \ar_-{\widetilde{\PD_{M}}}[r] 
 \ar_-{[\widetilde{e^{-1}_{U_{2\vep}}}]\grotimes-}[d]
 \ar@{}|{(3)}[rd] 
 &
\ca{R}KK_{S^1}(M;\ca{S}_\vep\grotimes C(M),\ca{S}_\vep\grotimes C(M))
 \ar^-{[\widetilde{e^{-1}_{U_{2\vep}}}]\grotimes-}[d]
  \\
KK_{S^1}(\ca{A}(M),\ca{S}_\vep)_\pos
 \ar_-{\widetilde{\PD_{M}}}[r] 
 \ar_-{[\widetilde{\ud{\bb{C}}_{M}}]\grotimes -}[d] 
  \ar@{}|{(4)}[rd] 
&
\ca{R}KK_{S^1}(M;\ca{S}_\vep\grotimes C(M),\ca{S}_\vep\grotimes C(M))_\pos
 \ar^-{\widetilde{t-\ind_{M}}}[d] \\
R(S^1)_\pos
 \ar_{=}[r]&
R(S^1)_\pos .
}$$
We call the composition of the right vertical arrows the {\bf topological localized index for $LM$} and we denote it by $\widetilde{t-\ind_{S^1}^\pos}$.
\end{thm}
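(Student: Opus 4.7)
The plan is to verify commutativity of each of the four labeled squares in turn; the overall strategy parallels the proof of Theorem \ref{non-cpt index thm}, with appropriate modifications to handle the infinite-dimensional nature of $LM$.

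Squares $(3)$ and $(4)$ involve only the finite-dimensional compact manifold $M$ together with the class $[\widetilde{e^{-1}_{U_{2\vep}}}]\in KK_{S^1}(\ca{A}(M),\ca{A}(M))_\pos$. Because the series expansion of $[e^{-1}_{U_{2\vep}}]$ (Definition \ref{inv Eul for LM}) has a finite-rank bundle as the coefficient of each power $q^k$, the verification reduces, coefficient-by-coefficient in the $R(S^1)_\pos$-variable, to the corresponding finite-dimensional statements: namely the commutativity of squares $(vi)$, $(vii)$, and $(viii)$ in the proof of Theorem \ref{non-cpt index thm}. No new content is needed.

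For square $(1)$, the argument is essentially identical to that of square $(i)$ in Theorem \ref{non-cpt index thm}. Using the explicit representative $\{(\ca{A}(LM),\beta_x,0)\}_{x\in LM}$ of $[\widetilde{\Theta_{LM}}]$ and restricting the family to $x\in U_\vep$, the right action of $\ca{A}(LM)$ on the support of each $\beta_x$ factors through the zero extension $\widetilde{k_*}:\ca{A}(LM,U_{2\vep})\hookrightarrow \ca{A}(LM)$. Associativity of the Kasparov product then yields the claimed commutativity.

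Square $(2)$ is where I expect the main obstacle to lie. In the finite-dimensional setting, the analogous square $(iii)$ required a rather delicate bounded-transform and operator-homotopy argument to identify the Thom element composed with the local Bott element of $M$ with the restriction $j^*$ applied to the local Bott element of $U_\vep$. In our loop space reformulation the Thom homomorphism $\widetilde{\tau_{U_{2\vep}}}$ is defined algebraically from the totally geodesic inclusion $i:M\hookrightarrow LM$ by the rule $\beta_p(f)\mapsto \beta_{i(p)}(f)$ (Definition \ref{Thom for LM} and Corollary \ref{cor tot geod and A(X)}), which gives a substantially cleaner description than in the finite-dimensional case. The key computation is that both $[\widetilde{\tau_{U_{2\vep}}}]\grotimes_{\ca{A}(M)}[\widetilde{\Theta_M}]$ and $\widetilde{i^*}([\widetilde{\Theta_{U_\vep}'}])$ admit representatives of the form $\{(\ca{A}(LM),\beta_{i(x)},0)\}_{x\in M}$, since the Bott homomorphism coming from the composition on the left unfolds to the Bott homomorphism of $\ca{A}(LM)$ centered at $i(x)$ by Corollary \ref{cor tot geod and A(X)}. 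What remains is to verify that the two natural $C_0(U_\vep)$-module structures on this representative agree, which should follow because $\beta_{i(x)}(\ca{S}_\vep)$ is supported in the $\vep$-neighborhood of $i(x)$ and the totally geodesic property of $M$ guarantees that the normal exponential coordinates used to define $[\widetilde{\Theta_{U_\vep}'}]$ restrict compatibly to the geodesics of $LM$ emanating from points of $M$.
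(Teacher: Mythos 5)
Your proposal is correct and takes essentially the same route as the paper: square (1) via the explicit representative $\{(\ca{A}(LM),\beta_x,0)\}_{x\in LM}$ and the fact that $\beta_x$ for $x\in U_\vep$ factors through $\ca{A}(LM,U_{2\vep})$, square (2) via the identity $\widetilde{\tau_{U_{2\vep}}}\circ\beta_x^M=\beta_x^{LM}$ coming from Corollary \ref{cor tot geod and A(X)}, and squares (3)--(4) by citing Theorem \ref{non-cpt index thm}. Your anticipation that square (2) hides the main obstacle, in analogy with the bounded-transform and operator-homotopy argument for square $(iii)$ in the finite-dimensional case, is unfounded here: precisely because $\widetilde{\tau_{U_{2\vep}}}$ is defined algebraically on the Bott generators, both sides of (2) evaluate on $[D]=(E,\pi,D)$ to the same family $\{(E,\pi\circ\beta_x^{LM},D)\}_{x\in M}$, so the verification is as immediate as square (1) and the worry about reconciling $C_0(U_\vep)$-module structures is a non-issue.
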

\begin{proof}
$(1)$ Let $[D]=(E,\pi,D)\in KK_{S^1}(\ca{A}(LM),\ca{S}_\vep)$.
Then $[\widetilde{k_*}]\grotimes[D]=(E,\pi\circ \widetilde{k_*},D)$ and
$$\widetilde{\PD_{U_\vep}'}\bra{[\widetilde{k_*}]\grotimes[D]}
=\bbra{(E,\pi\circ \widetilde{k_*}\circ \beta_x,D)}_{x\in U_\vep}.$$
On the other hand, 
$\widetilde{\PD_{LM}}([D])
=\bbra{(E,\pi\circ \beta_x,D)}_{x\in LM}$ and hence
$$\widetilde{k^*}\bra{\widetilde{\PD_{LM}}([D])}
=\bbra{(E,\pi\circ \beta_x,D)}_{x\in U_\vep}.$$
Since $\widetilde{k_*}(\beta_x(f))=\beta_x(f)$, the square $(1)$ commutes.


$(2)$ We can prove it in a similar way of $(1)$. 
The essential point is the definition of $\widetilde{\tau_{U_{2\vep}}}$, that is to say, for $x\in M$ and $f\in \ca{S}_\vep$, we have $\widetilde{\tau_{U_{2\vep}}}(\beta_x^M(f))=\beta_x^{LM}(f)$, where $\beta_x^M$ is the Bott homomorphism for $M$ and $\beta_x^{LM}$ is that for $LM$.


$(3)$, $(4)$ They have been proved in Theorem \ref{non-cpt index thm}.
\end{proof}

\begin{cor}
For $[D]\in KK_{S^1}(\ca{A}(LM),\ca{S}_\vep)$, 
$$\widetilde{\ind_{S^1}^\pos}([D])=
\widetilde{t-\ind_{S^1}^\pos}(\widetilde{\PD_{LM}}([D])).$$
In particular, the localized index of $LM$ has a fixed-point formula.
\end{cor}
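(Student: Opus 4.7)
The plan is to obtain the equality as a diagram chase on the commutative diagram proved in the preceding theorem. By definition of $\widetilde{\ind_{S^1}^\pos}$, composing the four left-hand vertical arrows top to bottom starting from $[D]$ yields exactly $\widetilde{\ind_{S^1}^\pos}([D]) \in KK_{S^1}(\ca{S}_\vep,\ca{S}_\vep)_\pos$. Going instead along the top horizontal arrow $\widetilde{\PD_{LM}}$ and then down the right-hand column computes $\widetilde{t-\ind_{S^1}^\pos}(\widetilde{\PD_{LM}}([D]))$ by definition of the topological localized index for $LM$. The commutativity of the four squares $(1)$--$(4)$ established in the theorem pieces together into commutativity of the large outer rectangle, which is precisely the desired identity.

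For the ``in particular'' assertion, I would explicate the right-hand side as a genuine fixed-point formula. The pullback $\widetilde{k^*}$ restricts the Poincar\'e-dual cycle to the relatively compact neighborhood $U_{2\vep}$ of the fixed-point set $M = LM^{S^1}$, and $\widetilde{i^*}$ further restricts the datum to the compact finite-dimensional manifold $M$. The inverse Euler class $[\widetilde{e^{-1}_{U_{2\vep}}}]$ of Definition \ref{inv Eul for LM} then implements, inside $R(S^1)_\pos$, the infinite-rank normal-bundle correction coming from $\bigwedge^* \nu_{\bb{C}}(M)$, whose invertibility is precisely what $R(S^1)_\pos$ was engineered to guarantee. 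The final arrow $\widetilde{t-\ind_M}$ is the classical Atiyah--Singer topological index on the compact manifold $M$, giving a genuine integration of characteristic classes over the fixed-point set.

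The principal obstacle lies not in this corollary itself but in the four squares of the preceding theorem. The delicate squares are $(1)$ and $(2)$, which reduce to the clean identities $\widetilde{k_*}(\beta_x(f)) = \beta_x(f)$ for $x \in U_{2\vep}$, and $\widetilde{\tau_{U_{2\vep}}}(\beta_x^M(f)) = \beta_x^{LM}(f)$ for $x \in M$. The second identity is exactly the content of Corollary \ref{cor tot geod and A(X)} applied to the inclusion $M \hookrightarrow LM$, which in turn rests on $M$ being totally geodesic in $LM$; this ultimately reduces to the general principle that the fixed-point set of an isometric group action is totally geodesic. Once these identities are in hand, the diagram chase that yields the corollary is essentially routine, and the content of the corollary is the geometric interpretation of $\widetilde{t-\ind_{S^1}^\pos}\circ\widetilde{\PD_{LM}}$ as a finite-dimensional fixed-point formula.
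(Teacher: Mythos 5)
Your proof is correct and takes essentially the same approach as the paper: the paper states this corollary with no separate proof precisely because it is an immediate diagram chase from the commutative diagram in the preceding theorem, identifying the composite of the left column as $\widetilde{\ind_{S^1}^\pos}$ and the composite of the top arrow followed by the right column as $\widetilde{t\text{-}\ind_{S^1}^\pos}\circ\widetilde{\PD_{LM}}$. Your added explication of the ``in particular'' clause and of which squares are the delicate ones accurately reflects the structure of the paper's argument.
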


Although we have not constructed a concrete example of elements of $KK_{S^1}(\ca{A}(LM),\ca{S}_\vep)$, we can still compute the topological localized index of an element of $
\ca{R}KK_{S^1}(LM;\ca{S}_\vep\grotimes \scr{C}(LM),\ca{S}_\vep\grotimes \scr{C}(LM))$.

\begin{ex}
Let us compute the topological localized index homomorphism for $LS^2$.
Note that the following diagram commutes.
$$\begin{CD}
\ca{R}KK_{S^1}(LM;\scr{C}(LM), \scr{C}(LM))
@>\sigma_{\ca{S}_\vep}>>
\ca{R}KK_{S^1}(LM;\ca{S}_\vep\grotimes \scr{C}(LM),\ca{S}_\vep\grotimes \scr{C}(LM)) \\
@Vk^*VV @VV\widetilde{k^*}V \\
\ca{R}KK_{S^1}(U_\vep;\scr{C}(U_\vep),\scr{C}(U_\vep))
@>\sigma_{\ca{S}_\vep}>>
\ca{R}KK_{S^1}(U_\vep;\ca{S}_\vep\grotimes \scr{C}(U_\vep),\ca{S}_\vep\grotimes \scr{C}(U_\vep)) \\
@Vi^*VV @VV\widetilde{i^*}V \\
\ca{R}KK_{S^1}(M;{C}(M),{C}(M))
@>\sigma_{\ca{S}_\vep}>>
\ca{R}KK_{S^1}(M;\ca{S}_\vep\grotimes {C}(M),\ca{S}_\vep\grotimes {C}(M)) \\
@V[e^{-1}_{U_{2\vep}}]\grotimes-VV @VV[\widetilde{e^{-1}_{U_{2\vep}}}]\grotimes-V \\
\ca{R}KK_{S^1}(M;{C}(M),{C}(M))_\pos
@>\sigma_{\ca{S}_\vep}>>
\ca{R}KK_{S^1}(M;\ca{S}_\vep\grotimes {C}(M),\ca{S}_\vep\grotimes {C}(M))_\pos \\
@V{t-\ind_{M}'}VV @VV\widetilde{t-\ind_{M}}V \\
R(S^1)_\pos
@>\sigma_{\ca{S}_\vep}>>KK_{S^1}(\ca{S}_\vep,\ca{S}_\vep).
\end{CD}$$
We compute the composition of the left vertical arrows.
We denote it by $\bra{t-\ind_{S^1}^\pos}'$.
It is obvious that $\widetilde{t-\ind_{S^1}^\pos}(\sigma_{\ca{S}_\vep}(u))=\sigma_{\ca{S}_\vep}(({t-\ind_{S^1}^\pos})'(u))$ for $u\in \ca{R}KK_{S^1}(LM,\scr{C}(LM),\scr{C}(LM))$.

Let $\ca{E}$ be an $S^1$-equivariant vector bundle over $LM$ of finite rank. By definition, the topological localized index of it is given by
\begin{align*}
\int_{S^2}ch\bra{[e^{-1}_{U_{2\vep}}]\otimes \ca{E}|_{S^2}}\td(TS^2)
&=\int_{S^2}ch\bra{[e^{-1}_{U_{2\vep}}]}ch\bra{ \ca{E}|_{S^2}}\td(TS^2).
\end{align*}

Let us compute $\td(TS^2)$.
$TS^2$ has a $Spin^c$-structure indued by a complex structure.
Its first Chern class is given by $c_1(TS^2)=e(TS^2)=2x$, where $x$ is a generator of $H^2(S^2)$ so that $\int_{S^2}x=1$.
Thus, the total Todd class is $\td(TS^2)=
1+\frac{1}{2}c_1(TS^2)=1+x$.

Let us compute $ch([e^{-1}_{U_{2\vep}}])$.
The normal bundle $\nu_{U_{2\vep}\to M}$ is given by
$$\bigoplus_{n>0} T\otimes \bb{C}_n,$$
where $T=TS^2\otimes \bb{C}$.
The corresponding $K$-theory element can be written as $\sum_{n>0}T\otimes  q^n$ under the isomorphism
$\ca{R}KK_{S^1}(M;{C}(M),{C}(M))_\pos
\cong \ca{R}KK(M;{C}(M),{C}(M))\otimes R(S^1)_\pos$.

Since $T$ is orientable, $\wedge^2 T$ is trivial.
Thus, the exterior product is given by
$$\prod_{n>0}(1\otimes  q^0-
T\otimes  q^n+1\otimes  q^{2n}).$$
Its inverse is given by
$$\prod_{n>0}\bra{\sum_{l\geq 0}\bra{
T\otimes  q^n-1\otimes  q^{2n}}^l}.$$ 

We compute the Chern character of it.
Since $T$ is stably trivial, $ch(T)=2$.
Thus, 
the Chern character of the inverse Euler class is
\begin{align*}
\prod_{n>0}\bra{\sum_{l\geq 0}\bra{
2 q^n- q^{2n}}^l}
&=\prod_{n>0}\bra{
\frac{1}{1-2 q^n+ q^{2n}}} 
=\bra{\prod_{n>0}
\frac{1}{1- q^n}}^2 \\
&=\bra{\prod_{n>0}\sum_{l\geq 0} q^{ln}}^2 
=\bra{\sum_{n\geq 0}p(n) q^n}^2 \\
&=\sum_{n\geq 0}\sum_{m=0}^np(m)p(n-m) q^n
,\end{align*}
where $p(n)$ is the partition function.

Under the isomorphism $K^0_{S^1}(S^2)\cong K^0(S^2)\otimes R\bra{S^1}$, $\ca{E}$ can be written as $\sum_aE_a\otimes q^a$.
Therefore, the index is given by
$$\sum_a\bra{\int_{S^2}ch(E_a)(1+x)} \sum_{n\geq 0}\sum_{m=0}^np(m)p(n-m) q^{n+a}.$$ 
In particular, if $\ca{E}$ is the rank one trivial bundle, the index is given by
$$\sum_{n\geq 0}\sum_{m=0}^np(m)p(n-m) q^n.$$
\end{ex}

\begin{ex}
More generally, we can compute the loop space index for a compact Riemann surface. Let $\Sigma$ be the Riemann surface with genus $g$.
Since the tangent bundle of $\Sigma$ is stably trivial, the inverse Euler class is $\sum_{n\geq 0}\sum_{m=0}^np(m)p(n-m) q^n$.
The total Todd class is given by
$$1+\frac{1}{2}c_1(T\Sigma)=1+\frac{1}{2}(2-2g)x=1+(1-g)x,$$
where $x\in H^2(\Sigma)$ is the generator so that $\int_{\Sigma}x=1$.
Thus, the topological localized index of the trivial bundle of $L\Sigma$ is given by
$$(1-g)\sum_{n\geq 0}\sum_{m=0}^np(m)p(n-m) q^n.$$

\end{ex}

\subsection{Remained problems}

On the theme of the present paper, many problems are remained.

Since our construction is inspired by \cite{Wit}, the following is the most important.

\begin{prob}
Prove that the Witten genus is in some sense realized as the image of $\widetilde{\ind_{S^1}^\pos}$ of an appropriate element of $KK_{S^1}(\ca{A}(LM),\ca{S}_\vep)$.
\end{prob}

When we defined the localized index for loop spaces, we ignored the difference line bundle $[L]$.
The difference line bundle $L$ should be the ``difference'' between the ``Spinor bundle of $LM$'' and the tensor product of that of $M$ and the exterior algebra of $\nu_\bb{C}(M)$.
In \cite{Wit}, the $\zeta$-function renormalization appears.
Possibly these are related to each other.

\begin{prob}
$(1)$ Formulate an appropriate $L$.

$(2)$ Are there any relationship between $L$ and the $\zeta$-function renormalization in \cite{Wit}?
\end{prob}

Regarding the Witten rigidity, it is interesting to study the localized index for loop spaces from the viewpoint of the global structure of $LM$.
Since our index is a homomorphism from an invariant of $LM$, $KK_{S^1}(\ca{A}(LM),\ca{S}_\vep)$, the following problem is worth studying.

\begin{prob}
Study properties of $\widetilde{\ind_{S^1}^\pos}$ for $LM$ from the viewpoint of topology of $LM$ or the viewpoint of operator algebra theory of $\ca{A}(LM)$.
\end{prob}

Regarding an ``analysis on loop space'', we need to solve the following problem. It is our next challenge.

\begin{prob}
Construct an explicit element of $KK_{S^1}(\ca{A}(LM),\ca{S}_\vep)$.
\end{prob}

Such an element is nothing but a ``Dirac operator on $LM$''.



\appendix

\section{Our index homomorphism is appropriate}

In order to show that our index homomorphism is appropriate, we prove that we obtain the classical analytic index when we apply the construction of Section \ref{Non-compact manifolds} to a compact manifold.

Let $M$ be a compact Riemannian manifold equipped with an isometric $S^1$-action, $M^{S^1}$ the fixed-point set, $\nu(M^{S^1})$ the normal bundle,
$\nu(M^{S^1})_\delta:=\bbra{v\in \nu(M^{S^1}) \midd \|v\|<\delta}$, and $U_\delta:=\exp^\perp(\nu(M^{S^1})_\delta)$.
The projection of the normal bundle is denoted by $\varpi:\nu(M^{S^1})\to M^{S^1}$.
The situation is summarized as follows:
$$\xymatrix{
M^{S^1}  \ar@{^{(}->}^-{j}[r] \ar@/^18pt/^{i}[rr] & U_\delta \ar@{^{(}->}^-{k}[r] & M. \\
& \nu(M^{S^1} )_\delta \ar@{->>}^-{\varpi}[lu] \ar_-{\exp^{\perp},\cong}[u] &}$$

Then, since the following diagram commutes:
$$\xymatrix{
C(M) \ar^{i^*}[rr] && C(M^{S^1}) \\
& \bb{C} \ar[lu] \ar[ru]&}$$
we have the induced commutative diagram on $K$-homology
$$
\xymatrix{
K^{S^1}_0(M) \ar_-{\ind_{S^1}}[rd] &&
K^{S^1}_0(M^{S^1}) \ar_-{[i^*]\grotimes-}[ll] \ar^-{\ind_{S^1}}[ld] \\
&R(S^1)}$$
Thus, it suffices to find the ``inverse'' of $[i^*]$.

Let us observe $i^*$.
We have a proper inclusion $j:M^{S^1}\hookrightarrow U_\delta$ and an open inclusion $k:U_\delta \hookrightarrow M$.
Thus, we have a commutative diagram
$$\xymatrix{
& C_0(U_\delta) \ar_{k_*}[ld] \ar^{j^*}[rd] & \\
C(M) \ar_{i^*}[rr] && C(M^{S^1}) .}$$
It induces the following commutative diagram:
$$\xymatrix{
& KK_{S^1}(C_0(U_\delta),\bb{C}) & \\
KK_{S^1}(C(M),\bb{C})
 \ar^{[k_*]\grotimes-}[ru]   && 
KK_{S^1}(C(M^{S^1}),\bb{C}).
 \ar_{[j^*]\grotimes-}[lu] 
 \ar^{[i^*]\grotimes-}[ll] }$$

The correspondence $[k_*]\grotimes-$ is isomorphic.

\begin{lem}\label{Loc thm fps}
$k^*$ is isomorphic after tensoring with $R(S^1)_\pos$.
\end{lem}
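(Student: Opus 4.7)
The plan is to exploit the six-term exact sequence in $KK_{S^1}(-,\mathbb{C})$ attached to the short exact sequence of $S^1$-$C^*$-algebras
\[
0 \to C_0(U_\delta) \to C(M) \to C(M \setminus U_\delta) \to 0,
\]
which puts the map $[k_*]\grotimes-$ of interest into the segment
\[
KK_{S^1}^0(C(M\setminus U_\delta),\mathbb{C}) \to KK_{S^1}^0(C(M),\mathbb{C}) \xrightarrow{[k_*]\grotimes-} KK_{S^1}^0(C_0(U_\delta),\mathbb{C}) \to KK_{S^1}^1(C(M\setminus U_\delta),\mathbb{C}).
\]
After tensoring with $R(S^1)_\pos$, exactness will force $[k_*]\grotimes-$ to be an isomorphism provided that (i) the sequence remains exact, and (ii) $KK_{S^1}^i(C(M\setminus U_\delta),\mathbb{C})\otimes_{R(S^1)}R(S^1)_\pos=0$ for $i=0,1$.

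For (i), I would verify that $R(S^1)_\pos$ is flat over $R(S^1)=\mathbb{Z}[q,q^{-1}]$. Note that $R(S^1)_\pos=\mathbb{Z}[[q]][q^{-1}]=\mathbb{Z}[[q]]\otimes_{\mathbb{Z}[q]}\mathbb{Z}[q,q^{-1}]$, exhibiting $R(S^1)_\pos$ as the base change to $R(S^1)$ of the $(q)$-adic completion of the Noetherian ring $\mathbb{Z}[q]$. Completion at a finitely generated ideal of a Noetherian ring is flat, and flatness survives base change, so $R(S^1)_\pos$ is flat over $R(S^1)$ as required.

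The heart of the argument is the following sub-lemma: if $Y$ is a compact $S^1$-space with $Y^{S^1}=\emptyset$, then $KK_{S^1}^*(C(Y),\mathbb{C})\otimes_{R(S^1)}R(S^1)_\pos=0$. Since every isotropy group on $M\setminus U_\delta$ is a proper closed subgroup of $S^1$, hence a finite cyclic group, the slice theorem yields a finite $S^1$-invariant open cover by tubes $S^1\times_H V$ with $H\cong\mathbb{Z}/n\mathbb{Z}$ (possibly $H=\{e\}$). By Morita equivalence, $KK_{S^1}^*(C_0(S^1\times_H V),\mathbb{C})\cong KK_H^*(C_0(V),\mathbb{C})$, which is an $R(H)$-module on which the $R(S^1)$-action factors through the restriction $R(S^1)\to R(H)$. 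The element $q^n-1$ (or $q-1$ when $H=\{e\}$) lies in the kernel of this restriction and has constant coefficient $-1$, so by Lemma \ref{fps invertible} it is invertible in $R(S^1)_\pos$; the tube's $KK$-group is therefore annihilated after tensoring. Mayer-Vietoris, valid thanks to the flatness established above, then propagates the vanishing to all of $M\setminus U_\delta$.

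The main obstacle will be making the equivariant Mayer-Vietoris step precise: one must induct on the size of the tube cover, verifying that connecting maps in each step are $R(S^1)$-linear and analyzing pairwise intersections, which are themselves unions of tubes of the same form. Once the sub-lemma is established, the lemma follows by chasing the six-term sequence, and the appendix's overall claim---that the Section \ref{Non-compact manifolds} construction recovers the classical equivariant index on compact $M$---then follows by combining this with Thom invertibility of $[\tau_{U_{2\vep}}]$ and the $R(S^1)_\pos$-invertibility of $[e_{U_{2\vep}}]$.
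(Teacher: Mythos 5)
Your proposal follows the same high-level strategy as the paper: both of you reduce the statement, via the six-term exact sequence of $0 \to C_0(U_\delta) \to C(M) \to C(K) \to 0$ with $K := M\setminus U_\delta$, to the vanishing of $KK^*_{S^1}(C(K),\bb{C})_\pos$, and both exploit the slice theorem and the fact that all isotropy groups on $K$ are proper (hence finite) subgroups of $S^1$. But the way you establish the vanishing is genuinely different, and there is a trade-off worth noting. The paper shows directly that the ring $KK_{S^1}(C(K),C(K))_\pos$ is zero by constructing (following Furuta's \cite{Fur} and Segal's \cite{Seg} proof of the localization theorem) a single explicit $\bb{Z}_2$-graded trivial $S^1$-bundle $E=\grotimes_i E_{x_i}$ on $K$ together with a fiberwise isomorphism $h=\sum_i \epsilon_1\grotimes\cdots\grotimes\rho_i h_{x_i}\grotimes\cdots$ glued by a partition of unity and a Clifford-type trick; this makes $[(E,h)]=0$ in $K^0_{S^1}(K)$ while its class in $R(S^1)$ becomes a unit in $R(S^1)_\pos$, so the unit of $KK_{S^1}(C(K),C(K))_\pos$ is zero. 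You instead propose Morita equivalence for each tube plus an equivariant Mayer--Vietoris induction. Your version is more standard but pushes the difficulty into precisely the step you flag as the ``main obstacle'': controlling pairwise (and higher) intersections of tubes and the $R(S^1)$-linearity of the connecting maps. The paper's partition-of-unity construction bypasses Mayer--Vietoris entirely and requires no inductive bookkeeping, which is why it is the cleaner route. On the other hand, your explicit verification that $R(S^1)_\pos=\bb{Z}((q))$ is flat over $R(S^1)=\bb{Z}[q,q^{-1}]$ (completion of the Noetherian ring $\bb{Z}[q]$ followed by localization) is a welcome addition: the paper cites \cite[Theorem 2.3]{HW} for the exact-sequence reduction, but the Hochs--Wang setting uses a genuine localization of commutative rings (automatically flat), whereas $R(S^1)_\pos$ is not a localization of $R(S^1)$, so this flatness is a point the paper leaves implicit and your argument correctly fills it in.
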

\begin{proof}
%
Let $K:=M\setminus U_\delta$. It is a compact set equipped with an $S^1$-action.
By a parallel argument of the proof of \cite[Theorem 2.3]{HW}, it suffices to check that $K_0^{S^1}(K)_\pos=0$.
For this aim, since it is a unital module over $KK_{S^1}(C(K),C(K))_\pos$, we will prove that $KK_{S^1}(C(K),C(K))_\pos=0$.
For this aim, we will prove that there exists an invertible element in $R(S^1)_\pos$ so that the corresponding element in $KK_{S^1}(C(K),C(K))_\pos$ is zero.
The following construction is almost the same with \cite[Lemma 4.39]{Fur}.
See also \cite{Seg}

Thanks to the slice theorem, for arbitrary $x\in K$, there exists a submanifold $S_x$ through $x$ and an open neighborhood $U_x$ so that 
$S_x\times_{S^1_x}S^1\xrightarrow{\cong} U_x$, where the diffeomorphism is given by $[(s,g)]\mapsto g\cdot s$ and $S^1_x$ is the stabilizer of $x$.
Then, we can define a projection 
$$\pi_x: U_x\ni [(s,g)]\mapsto g S^1_x\in S^1/S^1_x.$$ 
Since $x$ is not a fixed-point, $S_x^1$ is a proper subgroup of $S^1$ and $S_x^1$ is closed, $\# S^1_x$ is finite.
Let $n_x=\# S^1_x$. Then $\pi_x$ is an $n_x$-fold covering on each orbit.

We prove that $U_x\times \bb{C}_0$ is $S^1$-equivariantly isomorphic to $U_x\times \bb{C}_{ln_x}$ for arbitrary $l\in \bb{Z}$.
We define an odd bundle automorphism 
$h_x:S^1\times (\bb{C}_0\groplus \bb{C}_{ln_x})\to S^1\times (\bb{C}_0\groplus \bb{C}_{ln_x})$ by
$$h_x(y)=\begin{pmatrix}
0 & \pi_x(y)^{-l} \\ \pi_x(y)^l & 0 \end{pmatrix}.$$
One can check that it is $S^1$-equivariant by a direct computation.
We denote $S^1\times (\bb{C}_0\groplus \bb{C}_{ln_x})$ by $E_x$.
This bundle extends to $K$ since it is topologically trivial, although $h_x$ does not.

From now on, we assume $l>0$. 
We can do the same construction for other points in $K$. Since $K$ is compact, there exists a finite set $\{x_1,x_2,\cdots,x_N\}$ so that $\cup_{i=1}^N U_{x_i}=K$. Let $\{\rho_i ^2\}$ be a partition of unity subordinate to $\{U_{x_i}\}$.
Let $E:=\grotimes_{i=1}^NE_{x_i}$ and let $h$ be the odd automorphism defined by
$$h:=\sum_{i=1}^N\epsilon_{1}\grotimes \cdots \grotimes \epsilon_{i-1}
\grotimes \rho_ih_{x_i}\grotimes \id\grotimes \cdots\grotimes \id,$$
where $\vep_1$'s are the grading homomorphism 
$\vep_1=\begin{pmatrix}
1 & 0 \\ 0 & -1 \end{pmatrix}$, and $l_i$ to define $h_{x_i}$ is chosen to be positive.
Thanks to $\rho_i$, it is well-defined.
Since 
$$h^2=\sum \id\grotimes \cdots\grotimes \id\grotimes \rho_i^2h_{x_i}^2\grotimes \id\grotimes \cdots\grotimes \id,$$
the homomorphism $h$ is fiberwisely isomorphic. Therefore, $[(E,h)]=0$ as an element of $K_{S^1}^0(K)$.

On the other hand, since the corresponding element in $R(S^1)_\pos$ is $E=\prod_{i=1}^N( q^0\groplus q^{l_in_{x_i}})=
q^0+(\text{higher terms})$, it is invertible in $R(S^1)_\pos$. 
Since an invertible element vanishes, the ring $KK_{S^1}(C(K),C(K))_\pos$ is trivial.
\end{proof}
\begin{rmk}
This construction works if $K$ is a Lindel\"of space. In order to define $E$, choose $l_i=i$.
Then, the infinite-rank vector bundle $E$ makes sense in $\ca{R}KK{S^1}(K;C_0(K),C_0(K))_\pos$, and it is invertible.
We do not study this generalization in the present paper.\end{rmk}

Thus, the inverse of $[i^*]$ is the composition of $[k_*]$ and the inverse of $[j^*]\in KK_{S^1}(C_0(U_\delta),C(M^{S^1}))$.
Let $[\tau_{U_\delta}]\in KK_{S^1}(C(M^{S^1}),C_0(U_\delta))$ be the Thom class.
In order to find out the inverse of $[j^*]$, we consider the following commutative diagram.
$$\xymatrix{
KK_{S^1}(C_0(U_\delta),\bb{C}) 
\ar^{[\tau_{U_\delta}]\grotimes-}[rd] \\
KK_{S^1}(C(M^{S^1}),\bb{C}) \ar^{[j^*]\grotimes-}[u]  \ar_{[e_{U_\delta}]\grotimes-}[r]& 
KK_{S^1}(C(M^{S^1}),\bb{C}).}$$

Thus, the inverse of $[j^*]$ is the composition of $[\tau_{U_\delta}]$ and the inverse of the Euler class.
Consequently, we have the following result.

\begin{thm}\label{cpt ana index}
The following diagram commutes:
$$\xymatrix{
KK_{S^1}(C(M),\bb{C})
 \ar_-{[\ud{\bb{C}}_M]\grotimes-}[d]
 \ar^-{[k_*]\grotimes-}[r] &
KK_{S^1}(C_0(U_\delta),\bb{C})
 \ar^{[\tau_{U_\delta}]\grotimes-}[r] &
KK_{S^1}(C(M^{S^1}),\bb{C})
 \ar^{[e^{-1}_{U_\delta}]\grotimes-}[r] &
KK_{S^1}(C(M^{S^1}),\bb{C})_\pos
 \ar^-{[\ud{\bb{C}}_{M^{S^1}}]\grotimes-}[d] \\
R(S^1) 
 \ar_-{\pos}[rrr] &&& 
R(S^1)_\pos.}$$
\end{thm}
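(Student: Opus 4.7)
The plan is to reduce the assertion to the trivial identity $[\ud{\bb{C}}_M]\grotimes[i^*]=[\ud{\bb{C}}_{M^{S^1}}]$ by exhibiting every localized class $\pos([D])\in KK_{S^1}(C(M),\bb{C})_\pos$ as $[i^*]\grotimes[D'']$ for a suitable $[D'']\in KK_{S^1}(C(M^{S^1}),\bb{C})_\pos$, and then chasing the diagram.

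First I would record two Kasparov-product identities already implicit in the discussion preceding the theorem: the factorization $[k_*]\grotimes[i^*]=[j^*]$ in $KK_{S^1}(C_0(U_\delta),C(M^{S^1}))$, which is just the $KK$-shadow of the equality of $*$-homomorphisms $i^*\circ k_*=j^*$ (zero-extend to $M$, then restrict to $M^{S^1}$), and the Euler-class identity $[\tau_{U_\delta}]\grotimes[j^*]=[e_{U_\delta}]$, namely the pull-back of the Thom class to the zero section. Combined with Lemma \ref{Loc thm fps} and the invertibility of $[e_{U_\delta}]$ in $KK_{S^1}(C(M^{S^1}),C(M^{S^1}))_\pos$, these identities together imply that all three of $[k_*]\grotimes-$, $[j^*]\grotimes-$, $[i^*]\grotimes-$ become isomorphisms after tensoring with $R(S^1)_\pos$, and that $([i^*]\grotimes-)^{-1}=[e^{-1}_{U_\delta}]\grotimes[\tau_{U_\delta}]\grotimes[k_*]\grotimes-$.

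For $[D]\in KK_{S^1}(C(M),\bb{C})$ I then set $[D'']:=[e^{-1}_{U_\delta}]\grotimes[\tau_{U_\delta}]\grotimes[k_*]\grotimes[D]\in KK_{S^1}(C(M^{S^1}),\bb{C})_\pos$, which by the previous step satisfies $\pos([D])=[i^*]\grotimes[D'']$. The right-then-down composition of the diagram equals $[\ud{\bb{C}}_{M^{S^1}}]\grotimes[D'']$ by construction, and the down-then-right composition equals $\pos([\ud{\bb{C}}_M]\grotimes[D])=[\ud{\bb{C}}_M]\grotimes[i^*]\grotimes[D'']=[\ud{\bb{C}}_{M^{S^1}}]\grotimes[D'']$, where the last equality is the one-line Kasparov-product computation $(C(M),1,0)\grotimes_{C(M)}(C(M^{S^1}),i^*,0)=(C(M^{S^1}),1,0)$.

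The actual content of the theorem therefore reduces to the invertibility of $[i^*]\grotimes-$ after the $R(S^1)_\pos$-localization, which rests on Lemma \ref{Loc thm fps}; the slice-theorem argument there, killing $KK_{S^1}(C(K),C(K))_\pos$ on the compact complement $K=M\setminus U_\delta$, is the only non-formal step. Once it is in hand, the rest is a formal diagram chase together with the verification of the two Kasparov-product identities recalled at the start.
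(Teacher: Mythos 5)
Your proposal is correct and takes essentially the same route as the paper: both reduce to the factorization $i^*\circ k_*=j^*$, the Euler-class identity $[\tau_{U_\delta}]\grotimes[j^*]=[e_{U_\delta}]$, and Lemma~\ref{Loc thm fps} to invert $[i^*]\grotimes-$ after localization, and then appeal to the compatibility of $\ind_{S^1}$ with restriction to the fixed-point set. The paper presents this as a chain of small commutative triangles while you phrase it as an explicit change of variables $\pos([D])=[i^*]\grotimes[D'']$, but the content is identical.
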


The fixed-point formula is obtained in the same way of Section \ref{Fixed-point formula of the localized index}.

\section*{Acknowledgements}
I am supported by JSPS KAKENHI Grant Number 21K20320.

Doman Takata, 
Faculty of Education Mathematical and Natural Sciences,
Niigata University, 
8050 Ikarashi 2-no-cho, Nishi-ku, Niigata, 950-2181, Japan. 

E-mail address: {\tt d.takata@ed.niigata-u.ac.jp}

\end{document}